\documentclass[11pt,oneside]{article}
\usepackage[square, numbers]{natbib}
\usepackage{graphicx,setspace,import, tikz}
\graphicspath{ {./finalPlots/} }
\usepackage{tocloft}
\usepackage{amsmath, amssymb}
\usepackage{amsthm,xcolor}
\usepackage{mathtools}
\usepackage{amsfonts}
\usepackage{mathrsfs}
\usepackage{hyperref}
\usepackage{enumerate}
\usepackage{bm, bbm, cancel}
\usepackage{hhline}

\providecommand{\keywords}[1]
{
  \small	
  \textbf{\textit{Keywords---}} #1
}

\newcommand{\R}{\mathbb{R}}
\newcommand{\N}{\mathbb{N}}

\newcommand{\T}{\mathbb{T}}

\def\E{\mathbb E}

\def\P{\mathbb P}

\numberwithin{equation}{section}

\newtheorem{theorem}{Theorem}[section] 
\newtheorem{lemma}[theorem]{Lemma}
\newtheorem{corollary}[theorem]{Corollary}

\newtheorem{proposition}[theorem]{Proposition}

\theoremstyle{definition}
\newtheorem{definition}[theorem]{Definition}
\newtheorem{assumption}{Assumption}

\newtheorem{remark}[theorem]{Remark}

\newtheorem{examplex}{Example}
\newenvironment{example}
{\pushQED{\qed}\examplex}
{\popQED\endexamplex}

\title{Banach spaces of continuous paths with finite $p$-th variation}

\vspace{2mm}

\author{
	\textsc{Purba Das}
	\thanks{Department of Mathematics, King's College London, UK (E-mail: \it{purba.das@kcl.ac.uk})}
	\and
	\textsc{Donghan Kim} 
	\thanks{Department of Mathematical Sciences, KAIST, South Korea (E-mail: {\it kimdonghan@kaist.ac.kr})}
    \and
    \textsc{Fang Rui Lim}
    \thanks{Department of Mathematics, University of Michigan, US (E-mail:\it{ruilim@umich.edu})}
}
\date{}

\begin{document}
    \maketitle
	
\begin{abstract}
    \noindent We study pathwise $p$-th variation of continuous paths on a compact interval along a fixed partition sequence. Although the class of continuous paths with finite $p$-th variation is generally not linear, we develop a coefficient-based approach via Faber-Schauder expansions that, for any $p>1$, enables the construction of paths with prescribed $p$-th variation while preserving useful linear structures and H\"older regularity. We first construct continuous paths with linear $p$-th variation from suitable conditions on their Faber-Schauder coefficients. We then prescribe nonlinear $p$-th variation through a multiplicative transformation and show that, whenever nonempty, the class of H\"older continuous paths with a given $p$-th variation is dense in $C([0,1])$. Next, we introduce a transport procedure that turns a Banach subspace of continuous functions into a Banach subspace of paths with explicitly controlled $p$-th variation. We also prove stability of the associated pathwise F\"ollmer-It\^o map on these transported subspaces. Finally, via time-changes, we show that this constructive framework extends from $q$-adic partition sequences to broader classes of dense $q$-refining partition sequences.
\end{abstract}
	
    \smallskip
    
    \keywords{pathwise $p$-th variation, pathwise It\^o theory, Faber-Schauder expansion, H\"older regularity, pathwise roughness, Banach space}
    
    \textit{MSC code --} 26A45;  41A30; 42C40; 46E15; 60G17.   
    \tableofcontents

\section{Introduction}

Since the seminal work of \citet{follmer1981}, pathwise It\^o calculus, a deterministic analogue of the stochastic integration theory of \citet{ito1941}, has provided a powerful framework for defining stochastic integrals and change-of-variable formulas without relying on any underlying probabilistic structure. In this approach, the pathwise quadratic variation, or more generally the pathwise $p$-th variation, of a path along a fixed partition sequence plays a central role in the existence of the pathwise integral.  The $p$-th variation of a continuous path thus provides a deterministic way to quantify the oscillatory behavior of a path without probabilistic assumptions. Subsequent developments have significantly extended the scope of pathwise It\^o-type formulas and clarified their connections to rough path theory and functional It\^o calculus \cite{chiu:causal, ContFournie2013, cont2010, ruhong2022, perkowski2019}, \cite[Chapter 6]{Dupire2026}, \cite[Chapter 5]{FrizHairer}. These developments have also found important applications in mathematical finance \cite{Valkeila2008, Chiu:model-free, Follmer:book, Karatzas:Kim, schied2014, Schied:model-free, Vovk}. 

Despite these advances, a fundamental structural difficulty remains. As observed by \citet{schied2016}, natural classes of paths that act as integrators in F\"ollmer's pathwise integral, such as spaces of paths with finite $p$-th variation along a fixed partition sequence, are typically \textit{not} linear spaces. In particular, the dyadic $p$-th variation space $V^p_{\mathbb{T}}$ (see Definition~\ref{Def: p-th variation}) fails in general to be closed under addition, which poses a fundamental obstacle to approximation arguments, the development of stable integration theories and the formulation of a robust functional-analytic framework for rough paths.

This lack of linear structure raises a natural question: to what extent can one recover useful analytical structure—such as linearity or stability—while retaining precise control of the $p$-th variation along a given partition sequence? This issue is central both for the development of pathwise stochastic calculus, where one seeks sufficiently rich and stable classes of paths with controlled variation in order to define integrals and study their continuity properties, and for approximation problems, where one aims to construct dense families of paths with prescribed roughness.

In a recent paper \cite{das-kim2024}, the generalized $p$-th variation class $\mathcal{X}^p_{\mathbb{T}}$ (see \eqref{def : x p pi space}) was introduced in the course of a systematic study of pathwise roughness for continuous functions, defined through asymptotic boundedness of $p$-th variation along a fixed partition sequence. This space arises naturally when one seeks to characterize roughness properties of continuous paths without assuming the existence of a limiting $p$-th variation, and it admits a convenient Banach space structure. Its focus, however, was on asymptotic control and roughness characterization rather than on the explicit construction of paths with prescribed $p$-th variation or on the internal structure of the class $V^p_{\mathbb{T}}$ itself. By contrast, constructing paths with a prescribed finite $p$-th variation is more delicate, since one must ensure not only asymptotic boundedness but also the existence and precise identification of the limiting variation.

From a modeling and approximation-theoretic perspective, systematic examples of continuous paths with prescribed $p$-th variation remain remarkably scarce, especially beyond the Gaussian or semimartingale setting. Classically, irregular paths are often produced through probabilistic mechanisms, most notably via Volterra-type representations of stochastic processes, which yield trajectories with low regularity, such as fractional Brownian motion with Hurst parameter $H<\frac12$. While such representations have greatly expanded the class of models accessible to stochastic analysis, they rely inherently on probabilistic structure and do not provide a pathwise procedure for transforming a given realization into one with a prescribed variation.

Consequently, many results in pathwise calculus have been formulated abstractly for classes of paths satisfying suitable variation conditions, while comparatively little is known about how rich these classes are or how flexible they are from a constructive viewpoint. To the best of our knowledge, in a non-Gaussian setup, apart from the example introduced by \citet{Misura2019,schied2020fractal}, which yield paths whose $p$-th variation exhibits linear growth, there are no explicit families of continuous paths with nontrivial and controllable $p$-th variation. This lack of concrete examples poses an obstacle to the further development of pathwise calculus, since it remains unclear whether variation-based conditions severely restrict admissible paths or instead allow for a broad range of irregular continuous functions. Addressing this issue requires characterizations of $p$-th variation, as well as systematic construction of such paths and proving that they are abundant from the viewpoint of approximation.

In this manuscript, we develop a constructive framework for producing continuous paths with prescribed $p$-th variation and for identifying linear structures inside the nonlinear $p$-variation space $V^p_{\mathbb{T}}$. Our approach is based on two complementary ideas. The first is to construct explicit reference paths whose $p$-th variation grows linearly in time. The second is to use these reference paths as multiplicative transports: by multiplying them with paths of vanishing $p$-th variation, we generate broad families of continuous functions with explicitly computable $p$-th variation.

This yields a simple yet powerful deterministic procedure for realizing a large class of target variation functions and shows that nontrivial prescribed $p$-th variation can be generated in a robust and flexible manner.

Beyond the construction, we show that any nonempty class of continuous functions with a given prescribed $p$-th variation is dense in $C([0,1])$. Thus, paths with prescribed $p$-th variation form large subclasses of the space of continuous functions. We then use the same construction principle to identify concrete linear structures inside the nonlinear space $V^p_{\mathbb{T}}$. Starting from Banach spaces of continuous functions with vanishing $p$-th variation, we construct transported Banach subspaces of $V^p_{\mathbb{T}}$ by multiplication with a fixed positive reference path of linear $p$-th variation. On these subspaces, the $p$-th variation admits an explicit formula and varies continuously with respect to the transported norm. When $p$ is an even integer, we further prove continuity of the corresponding pathwise F\"ollmer-It\^o map. This provides a functional-analytic setting in which both the variation and the induced pathwise calculus are stable under perturbation.

Although the main results of the paper are formulated first for the dyadic partition sequence, this choice is made primarily for simplicity of exposition. In Section \ref{sec: extention to general partitions}, we show that the constructive framework developed here is not confined to the dyadic setting.

In this way, the multiplicative construction of prescribed $p$-th variation, together with the accompanying density, Banach subspace, and stability results, extends beyond the dyadic framework to a more flexible family of partition sequences. 

\medskip

\noindent\textit{Preview:} This paper is organized as follows. Section~\ref{sec: preliminary} introduces some notations and preliminary results and Section~\ref{sec: results} provides our main results along a dyadic partition sequence. Section~\ref{sec: extention to general partitions} extends the results of Section~\ref{sec: results} to a more general class of partition sequences. Finally, Section \ref{sec: proofs} includes some lengthy proofs.

\bigskip

\section{Faber-Schauder expansions} \label{sec: preliminary}

In this section, we introduce several notations that will be used throughout the paper. We then explain the Faber-Schauder representation of continuous functions, which will be the main tool in developing our results, and state preliminary results on the Faber-Schauder coefficients. 

\subsection{Notations}
Throughout this paper, we shall work with continuous paths in $C([0, 1])$, a space of real-valued continuous functions on $[0, 1]$. Even though we restrict ourselves to the unit interval $[0, 1]$ for simplicity, our results in the paper can be generalized to continuous functions defined on $[0, T]$ for any fixed $T > 0$. For $\alpha \in (0, 1)$, we denote $C^{\alpha}([0, 1])$ the subspace of $\alpha$-H\"older continuous functions, i.e.,
\begin{equation*}
    C^{\alpha}([0, 1]) := \bigg\{ x\in C([0,1]) ~ \bigg| ~ \sup_{\substack{t,s \in [0,1] \\ t\neq s }}\frac{\vert x(t)-x(s) \vert}{|t-s|^{\alpha}} < \infty  \bigg\},
\end{equation*}
and the $\alpha$-H\"older norm of $x \in C^{\alpha}([0, 1])$
\begin{equation}    \label{def.Holder norm}
	\Vert x \Vert_{C^{\alpha}} := |x(0)|+\sup_{\substack{t,s \in [0,1] \\ t\neq s }}\frac{\vert x(t)-x(s) \vert}{|t-s|^{\alpha}}.
\end{equation}
For $p \in [1, \infty]$, we denote by $L^p([0, 1])$ the usual $L^p$-space of measurable functions $x$ on $[0, 1]$ satisfying
\[
    \|x\|_{L^p} := \bigg( \int_0^1 |x(s)|^p ds \bigg)^{\frac1p} < \infty, \qquad \text{or} \qquad \|x\|_\infty := \sup_{s\in[0,1]} |x(s)| < \infty.
\]

On the interval $[0, 1]$, we consider the dyadic partition sequence $\T = (\T^m)_{m \ge 0}$ which consists of the dyadic points $t^m_k := k2^{-m}$
\begin{equation}    \label{def: dyadic partition}
    \T^m := \Big\{ 0, \frac{1}{2^m}, \frac{2}{2^m}, \cdots, 1 \Big\}, \qquad m \ge 0.
\end{equation}
For each nonnegative integer $m \ge 0$, let us denote a set of integers $I_m := \{0, 1, \cdots, 2^m -1\}$.

\begin{definition}  \label{Def: p-th variation}
    For any $x \in C([0, 1])$ and $p \ge 1$, we denote 
    \begin{equation}    \label{def: discrete p-th variation}
    	[x]^{(p)}_{\T^n}(t) := \sum_{j \in I_n} \big \vert x(t^n_{j + 1} \wedge t) - x(t^n_j \wedge t)\big\vert^p, \qquad \forall \, t \in [0, 1],
    \end{equation}
    the \textit{$p$-th variation of $x$ along the $n$-th dyadic partition $\T^n$} for each $n \ge 0$. If the limit of $[x]_{\T^n}^{(p)}(t)$ as $n \to \infty$ exists and is continuous, then we say that \textit{$x$ admits finite $p$-th variation along the dyadic partition sequence $\T$}, and write its limit as
    \begin{equation}	\label{eq : uniform limit of p-th variation}
        [x]^{(p)}_{\T}(t) := \lim_{n \to \infty} [x]_{\T^n}^{(p)}(t), \qquad \forall \, t \in [0, 1].
    \end{equation}
    In such case, the convergence is uniform in $t$ and the limit $t \mapsto [x]^{(p)}_{\T}(t)$ is non-decreasing \cite[Definition 1.1 and Lemma 1.3]{perkowski2019}. We denote by $V^{p}_{\T}$ the space of such functions $x$ admitting finite $p$-th variation along $\T$.
\end{definition}

We note that $V^2_{\T}$ corresponds to the space of (continuous) functions to which we can apply F\"ollmer's pathwise It\^o formula \cite{follmer1981}; recently, \citet{perkowski2019} extended the formula to the class $V^p_{\T}$ for any even integer $p \in 2\N$. On the other hand, Schied \cite[Proposition 2.7]{schied2016} found an example of a pair $x, y \in V^2_{\T}$ but $x+y \notin V^2_{\T}$. This suggests that $V^2_{\T}$ is not a vector (linear) space.

We may consider a strictly larger subclass of $C([0, 1])$ than $V^p_{\T}$; for any $x \in C([0, 1])$ and $p \ge 1$, we define
\begin{equation*}
	\Vert x \Vert^{(p)}_{\T} := |x(0)| + \sup_{n \ge 0} \, \Big([x]_{\T^n}^{(p)} (1)\Big)^{\frac{1}{p}},
\end{equation*}
and consider the subspace of $C([0, 1])$:
\begin{equation}    \label{def : x p pi space}
	\mathcal X^{p}_{\T} := \big\{x \in C([0, 1])\, :  \Vert x \Vert^{(p)}_{\T} < \infty \big\}.
\end{equation}
A recent paper \cite[Proposition 2.5]{das-kim2024} showed that $\Vert \cdot \Vert^{(p)}_{\T}$ is a norm and that $\mathcal{X}^p_{\T}$ is a Banach space under this norm. It is straightforward to check the inclusion $V^p_{\T} \subset \mathcal{X}^p_{\T}$ for any $p \ge 1$, since the existence of the limit $[x]^{(p)}_{\T}(1)$ implies $\sup_{n\ge0}[x]^{(p)}_{\T^n}(1)<\infty$.

The Banach space $\mathcal{X}^p_{\T}$ can be viewed as a linear enlargement of $V^p_{\T}$, tailored to quantify the roughness of a continuous path via its variation index, i.e., the smallest $p$ for which the discrete $p$-th variations $[x]^{(p)}_{\T^n}(t)$ in \eqref{def: discrete p-th variation} remain uniformly bounded in $n$. However, F\"ollmer-type pathwise It\^o formulas do not apply to arbitrary elements of $\mathcal{X}^p_{\T}$. Motivated by this, in Section~\ref{subsec: Banach subspaces} we construct explicit linear subspaces of $V^p_{\T}$ consisting of paths with controlled dyadic $p$-th variation.

We conclude this subsection with the following lemma, which will be used in the subsequent sections.

\begin{lemma}   \label{lem:inclusions}
    For $1\le p<q$, the following statements hold.
    \begin{enumerate}[(i)]
        \item If $x\in V^p_{\T}$, then $[x]^{(q)}_{\T}\equiv0$, and in particular
        $V^p_{\T}\subset V^q_{\T}$.
        \item We have the continuous embedding $\mathcal X^p_{\T}\subset\mathcal X^q_{\T}$.
        \item For $\alpha > 1/p$, if $x \in C^{\alpha}([0, 1])$ then $[x]^{(p)}_{\T} \equiv 0$, and in particular $C^{\alpha}([0, 1]) \subset V^p_{\T}$. 
    \end{enumerate}
\end{lemma}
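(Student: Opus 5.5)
The common thread in all three parts is the elementary interpolation bound for a single increment together with the uniform smallness of dyadic increments. For each $n$, write $\Delta^n_j(t) := x(t^n_{j+1}\wedge t)-x(t^n_j\wedge t)$ and $\omega_n := \max_{j\in I_n}\sup_{t}|\Delta^n_j(t)|$. Because $x$ is uniformly continuous on $[0,1]$, we have $\omega_n \le \sup_{|u-v|\le 2^{-n}}|x(u)-x(v)| \to 0$. With $q>p$, every increment satisfies $|\Delta^n_j(t)|^q \le \omega_n^{q-p}|\Delta^n_j(t)|^p$, and summing over $j\in I_n$ gives
\begin{equation*}
  [x]^{(q)}_{\T^n}(t) \le \omega_n^{\,q-p}\,[x]^{(p)}_{\T^n}(t).
\end{equation*}
Each of (i)--(iii) will follow from this inequality.

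For (i), take $x\in V^p_{\T}$. The sequence $[x]^{(p)}_{\T^n}(t)$ converges, so it is bounded uniformly in $t$ and $n$; indeed $[x]^{(p)}_{\T^n}(t)\le [x]^{(p)}_{\T^n}(1)$, because the increments over the intervals straddling or lying beyond $t$ are dominated termwise, and the values $[x]^{(p)}_{\T^n}(1)$ converge. Since $\omega_n^{q-p}\to0$, the displayed inequality gives $[x]^{(q)}_{\T^n}(t)\to0$ uniformly in $t$. The limit $0$ is continuous, so $x\in V^q_{\T}$ with $[x]^{(q)}_{\T}\equiv0$.

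For (ii), I want a bound independent of the modulus of continuity, so I use the $\ell^p\subset\ell^q$ inequality $\big(\sum_j|a_j|^q\big)^{1/q}\le\big(\sum_j|a_j|^p\big)^{1/p}$. This gives $([x]^{(q)}_{\T^n}(1))^{1/q}\le([x]^{(p)}_{\T^n}(1))^{1/p}$ for every $n$, hence $\|x\|^{(q)}_{\T}\le\|x\|^{(p)}_{\T}$. The inclusion map is therefore linear with norm at most $1$, which is continuous with respect to the Banach structures from \cite[Proposition 2.5]{das-kim2024}.

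For (iii), the Hölder bound gives $|\Delta^n_j(t)|\le \|x\|_{C^\alpha}2^{-n\alpha}$. There are $2^n$ terms, so $[x]^{(p)}_{\T^n}(t)\le \|x\|_{C^\alpha}^p\,2^{n(1-\alpha p)}$, which tends to $0$ uniformly in $t$ because $\alpha p>1$. Hence $x\in V^p_{\T}$ with zero variation.

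The main obstacle is only a technical point in (i): the definition of $V^p_{\T}$ requires pointwise convergence to a continuous limit, so one must confirm that uniform convergence to $0$ holds, or at least pointwise convergence, which suffices. I handle this with the termwise monotonicity bound $[x]^{(p)}_{\T^n}(t)\le[x]^{(p)}_{\T^n}(1)$ described in (i). This bound is valid because for $t^n_j\le t<t^n_{j+1}$ the stopped increment is $|x(t)-x(t^n_j)|$, which is not obviously at most $|x(t^n_{j+1})-x(t^n_j)|$. Should that termwise comparison fail, I would fall back on the bound $|x(t)-x(t^n_j)|\le\omega_n$, which contributes at most $\omega_n^q\to0$. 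With that single term controlled, the full sums are comparable and the argument for (i) goes through.
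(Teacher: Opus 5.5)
Your proposal is correct and follows essentially the same proof as the paper: the bound $[x]^{(q)}_{\T^n}(t)\le\omega_n^{q-p}[x]^{(p)}_{\T^n}(t)$ for (i), monotonicity of $\ell^p$-norms for (ii), and the direct H\"older estimate $[x]^{(p)}_{\T^n}(t)\le\|x\|_{C^\alpha}^p2^{n(1-\alpha p)}$ for (iii). In (iii), bounding the stopped increments directly and keeping the power $p$ on the norm is slightly cleaner than the paper's version.

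One correction to (i): the termwise claim $[x]^{(p)}_{\T^n}(t)\le[x]^{(p)}_{\T^n}(1)$ is false in general because of the straddling interval, and your fallback is also unnecessary. Convergence of $[x]^{(p)}_{\T^n}(t)$ at each fixed $t$ already gives boundedness in $n$. That is all the paper uses, and it is all that pointwise convergence to the continuous limit $0$ requires.
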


\begin{proof}
    The first result follows immediately from the continuity of $x$:
    \[
        [x]^{(q)}_{\T^n}(t) \le \Big( \max_{j \in I^n} \big|x(t^n_{j + 1} \wedge t) - x(t^n_j \wedge t)\big|\Big)^{q-p} \sum_{j \in I^n} \big \vert x(t^n_{j + 1} \wedge t) - x(t^n_j \wedge t)\big\vert^p \xlongrightarrow{n \to \infty} 0,
    \]
    and the second one uses H\"older inequality:  
    \[
        \big([x]^{(q)}_{\T^n}(1)\big)^{1/q}
        = \Big(\sum_{j\in I_n} \big\vert x(t^n_{j + 1}) - x(t^n_j)\big\vert^q \Big)^{1/q} 
        \le \Big(\sum_{j\in I_n} \big\vert x(t^n_{j + 1}) - x(t^n_j)\big\vert^p \Big)^{1/p}
        = \big([x]^{(p)}_{\T^n}(1)\big)^{1/p}.
    \]
    For the last one, we have for any $t \in [0, 1]$
    \[
        [x]^{(p)}_{\T^n}(t) \le \sum_{j\in I_n} \big|x(t^n_{j+1})-x(t^n_j)\big|^p \le \Vert x \Vert_{C^{\alpha}} \sum_{j\in I_n} (2^{-n\alpha})^p = \Vert x \Vert_{C^{\alpha}} 2^{n(1-\alpha p)} \xrightarrow{n\to\infty} 0.
    \]
\end{proof}

\medskip

\subsection{Faber-Schauder representation}

In this subsection, we briefly review the classical Faber-Schauder system, which was studied in the early 1900s by \citet{faber1910} and later generalized by \citet{schauder1927}.

For the following Haar function defined on $[0, 1]$
\begin{equation*}
    \psi(t) :=
    \begin{cases}
        ~1, &\qquad \text{if } t \in [0, \frac{1}{2}),
        \\
        -1, &\qquad \text{if } t \in [\frac{1}{2}, 1),
        \\
        ~0, &\qquad \text{otherwise},
    \end{cases}
\end{equation*}
we consider the Haar basis for each $m \ge 0$ and $k \in I_m$
\begin{equation}\label{Eq:Haar}
    \psi_{m, k}(t) := 2^{\frac{m}{2}}\psi(2^m t-k), \qquad \forall \, t \in [0, 1].
\end{equation}
Note that $\{ \mathbf{1}_{[0,1)} \} \cup \{\psi_{m, k}\}_{m \ge 0, k \in I_m}$ is an orthonormal basis of $L^2([0, 1])$ with respect to the inner product $\langle f, g \rangle = \int_0^1 f(t)g(t)dt$. The Faber-Schauder functions $\{e_{m, k}\}_{m \ge 0, k \in I_m}$ are defined by integrating the Haar functions $\psi_{m, k}$ for each $m \ge 0$ and $k \in I_m$
\begin{equation}    \label{Eq: e_mk}
    e_{m, k}(t) := \int_0^t \psi_{m, k}(s) \, ds, \qquad t \in [0, 1].
\end{equation}

Since the Faber-Schauder system forms a Schauder basis of $C([0,1])$, every $x \in C([0,1])$ admits a unique Faber-Schauder representation
\begin{equation}    \label{Eq: Schauder representation}
    x(t) = x(0) + \big(x(1)-x(0)\big)t+ \sum_{m=0}^{\infty}\sum_{k \in I_m} \theta^x_{m, k} e_{m, k}(t), \qquad \forall \; t \in [0, 1],
\end{equation}
where the real numbers $\{\theta^x_{m, k}\}_{m \ge 0, k \in I_m}$ are called the \textit{Faber-Schauder coefficients of $x$}. The representation \eqref{Eq: Schauder representation} is an example of wavelet expansion of continuous functions; however, a notable advantage of it is that each coefficient admits a closed-form expression in terms of the function values of $x$ at the dyadic points:
\begin{equation}\label{eq:theta_mk}
    \theta^x_{m, k} = 2^{\frac{m}{2}}\bigg(2 x\Big(\frac{2k+1}{2^{m+1}}\Big)-x\Big(\frac{k}{2^m}\Big) -  x\Big(\frac{k+1}{2^{m}}\Big) \bigg), \qquad m \ge 0, ~ k \in I_m.
\end{equation}

\medskip

\subsection{Faber-Schauder coefficients}

The Faber-Schauder coefficients of continuous functions are closely related to their regularity properties. In this subsection, we collect such preliminary results. All of them are taken from the existing literature and are stated without proof.

In 1960, \citet{Ciesielski:isomorphism} gave a characterization of H\"older continuity in terms of the Faber-Schauder coefficients along the dyadic partition sequence. This result was recently generalized to a wider class of partition sequences \cite[Theorem 3.4]{fake_fBM}.

\begin{lemma} [\citet{Ciesielski:isomorphism}] \label{lem: Ciesielski}
    Suppose that $x \in C([0, 1])$ admits the Faber-Schauder representation \eqref{Eq: Schauder representation}. For any $\alpha \in (0, 1)$, we have $x \in C^{\alpha}([0, 1])$ if and only if 
    \begin{equation}    \label{con: Holder}
    \sup_{m \ge 0, \, k \in I_m}\Big( 2^{m(\alpha-\frac{1}{2})}|\theta^x_{m, k}|\Big) < \infty.
    \end{equation}
\end{lemma}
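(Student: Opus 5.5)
We prove the two implications separately, working directly from the closed-form expression \eqref{eq:theta_mk} for one direction and from the expansion \eqref{Eq: Schauder representation} for the other. Throughout we use that $e_{m,k}$ is a tent function supported on $[k2^{-m},(k+1)2^{-m}]$, with height $2^{-m/2-1}$ and slopes $\pm 2^{m/2}$, and that for fixed $m$ the supports of the $e_{m,k}$, $k\in I_m$, have disjoint interiors.

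\emph{Necessity.} Suppose $x\in C^\alpha([0,1])$ with H\"older constant $L$. By \eqref{eq:theta_mk},
\[
\theta^x_{m,k}=2^{m/2}\Big[\big(x(\tfrac{2k+1}{2^{m+1}})-x(\tfrac{k}{2^m})\big)-\big(x(\tfrac{k+1}{2^m})-x(\tfrac{2k+1}{2^{m+1}})\big)\Big].
\]
Each bracketed increment spans an interval of length $2^{-m-1}$, so $|\theta^x_{m,k}|\le 2^{m/2}\cdot 2L\,2^{-(m+1)\alpha}$. This gives $2^{m(\alpha-1/2)}|\theta^x_{m,k}|\le 2^{1-\alpha}L$ uniformly in $m$ and $k$, which is \eqref{con: Holder}.

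\emph{Sufficiency.} Let $C$ denote the supremum in \eqref{con: Holder}, so that $|\theta^x_{m,k}|\le C2^{m(1/2-\alpha)}$. The affine part of \eqref{Eq: Schauder representation} is Lipschitz, so it suffices to treat $f_m:=\sum_{k\in I_m}\theta^x_{m,k}e_{m,k}$.

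Because the supports are disjoint, $f_m$ is piecewise linear with
\[
\|f_m\|_\infty\le C2^{m(1/2-\alpha)}2^{-m/2-1}\lesssim 2^{-m\alpha},\qquad \|f_m'\|_\infty\le C2^{m(1/2-\alpha)}2^{m/2}=C2^{m(1-\alpha)}.
\]
In particular the series converges uniformly, and its sum coincides with $x$ by uniqueness of the representation.

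Now fix $s\neq t$ with $|t-s|\le1$, and choose $N$ with $2^{-N-1}<|t-s|\le 2^{-N}$. Split the sum over levels at $N$.
\begin{enumerate}[(i)]
\item For the low levels $m\le N$, use the derivative bound:
\[
\sum_{m\le N}|f_m(t)-f_m(s)|\le C|t-s|\sum_{m\le N}2^{m(1-\alpha)}\lesssim |t-s|\,2^{N(1-\alpha)}\lesssim|t-s|^\alpha.
\]
This step uses $\alpha<1$, so that the geometric sum is dominated by its last term.
\item For the high levels $m>N$, use the sup bound:
\[
\sum_{m>N}2\|f_m\|_\infty\lesssim\sum_{m>N}2^{-m\alpha}\lesssim 2^{-N\alpha}\lesssim|t-s|^\alpha.
\]
This step uses $\alpha>0$, so that the geometric tail converges.
\end{enumerate}
Adding the two pieces gives $|x(t)-x(s)|\lesssim|t-s|^\alpha$, hence $x\in C^\alpha([0,1])$.

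The only delicate point is this level-splitting estimate, in particular choosing the cutoff $N$ according to $|t-s|$ so that both geometric sums balance. This is exactly where $\alpha\in(0,1)$ is needed. Everything else is routine.
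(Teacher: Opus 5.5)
Your proof is correct. In the necessity direction, the bound $2^{m(\alpha-\frac12)}|\theta^x_{m,k}|\le 2^{1-\alpha}L$ follows directly from the second-difference formula \eqref{eq:theta_mk}. In the sufficiency direction, you split the levels at $2^{-N}\approx|t-s|$: below the cutoff you use the Lipschitz bound $C2^{m(1-\alpha)}$, and above it the sup bound $\lesssim 2^{-m\alpha}$. Both pieces are sound, and $\alpha\in(0,1)$ enters exactly where you say. The paper itself gives no proof of this lemma; it quotes it from Ciesielski's 1960 work. Your argument is the classical proof of that result, so there is no alternative route in the paper to compare against.
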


Next, we recall an equivalent condition on the Faber-Schauder coefficients for a continuous function $x\in C([0,1])$ to belong to the Banach space $\mathcal{X}^p_{\T}$ defined in \eqref{def : x p pi space}. The smallest $p$ such that $x\in \mathcal{X}^p_{\T}$ is called the \textit{variation index} of $x$; it measures the roughness of $x$ in terms of the finiteness of its $p$-th variation \cite{das-kim2024}. The quantity $\xi_m^{(p)}$ defined in \eqref{def : xi general} will play a key role in Section~\ref{subsec: paths with linear p-th variation}.

\begin{lemma} [Dyadic case of Theorem 4.3 of \cite{das-kim2024}]    \label{lem: X^p space}
    For $x \in C([0, 1])$ with the Faber-Schauder representation \eqref{Eq: Schauder representation}, we denote for any $p > 1$
    \begin{equation}    \label{def : xi general}
        \xi^{(p)}_m := 2^{-\frac{mp}{2}} \Big( \sum_{k \in I_m} |\theta^x_{m, k}|^p \Big), \qquad m \ge 0.
    \end{equation}
    Then,
    \begin{equation*}
        x \in \mathcal{X}^p_{\T} \quad \text{if and only if} \quad \limsup_{m \to \infty} \, \xi^{(p)}_m < \infty.
    \end{equation*}
\end{lemma}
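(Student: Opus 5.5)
The plan is to show that $\sup_n [x]^{(p)}_{\T^n}(1)$ is finite exactly when $\limsup_m \xi^{(p)}_m$ is finite. Both directions go through an exact identity between dyadic increments of $x$ and its Faber-Schauder coefficients.

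\emph{Step 1 (increment recursion).} Write $\Delta^n_k := x(t^n_{k+1}) - x(t^n_k)$ for $k \in I_n$. Formula \eqref{eq:theta_mk} gives, for the two children at level $n+1$,
\[
\Delta^{n+1}_{2k} = \tfrac12 \Delta^n_k + \tfrac12 2^{-\frac n2}\theta^x_{n,k},
\qquad
\Delta^{n+1}_{2k+1} = \tfrac12 \Delta^n_k - \tfrac12 2^{-\frac n2}\theta^x_{n,k}.
\]
Equivalently, $2^{-\frac n2}\theta^x_{n,k} = \Delta^{n+1}_{2k} - \Delta^{n+1}_{2k+1}$. Then
\[
\xi^{(p)}_n = \sum_{k\in I_n} \big|\Delta^{n+1}_{2k}-\Delta^{n+1}_{2k+1}\big|^p \le 2^{p-1}\,[x]^{(p)}_{\T^{n+1}}(1).
\]
This proves the implication $x\in\mathcal X^p_{\T}\Rightarrow \sup_m\xi^{(p)}_m<\infty$.

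\emph{Step 2 (converse).} Let $V_n := [x]^{(p)}_{\T^n}(1) = \sum_k |\Delta^n_k|^p$. From the recursion, each child increment has the form $\tfrac12(a\pm b)$ with $a = \Delta^n_k$ and $b = 2^{-\frac n2}\theta^x_{n,k}$. Minkowski's inequality in $\ell^p$ over the index set $\{2k,2k+1\}_{k\in I_n}$ gives
\[
V_{n+1}^{1/p} \le \tfrac12\Big(2\sum_k|\Delta^n_k|^p\Big)^{1/p} + \tfrac12\Big(2\,\xi^{(p)}_n\Big)^{1/p}
= 2^{\frac1p-1}\,V_n^{1/p} + 2^{\frac1p-1}\,(\xi^{(p)}_n)^{1/p}.
\]
Since $p>1$, the contraction factor $\rho := 2^{\frac1p-1}$ is strictly less than $1$. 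Iterating the linear recursion $u_{n+1}\le \rho u_n + \rho c_n$ with $u_n := V_n^{1/p}$ and $c_n := (\xi^{(p)}_n)^{1/p}$ yields
\[
u_n \le \rho^n u_0 + \sum_{j<n}\rho^{\,n-j}c_j .
\]
This is bounded by $u_0 + \tfrac{\rho}{1-\rho}\sup_j c_j$.

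\emph{Step 3 (from limsup to sup).} Each $\xi^{(p)}_m$ is a finite number, so $\limsup_m\xi^{(p)}_m<\infty$ is equivalent to $\sup_m \xi^{(p)}_m<\infty$. Step 2 then gives $\sup_n V_n<\infty$, i.e.\ $x\in\mathcal X^p_{\T}$. Note that $u_0 = |x(1)-x(0)|$ is finite.

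The only genuinely delicate point is Step 2. One must apply Minkowski's inequality on the whole level-$(n+1)$ sequence rather than termwise, so as to obtain a contraction factor strictly less than one. This is exactly where $p>1$ is used: at $p=1$ we get $\rho=1$, and the bound degenerates to a divergent sum.
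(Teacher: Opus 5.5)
Your proof is correct. The paper does not prove this lemma at all; it imports it as the dyadic case of Theorem~4.3 of \cite{das-kim2024}, which is stated for a general class of partition sequences with generalized Haar/Schauder functions.

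Your route is self-contained and purely dyadic. You read off the two-scale identity $\Delta^{n+1}_{2k}=\frac12\big(\Delta^n_k\pm 2^{-n/2}\theta^x_{n,k}\big)$ from \eqref{eq:theta_mk}, and then apply Minkowski in $\ell^p$ over the whole level. This is the one-step version of the level expansion the paper uses later in Lemma~\ref{lem:increment-rademacher}. Your contraction factor $2^{\frac1p-1}$ is exactly the $\rho$ that appears there, so the argument fits the paper's own machinery naturally.

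Your approach is also quantitative. You obtain $\xi^{(p)}_n\le 2^{p-1}[x]^{(p)}_{\T^{n+1}}(1)$ and $\big([x]^{(p)}_{\T^n}(1)\big)^{1/p}\le |x(1)-x(0)|+\frac{\rho}{1-\rho}\sup_j\big(\xi^{(p)}_j\big)^{1/p}$. Together these show that $\|x\|^{(p)}_{\T}$ is equivalent, with constants depending only on $p$, to $|x(0)|+|x(1)-x(0)|+\sup_m\big(\xi^{(p)}_m\big)^{1/p}$. That is stronger than the stated equivalence of finiteness.

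What your argument does not give directly is the general-partition form of the cited theorem. There the children do not split the parent increment evenly, and the coefficient normalization changes, so the clean midpoint identity and the uniform factor $\frac12$ would have to be replaced.
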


\bigskip

\section{Results}   \label{sec: results}

The results of this paper are organized into five closely related topics, each presented in a separate subsection.

\subsection{Paths with linear \texorpdfstring{$p$}{TEXT}-th variation}  \label{subsec: paths with linear p-th variation}

We first construct continuous functions with linear dyadic $p$-th variation for any $p>1$ by imposing a structural condition on their Faber-Schauder coefficients. The idea is that, if the coefficients have the same magnitude within each dyadic level, then the contribution of that level to the discrete $p$-th variation is distributed uniformly across dyadic subintervals. This leads to the following condition.

\begin{assumption}[Uniform magnitude condition] \label{assum: uniform mag}
    We say that a real sequence $(\theta_{m,k})_{m\ge0,\,k\in I_m}$ satisfies the \emph{uniform magnitude condition}, if there exist a sequence $(c_m)_{m\ge0}$ of nonnegative numbers and a sign array $(\sigma_{m,k})_{m\ge0,\,k\in I_m}\subseteq\{\pm1\}$ such that
    \begin{equation}\label{eq:TL-signed-coeff}
        \theta_{m,k}=c_m \sigma_{m,k} \quad \text{holds for every } k \in I_m, ~ m \ge 0.
    \end{equation}
\end{assumption}
\noindent Given a coefficient array $(\theta_{m,k})_{m\ge0,\,k\in I_m}$, we define for $p>1$ as in the notation \eqref{def : xi general}
\[
    \xi_m^{(p)} := 2^{-mp/2}\sum_{k\in I_m} |\theta_{m,k}|^p, \qquad m\ge0,
\]
and consider the Faber-Schauder series
\begin{equation}    \label{eq: FS repre}
    x(t)= \sum_{m=0}^{\infty} \sum_{k\in I_m} \theta_{m,k} e_{m,k}(t), \qquad t \in[0,1],
\end{equation}
whenever the series converges. For simplicity, compared to the Faber-Schauder representation \eqref{Eq: Schauder representation}, we restrict to the case $x(0)=x(1)=0$; the general case follows by adding an affine function, which does not affect the $p$-th variation. Under Assumption~\ref{assum: uniform mag}, convergence of $\xi_m^{(p)}$ turns out to be sufficient for $x$ to have linear $p$-th variation. Its proof is lengthy, so it is deferred to Section~\ref{subsec: proof main}.

\begin{theorem} \label{thm: xi implies p-th variation}
    Fix $p>1$. Suppose that a sequence $(\theta_{m,k})_{m\ge0,\,k\in I_m}$ satisfying the uniform magnitude condition of Assumption~\ref{assum: uniform mag} is given. If the sequence $\xi^{(p)}_n$ defined via \eqref{def : xi general} converges as $n \to \infty$, then $x$ of \eqref{eq: FS repre} is a $\frac 1p$-H\"older continuous function with linear $p$-th variation along $\T$. More precisely, there exists a positive constant $C_p$, which depends only on $p$, satisfying
    \begin{equation}    \label{eq: xi implies p-th variation}
        \lim_{n\to\infty} [x]^{(p)}_{\T^n}(t) = C_p t \lim_{n\to\infty} \xi^{(p)}_n, \qquad \forall \; t \in [0, 1].
    \end{equation}
\end{theorem}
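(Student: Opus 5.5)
Throughout, write $\xi:=\lim_n \xi^{(p)}_n$. The uniform magnitude condition gives $\xi^{(p)}_m = 2^{m(1-p/2)} c_m^p$, so $c_m \sim \xi^{1/p} 2^{m(1/2-1/p)}$.

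\emph{H\"older regularity.} Convergence of $\xi^{(p)}_m$ implies $\sup_m \xi^{(p)}_m<\infty$. Hence $\sup_{m,k} 2^{m(1/p-1/2)}|\theta_{m,k}| = \sup_m (\xi^{(p)}_m)^{1/p}<\infty$. This yields absolute uniform convergence of the series \eqref{eq: FS repre}, since at each $t$ at most one $e_{m,k}$ per level is nonzero and $\|e_{m,k}\|_\infty = 2^{-m/2-1}$. Lemma~\ref{lem: Ciesielski} with $\alpha=1/p$ then gives $x\in C^{1/p}([0,1])$. Strictly, Ciesielski's lemma is stated for $\alpha\in(0,1)$, so for $p>1$ it applies. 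If the boundary case is delicate, I would prove the Hölder bound directly by splitting the series at level $m$ with $2^{-m}\approx|t-s|$. Below level $m$ use Lipschitz bounds $\sum_{j<m} c_j 2^{j/2}$; above it use sup bounds $\sum_{j\ge m} c_j 2^{-j/2}$. Both sums are geometric when $p\neq 2$. When $p=2$ the lower sum gives a log loss, so I would be careful there; Ciesielski is still fine for $\alpha=1/2$.

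\emph{Increment formula.} Fix $n$ and a dyadic interval $[t^n_j,t^n_{j+1}]$. Levels $m\ge n$ vanish at dyadic points of level $n$, so the increment only involves levels $m<n$. At each such level exactly one $e_{m,k}$ is active on the interval, and its slope there is $\pm 2^{m/2}$. Hence
\[
x(t^n_{j+1})-x(t^n_j) = 2^{-n}\sum_{m=0}^{n-1} \theta_{m,k_m(j)}\, \pm 2^{m/2} = 2^{-n}\sum_{m<n} c_m 2^{m/2}\varepsilon_m(j),
\]
where $\varepsilon_m(j)\in\{\pm1\}$ combines $\sigma_{m,k_m(j)}$ with the Haar sign determined by the $(m+1)$-th binary digit of $j$. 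Since $c_m 2^{m/2}\approx \xi^{1/p}2^{m(1-1/p)}$ grows geometrically, reindex with $r=n-1-m$ and write $\beta:=2^{-(1-1/p)}<1$. Then $2^{n/p}\,\Delta_j x \approx \xi^{1/p}\,\lambda \sum_{r\ge0}\beta^r \varepsilon_{n-1-r}(j)$ for a constant $\lambda$ depending only on $p$. The error is controlled by the convergence of $\xi_m$ and dominated convergence over the geometric weights.

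\emph{Main obstacle: averaging $|\cdot|^p$.} We need $2^{-n}\sum_{j<2^n t}\big|\sum_r \beta^r\varepsilon_{n-1-r}(j)\big|^p \to C' t$ with $C'$ independent of the signs $\sigma$. This is the hardest step, and the key observation is as follows. As $j$ runs over a block of $2^{L}$ consecutive indices sharing the higher digits, the fine-level signs $\varepsilon_{n-1-r}(j)$ for $r<L$ range over all $2^L$ patterns bijectively. This holds because at each fine level the Haar sign flips with the corresponding binary digit, and the multiplication by $\sigma$ only depends on coarser digits, so it is a bijection level by level. Hence each block average equals $\E\big|\sum_{r<L}\beta^r\epsilon_r + R\big|^p$ with i.i.d.\ Rademacher $\epsilon_r$ and a tail $|R|\le \beta^L/(1-\beta)$. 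Letting $L\to\infty$ gives
\[
C' = \E\Big|\sum_{r\ge0}\beta^r\epsilon_r\Big|^p .
\]
Summing over the $\approx 2^{n-L}t$ blocks up to $t$ gives $[x]^{(p)}_{\T^n}(t)\to C_p\,\xi\, t$. Here $C_p=\lambda^p C'$, which is positive and depends only on $p$. First take dyadic $t$; general $t$ then follows by monotonicity and continuity, with the partial interval contributing $O(2^{-n})$ by Hölder continuity.
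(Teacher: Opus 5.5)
Your proposal is correct and follows essentially the paper's proof: Ciesielski's criterion with $\alpha=\frac1p$, the signed increment formula $2^{n/p}\Delta^n_k x=\sum_{j=1}^n\rho^j y_{n-j}\varepsilon_j(k)$ with $\rho=2^{-(1-1/p)}$, the Rademacher sign bijection on dyadic blocks (coarse-level part constant on each block and geometrically small), and a monotone extension from dyadic to general $t$. The differences are organizational: you average over blocks of fixed length $2^L$ and then let $L\to\infty$, which gives the constant and the linearity in $t$ together, and you prove the sign bijection directly from its triangular digit structure; the paper instead first computes $t=1$ via the global bijection and $L^p$ convergence, then proves equipartition over $2^m$ blocks of length $2^{n-m}$ (citing the bijection from Mišura--Schied). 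One correction to your fallback aside, which you do not need: there is no logarithmic loss at $p=2$, since with $c_j\lesssim 2^{j(\frac12-\frac1p)}$ both $2^{-m}\sum_{j<m}2^{j(1-\frac1p)}$ and $\sum_{j\ge m}2^{-j/p}$ are geometric of order $2^{-m/p}$ for every $p>1$.
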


Theorem~\ref{thm: xi implies p-th variation} provides a concrete coefficient-level criterion for producing reference paths with linear dyadic $p$-th variation. Other approaches to construct paths with linear $p$-th variation have been studied in \cite{Misura2019, schied2020fractal}. These paths will play a central role in the multiplicative construction developed in the next subsection.

\medskip

\subsection{Constructing paths with prescribed 
\texorpdfstring{$p$}{TEXT}-th variation} \label{subsec: translation}

This subsection develops a multiplicative construction of continuous paths with prescribed, possibly nonlinear $p$-th variation, starting from the linear $p$-th variation paths built in the previous subsection. The basic observation is that multiplying a path in $V^p_{\T}$ by a path with vanishing $p$-th variation preserves membership in $V^p_{\T}$ and transforms the $p$-th variation in an explicit way.

\begin{proposition}   \label{prop: translation}
    For any $p>1$ suppose that $x\in V^p_{\T}$ and $g\in V^p_{\T}$ with $[g]^{(p)}_{\T} \equiv 0$ are given. Then, the continuous function $y := g x \in C([0, 1])$ also belongs to $V^p_{\T}$, and admits finite dyadic $p$-th variation
    \begin{equation}    \label{eq: p-th variation of y}
        [y]^{(p)}_{\T}(t) = \int_0^t |g(u)|^p \, d[x]^{(p)}_{\T}(u), \qquad \forall \, t \in [0, 1].
    \end{equation}
\end{proposition}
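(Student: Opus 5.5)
We fix $t\in[0,1]$ and expand the increment of $y=gx$ over each dyadic interval $[t^n_j\wedge t,\,t^n_{j+1}\wedge t]$. Writing $s=t^n_j\wedge t$ and $r=t^n_{j+1}\wedge t$, we split
\[
y(r)-y(s)=g(s)\big(x(r)-x(s)\big)+x(r)\big(g(r)-g(s)\big)=:A_j+B_j .
\]
The plan is to show that $\sum_j|A_j+B_j|^p$ and $\sum_j|A_j|^p$ have the same limit. We then identify
\[
\lim_n\sum_j|A_j|^p=\int_0^t|g(u)|^p\,d[x]^{(p)}_{\T}(u).
\]

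\emph{Step 1 (the main term).} We have
\[
\sum_j|A_j|^p=\sum_j|g(t^n_j\wedge t)|^p\,\big|x(t^n_{j+1}\wedge t)-x(t^n_j\wedge t)\big|^p .
\]
This equals $\int_0^t h_n\,d\mu_n$, where $h_n$ is the step function taking the value $|g(t^n_j)|^p$ on $(t^n_j,t^n_{j+1}]$, and $\mu_n$ is the discrete measure associated with the nondecreasing function $u\mapsto[x]^{(p)}_{\T^n}(u)$, which places mass $|x(t^n_{j+1}\wedge t)-x(t^n_j\wedge t)|^p$ at the relevant points. Since $x\in V^p_{\T}$, Definition~\ref{Def: p-th variation} gives $[x]^{(p)}_{\T^n}\to[x]^{(p)}_{\T}$ uniformly with a continuous limit, so $\mu_n\to d[x]^{(p)}_{\T}$ weakly on $[0,t]$. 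In addition, $h_n\to|g|^p$ uniformly by uniform continuity of $g$, and the total masses $\mu_n([0,1])$ are bounded. A standard Riemann--Stieltjes argument therefore yields convergence to $\int_0^t|g|^p\,d[x]^{(p)}_{\T}$. Concretely, we bound
\[
\Big|\int h_n\,d\mu_n-\int|g|^p\,d\mu_n\Big|\le\|h_n-|g|^p\|_\infty\sup_n\mu_n([0,1]),
\]
and then use weak convergence against the continuous integrand $|g|^p$. The endpoint $t$ is not a dyadic point and falls in a partial interval, but continuity of $[x]^{(p)}_{\T}$ makes this harmless.

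\emph{Step 2 (the cross term vanishes).} We have
\[
\sum_j|B_j|^p\le\|x\|_\infty^p\,[g]^{(p)}_{\T^n}(t)\to0
\]
by the hypothesis $[g]^{(p)}_{\T}\equiv0$. To combine the two terms, we use the $\ell^p$ Minkowski inequality on the vectors $(A_j)_j$ and $(B_j)_j$:
\[
\Big|\Big(\sum_j|A_j+B_j|^p\Big)^{1/p}-\Big(\sum_j|A_j|^p\Big)^{1/p}\Big|\le\Big(\sum_j|B_j|^p\Big)^{1/p}\to0 .
\]
Since $\big(\sum_j|A_j|^p\big)^{1/p}$ converges by Step 1, it follows that $[y]^{(p)}_{\T^n}(t)$ converges to the same limit. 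This avoids any need for a mixed-term estimate via Hölder, and it works for every $p>1$ (indeed for every $p\ge1$).

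\emph{Step 3 (conclusion).} The limit $t\mapsto\int_0^t|g|^p\,d[x]^{(p)}_{\T}$ is continuous, because $[x]^{(p)}_{\T}$ is continuous and nondecreasing and $g$ is bounded. Hence $y\in V^p_{\T}$ with the claimed formula, and uniform convergence then follows from the cited result of \cite{perkowski2019}.

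The only slightly delicate step is Step 1: the weak convergence of the discrete measures $\mu_n$ together with the uniform approximation $h_n\to|g|^p$. I would handle it by first fixing a coarse dyadic level $N$ and using uniform continuity of $g$ to replace $|g|^p$ on each level-$N$ interval by a constant, up to an error $\varepsilon$ times the bounded total mass. The sum then reduces to finitely many differences of $[x]^{(p)}_{\T^n}$ at level-$N$ points, and these converge by pointwise convergence.
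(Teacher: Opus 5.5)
Your proposal is correct and follows essentially the paper's route: the same splitting $y(r)-y(s)=A_j+B_j$, Minkowski's inequality in $\ell^p$ together with $\sum_j|B_j|^p\le\|x\|_\infty^p[g]^{(p)}_{\T^n}(t)\to0$ to discard the cross term, and weak convergence of the discrete $p$-th variation measures to identify $\lim_n\sum_j|A_j|^p$. Your coarse-level Riemann--Stieltjes variant is an equally valid and more elementary way to do this last step.

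One harmless imprecision: $u\mapsto[x]^{(p)}_{\T^n}(u)$ is continuous but generally not monotone between grid points, since its last term is the partial increment $|x(u)-x(t^n_i)|^p$. So $\mu_n$ should be defined directly by its point masses; its distribution function differs from $[x]^{(p)}_{\T^n}$ by at most one such partial increment, which vanishes uniformly as $n\to\infty$.
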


The proof of Proposition~\ref{prop: translation} is given in Section~\ref{subsec: proof translation}. Proposition~\ref{prop: translation} provides the basic mechanism for constructing paths with $p$-th variation: once a reference path with known $p$-th variation is fixed, one only needs to choose a suitable multiplier. The following theorem uses this mechanism to construct continuous paths with a given (nonlinear) $p$-th variation. The role of the hypothesis on $(h')^{1/p}$ in the theorem will become clear from the proof.

\begin{theorem} \label{thm: recipe}
    Fix any $p>1$. Suppose that $h\in C^1([0,1])$ is non-decreasing with $h(0)=0$, and that $\big[(h')^{1/p}\big]^{(p)}_{\T}\equiv 0$. Then, there exists a function $y\in V^p_{\T}$ such that
    \begin{equation}    \label{eq: y p-th variation}
        [y]^{(p)}_{\T}(t)=h(t),\qquad \forall\,t\in[0,1].
    \end{equation}
    Moreover, if in addition $(h')^{1/p}\in C^{\alpha}([0,1])$ for some $\alpha\leq \frac1p$, then $y$ is also in $C^{\alpha}([0,1])\cap V^p_{\T}$.
\end{theorem}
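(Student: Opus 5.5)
The plan is to combine Theorem~\ref{thm: xi implies p-th variation} with Proposition~\ref{prop: translation}: build a reference path $x$ with $[x]^{(p)}_{\T}(t)=t$, and then multiply it by $g:=(h')^{1/p}$.

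\textbf{Step 1: the reference path.} Take the uniform magnitude array $\theta_{m,k}=c\,2^{m(\frac12-\frac1p)}$ with all signs $\sigma_{m,k}=+1$ (any sign pattern would do). Then
\[
\xi^{(p)}_m=2^{-mp/2}\,2^m\,c^p\,2^{m(\frac p2-1)}=c^p
\]
for every $m$, so the sequence is constant and in particular convergent. Choosing $c=C_p^{-1/p}$, Theorem~\ref{thm: xi implies p-th variation} gives that the series \eqref{eq: FS repre} defines a $\frac1p$-H\"older function $x\in V^p_{\T}$ with $[x]^{(p)}_{\T}(t)=t$. As a consistency check, Lemma~\ref{lem: Ciesielski} also confirms $x\in C^{1/p}([0,1])$, since $2^{m(\frac1p-\frac12)}|\theta_{m,k}|=c$ is bounded.

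\textbf{Step 2: the multiplier.} Set $g:=(h')^{1/p}$. Because $h\in C^1$ is non-decreasing, $h'\ge0$ is continuous, and hence $g\ge0$ is continuous. The hypothesis $[g]^{(p)}_{\T}\equiv0$ says precisely that $g\in V^p_{\T}$ with vanishing $p$-th variation. Proposition~\ref{prop: translation} then applies to $y:=gx$ and gives $y\in V^p_{\T}$ with
\[
[y]^{(p)}_{\T}(t)=\int_0^t|g(u)|^p\,du=\int_0^t h'(u)\,du=h(t)-h(0)=h(t).
\]
This proves \eqref{eq: y p-th variation}.

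\textbf{Step 3: H\"older regularity.} Suppose in addition that $g\in C^\alpha([0,1])$ with $\alpha\le\frac1p$. Since $x\in C^{1/p}\subset C^\alpha$, both factors are bounded on $[0,1]$. The product estimate
\[
|y(t)-y(s)|\le\|g\|_\infty|x(t)-x(s)|+\|x\|_\infty|g(t)-g(s)|
\]
then shows $y\in C^\alpha([0,1])$.

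\textbf{Main obstacle.} All the substantive work is already contained in Theorem~\ref{thm: xi implies p-th variation} and Proposition~\ref{prop: translation}. The only point needing care is to make sure the coefficient choice $\theta_{m,k}=c\,2^{m(\frac12-\frac1p)}$ actually yields a convergent series. This holds because $\sup_k|e_{m,k}|=2^{-m/2-1}$ and the supports of $e_{m,k}$ within a level are disjoint, so level $m$ contributes at most $c\,2^{-m/p-1}$ in sup norm. Since $p>1$, these contributions are summable and the series converges uniformly.
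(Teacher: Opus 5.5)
Your proposal is correct and follows the paper's proof. You build a reference path with linear $p$-th variation from Theorem~\ref{thm: xi implies p-th variation}, using uniform-magnitude coefficients of size $2^{m(\frac12-\frac1p)}$ so that $\xi^{(p)}_m$ is constant. You then multiply it by $(h')^{1/p}$ via Proposition~\ref{prop: translation}, and get the H\"older claim from the product estimate. The only difference is cosmetic: you absorb $C_p^{-1/p}$ into the coefficients so that $[x]^{(p)}_{\T}(t)=t$. The paper instead keeps $c_m=2^{m(\frac12-\frac1p)}$ and uses the multiplier $(h'/C_p)^{1/p}$, which relies on the trivial fact that a constant multiple of $(h')^{1/p}$ still has vanishing $p$-th variation.
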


\begin{proof}
    Let $(\theta_{m,k})$ satisfy Assumption~\ref{assum: uniform mag} with $c_m=2^{m(\frac12-\frac1p)}$, and let $x$ be the corresponding path in \eqref{eq: FS repre}. Then $\xi_m^{(p)}\equiv 1$, $x$ is $\frac1p$-H\"older continuous by Lemma~\ref{lem: Ciesielski}, and Theorem~\ref{thm: xi implies p-th variation} yields $[x]^{(p)}_{\T}(t)=C_p t$ for all $t\in[0,1]$.

    Set
    \begin{equation}    \label{def: g function}
        g(t):=\left(\frac{h'(t)}{C_p}\right)^{1/p},\qquad t\in[0,1].
    \end{equation}
    By assumption, $[g]^{(p)}_{\T}\equiv 0$ (thus $g\in V^p_{\T}$). Hence, applying Proposition~\ref{prop: translation} to $y(t):=g(t)x(t)$ gives
    \[
        [y]^{(p)}_{\T}(t)
        =\int_0^t |g(u)|^p\,d[x]^{(p)}_{\T}(u)
        =C_p\int_0^t |g(u)|^p\,du
        =\int_0^t h'(u)\,du
        =h(t),\qquad \forall \, t\in[0,1].
    \]
    If in addition $(h')^{1/p}\in C^{\alpha}([0,1])$ for some $\alpha\leq\frac1p$, then $g\in C^{\alpha}([0,1])$. Since $x \in C^{\frac1p}([0,1])$, their product $y$ belongs to $C^{\min\{\alpha,\frac1p\}}([0,1]) = C^\alpha([0,1])$.
\end{proof}

\begin{remark}
    In \cite[Example 1]{fake_fBM}, one can construct paths with H\"older exponent strictly smaller than $\frac13$ with vanishing quadratic variation. Choosing this function as $g$ in \eqref{def: g function} with $p=2$, Theorem~\ref{thm: recipe} yields a path $y\in C^{1/3}([0,1])\cap V^2_{\T}$ whose quadratic variation is nontrivial and is given by \eqref{eq: y p-th variation}. This path $y$ is especially interesting as one cannot apply the typical rough It\^o formula \cite{FrizHairer} for paths with H\"older exponent strictly smaller than $\frac13$, whereas F\"ollmer's pathwise It\^o theory remains applicable.
\end{remark}

The next remark and example provide a few sufficient conditions and examples for $h$ to satisfy the hypotheses of Theorem~\ref{thm: recipe}.

\begin{remark}     \label{rem:recipe-sufficient}
    Let $p>1$ and $h\in C^1([0,1])$ be non-decreasing with $h(0)=0$. In view of Lemma~\ref{lem:inclusions}, the hypothesis $\big[(h')^{1/p}\big]^{(p)}_{\T}\equiv 0$ in Theorem~\ref{thm: recipe} holds, for instance, under any of the following conditions:
    \begin{enumerate}[(i)]
        \item $(h')^{1/p} \in V^q_{\T}$ for some $q < p$, or a stronger sufficient condition is that $(h')^{1/p}$ has bounded variation on $[0,1]$;
        \item $(h')^{1/p}\in C^{\alpha}([0,1])$ for some $\alpha>1/p$.
    \end{enumerate}
    Moreover, the additional assumption $(h')^{1/p}\in C^{\alpha}([0,1])$ for some $\alpha\leq \frac1p$ in Theorem~\ref{thm: recipe} holds, for instance, if $h'$ is Lipschitz on $[0,1]$.
\end{remark}

\begin{example}[Concrete choices of $h$]    \label{ex:recipe-concrete}
    Fix $p>1$. Each of the following functions $h$ satisfies the hypotheses of Theorem~\ref{thm: recipe}, and also the additional H\"older condition $(h')^{1/p}\in C^{\alpha}([0,1])$ for some $\alpha \le \frac1p$; indeed, in all cases, $h\in C^2([0,1])$, hence $h'$ is Lipschitz on $[0,1]$, which implies $(h')^{1/p}\in C^{\alpha}([0,1])$.
    \begin{enumerate}[(1)]
        \item Polynomials with nonnegative coefficients: for $M \in \mathbb{N}$, $h(t)=\sum_{k=1}^M a_k t^k$ with $a_k\ge 0$.
        \item Exponential growth: for $a>0$, $h(t)=e^{at}-1$.
        \item Logarithmic growth: for $b>0$, $h(t)=\log(1+bt)$.
        \item Rational function: $h(t)=\frac{t}{1+t}$.
        \item Arctangent function: for $b>0$, $h(t)=\arctan(bt)$.
    \end{enumerate}
\end{example}

Theorem~\ref{thm: recipe} shows that for any $p>1$ the space $V^p_{\T}\cap C^{\frac1p}([0,1])$ is nonempty, and its proof provides an explicit construction. The following remark discusses the spaces $V^p_{\T}\cap C^{\alpha}([0,1])$ depending on the value of $\alpha p$.

\begin{remark}  \label{rem: other spaces}
    The remaining cases are as follows.
    \begin{enumerate}
        \item [(i)] \textbf{Case $\alpha p< 1$}: Let $y\in V^p_{\T}\cap C^{\frac1p}([0,1])$ be the path constructed in Theorem~\ref{thm: recipe}. Since $C^{\beta}([0,1])\subset C^{\alpha}([0,1])$ for $0<\alpha\le \beta\le 1$, we have $y\in V^p_{\T}\cap C^{\alpha}([0,1])$ for every $\alpha\le \frac1p$. In particular, $V^p_{\T}\cap C^{\alpha}([0,1])$ is nonempty for $\alpha p < 1$ and contains paths with nontrivial $p$-th variation as in \eqref{eq: y p-th variation}.

        \item [(ii)] \textbf{Case $\alpha p > 1$}: By Lemma~\ref{lem:inclusions} (iii), if $x\in C^{\alpha}([0,1])$ with $\alpha>\frac1p$, then $[x]^{(p)}_{\T}\equiv 0$. Consequently, $V^p_{\T}\cap C^{\alpha}([0,1])$ is nonempty for $\alpha p>1$ (it contains every path in $C^\alpha([0, 1])$), but every element has vanishing $p$-th variation along $\T$.
    \end{enumerate}
\end{remark}

We conclude this subsection with a brief note that, for $p=2$, \citet{Misura2019} obtained related deterministic constructions of continuous functions with prescribed pathwise quadratic variation. Their approach combines dyadic/Faber-Schauder descriptions of quadratic variation with, in the local case, a more specialized construction based on pathwise It\^o differential equations of Doss-Sussman type. By contrast, our approach here is more direct and is tailored to the dyadic $p$-th variation setting for general $p>1$: starting from a reference path with linear dyadic $p$-th variation, we multiply by functions of vanishing $p$-th variation to obtain explicit formulas for the resulting prescribed variation. This same mechanism will also underlie the density and transported Banach space results developed in the subsequent subsections.

\medskip

\subsection{Paths with prescribed \texorpdfstring{$p$}{TEXT}-th variation are dense in \texorpdfstring{$C([0, 1])$}{TEXT}}  \label{subsec: dense}

Inspired by Section~\ref{subsec: translation}, we now show that any continuous path in $C([0,1])$ can be approximated arbitrarily close in uniform norm by a path with prescribed $p$-th variation along the dyadic partition sequence. Hence, for any $p>1$, every nonempty class of continuous paths with a fixed prescribed $p$-th variation is dense in $C([0,1])$.

Let $\tau : [0,1] \rightarrow [0,\infty)$ be any continuous, non-decreasing function with $\tau(0) = 0$. Given such $\tau$, we consider the subset of $V^p_\T$ that has $p$-th variation equal to $\tau$:
\begin{equation}    \label{def: tau p-th variation}
    V^{p,\tau}_{\T} := \big\{x \in V^p_\T : [x]^{(p)}_\T = \tau \big\} \subset  V^p_\T.
\end{equation}
For several classes of such functions $\tau$, nonemptiness of $V^{p,\tau}_{\T}\cap C^\alpha([0,1])$ was established in Section~\ref{subsec: translation}; see Theorem~\ref{thm: recipe}, Remark~\ref{rem:recipe-sufficient}, and Remark~\ref{rem: other spaces}. The next theorem shows that every such nonempty class is in fact dense in $C([0,1])$. Its proof, given in Section~\ref{subsec: proof density}, combines the construction from Section~\ref{subsec: translation} with polynomial approximation, implemented via Bernstein polynomials.

\begin{theorem}  \label{thm:densityoffnsinVp}
    Fix $p>1$, $\alpha\in(0,1)$, and a continuous, non-decreasing function $\tau$ defined on $[0, 1]$ with $\tau(0)=0$ such that $V^{p,\tau}_{\T}\cap C^\alpha([0,1])\neq\emptyset$. Then $V^{p,\tau}_{\T}\cap C^\alpha([0,1])$ is dense in $C([0,1])$.
\end{theorem}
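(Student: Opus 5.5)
Fix an element $x_0\in V^{p,\tau}_{\T}\cap C^\alpha([0,1])$, a target $f\in C([0,1])$ and $\varepsilon>0$. The plan is to perturb $x_0$ additively by a smooth function. Adding a smooth function $q$ keeps the $\alpha$-H\"older property. It also leaves the $p$-th variation unchanged, because $q$ has vanishing $p$-th variation and, by a Minkowski-type estimate, finite $p$-th variation is stable under adding such a path. So the whole proof reduces to showing that $x_0+q$ can be made uniformly close to $f$ with $q$ smooth. This is the sense in which the argument combines Section~\ref{subsec: translation} with Bernstein approximation.

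\textbf{Step 1 (approximation).} Take $g:=f-x_0\in C([0,1])$ and let $q:=B_N g$ be its Bernstein polynomial of high degree $N$. By Weierstrass/Bernstein, $\|q-g\|_\infty<\varepsilon$ for $N$ large. Set $y:=x_0+q$. Then $\|y-f\|_\infty=\|q-g\|_\infty<\varepsilon$.

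\textbf{Step 2 (regularity).} Since $q$ is a polynomial it is Lipschitz, hence in $C^\alpha([0,1])$. Therefore $y\in C^\alpha([0,1])$.

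\textbf{Step 3 (variation is unchanged; the main point).} We must show $[y]^{(p)}_{\T}=\tau$. Write $a_j:=x_0(t^n_{j+1}\wedge t)-x_0(t^n_j\wedge t)$ and $b_j:=q(t^n_{j+1}\wedge t)-q(t^n_j\wedge t)$. Minkowski's inequality in $\ell^p$ gives
\[
\Big|\big([y]^{(p)}_{\T^n}(t)\big)^{1/p}-\big([x_0]^{(p)}_{\T^n}(t)\big)^{1/p}\Big|\le\Big(\sum_j|b_j|^p\Big)^{1/p}\le \|q'\|_\infty\, 2^{n(1-p)/p},
\]
and the right-hand side tends to $0$ uniformly in $t$ because $p>1$. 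Since $[x_0]^{(p)}_{\T^n}\to\tau$ uniformly, it follows that $[y]^{(p)}_{\T^n}\to\tau$ uniformly, with $\tau$ continuous. Hence $y\in V^{p,\tau}_{\T}$.

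Combining Steps 1–3, every neighbourhood of $f$ meets $V^{p,\tau}_{\T}\cap C^\alpha([0,1])$, which proves density.

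The only step needing care is the one just done: additive stability of $V^p_{\T}$ under smooth perturbations. The Minkowski estimate settles it without invoking Proposition~\ref{prop: translation}, so I expect no genuine obstacle. The multiplicative alternative $y=g x_0$ would instead change the variation to $\int|g|^p\,d\tau$, which is why the additive perturbation is the right choice here.
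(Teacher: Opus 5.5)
Your proof is correct, and it takes a more direct route than the paper's.

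\textbf{What the paper does.} It fixes $y\in V^{p,\tau}_{\T}\cap C^\alpha([0,1])$ and, for an $\alpha$-H\"older target $x$, builds $y_n$ by replacing the affine part and all Faber--Schauder coefficients of $y$ at levels $m<n$ with those of $x$. It then argues in three steps:
\begin{enumerate}[(i)]
\item \cite[Lemma~3.1]{Misura2019} (adding a piecewise linear function with breakpoints in $\T^n$ does not change the $p$-th variation) gives $[y_n]^{(p)}_{\T}=\tau$;
\item Lemma~\ref{lem: Ciesielski} gives $\sup_n\|y_n\|_{C^\alpha}<\infty$, hence $\|x-y_n\|_\infty\le C2^{-n\alpha}$;
\item for an arbitrary continuous target, Bernstein approximation, the H\"older bound of Lemma~\ref{lem: Bernstein poly}, and a diagonal argument finish the proof.
\end{enumerate}

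\textbf{How the two routes relate.} The paper's $y_n$ is exactly $y$ plus the piecewise linear interpolant of $x-y$ on $\T^n$. So both proofs perturb a fixed element of $V^{p,\tau}_{\T}\cap C^\alpha([0,1])$ additively by a Lipschitz function. They differ only in which perturbation is chosen and in how one certifies that it does not change the variation. Your choice is a polynomial approximant of $f-x_0$, certified by the same Minkowski estimate used in the proof of Proposition~\ref{prop: translation}.

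\textbf{What your route buys.} It needs no Faber--Schauder coefficients, no H\"older control of Bernstein polynomials, and no diagonal argument. It also proves a stronger structural fact: $V^{p,\tau}_{\T}\cap C^\alpha([0,1])$ is invariant under translation by $V^{p,0}_{\T}\cap C^\alpha([0,1])$, which contains the dense set of polynomials. Moreover, your estimate only uses $\sum_j|b_j|^p\le\|q'\|_\infty^p\,\delta_n^{p-1}$, where $\delta_n$ is the mesh of the $n$-th partition. It therefore gives the analogous density statement along any partition sequence with vanishing mesh, without the time-change reduction of Section~\ref{sec: extention to general partitions}.

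\textbf{What the paper's route buys.} It gives quantitative, coefficient-level information, in keeping with the paper's viewpoint. For an $\alpha$-H\"older target, the approximants interpolate it on $\T^n$, stay bounded in $C^\alpha$, and converge at the explicit rate $2^{-n\alpha}$.
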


\medskip

\subsection{Banach subspaces of \texorpdfstring{$V^p_{\T}$}{TEXT} via multiplicative transports}       \label{subsec: Banach subspaces}

We now use the multiplicative construction from Section~\ref{subsec: translation} to embed Banach spaces of vanishing $p$-th variation paths into the nonlinear space $V^p_{\T}$. To make the transport map invertible, we shall work with a strictly positive reference path obtained by shifting a path with linear dyadic $p$-th variation. Fix $p>1$, and let $x\in V^p_{\T}$ be a path with linear dyadic $p$-th variation such that, for some constant $C_p>0$
\begin{equation*}
    [x]^{(p)}_{\T}(t)=C_p t,\qquad \forall \, t\in[0,1].
\end{equation*}
For example, the function $x$ constructed in the proof of Theorem~\ref{thm: recipe} satisfies this condition. Choose a constant $M>\|x\|_\infty$ and set
\begin{equation}    \label{def: shifted x}
    \bar x:=x+M,
\end{equation}
so that $\bar x\in C([0,1])$ satisfies $\inf_{t\in[0,1]}\bar x(t)\ge M-\|x\|_\infty>0$, and
\begin{equation}    \label{eq: linear variation x bar}
    [\bar x]^{(p)}_{\T}(t)=[x]^{(p)}_{\T}(t)=C_p t, \qquad \forall \, t\in[0,1].
\end{equation}

Building on the multiplicative construction from Section~\ref{subsec: translation} and applying it to the shifted path $\bar x$, the next theorem produces Banach subspaces of $V^p_{\T}$ consisting of paths with explicitly controlled $p$-th variation.

\begin{theorem}[Transported Banach subspaces]   \label{thm: transported Banach}
    Fix $p>1$. For any path $x \in V^p_{\T}$ with linear dyadic $p$-th variation, consider its positive shift $\bar x$, given by \eqref{def: shifted x}, satisfying \eqref{eq: linear variation x bar}. Let $(B,\|\cdot\|_{B})$ be a Banach space such that $B \subset V^{p, 0}_{\T}$ in the notation of \eqref{def: tau p-th variation}, i.e.,
    \begin{equation}    \label{con: Banach B}
        B \subset V^p_{\T} \qquad \text{and} \qquad [g]^{(p)}_{\T}\equiv 0 ~ \text{ for every } g\in B.
    \end{equation}
    Define $\mathcal L_{\bar x}(B) := \{ g \bar{x}: g\in B \} \subset C([0,1])$ and equip $\mathcal L_{\bar x}(B)$ with the norm
    \begin{equation}    \label{eq: Lx norm}
        \|y\|_{\mathcal L} := \|g\|_B = \Big\|\frac{y}{\bar x}\Big\|_{B}, \qquad y\in\mathcal L_{\bar x}(B).
    \end{equation}
    Then, the map $T : B \to \mathcal L_{\bar x}(B)$ defined by $T(g)=g\bar x$ is a linear isometric isomorphism. In particular, $\big(\mathcal L_{\bar x}(B),\|\cdot\|_{\mathcal L}\big)$ is a Banach space. Moreover, $\mathcal{L}_{\bar x}(B) \subset V^p_{\T}$ and for every $y=g\bar x\in\mathcal L_{\bar x}(B)$
    \begin{equation}    \label{eq: pvar on Lx}
        [y]^{(p)}_{\T}(t)=\int_0^t |g(u)|^p\,d[\bar x]^{(p)}_{\T}(u) = C_p\int_0^t |g(u)|^p\,du,\qquad \forall \, t\in[0,1].
    \end{equation}
\end{theorem}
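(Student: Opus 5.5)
The argument has three parts: algebraic properties of $T$, completeness carried over from $B$, and the variation formula obtained from Proposition~\ref{prop: translation}.

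\textbf{Linearity and bijectivity.} For $g_1,g_2\in B$ and scalars $a,b$ we have
\[
(ag_1+bg_2)\bar x = a\,g_1\bar x + b\,g_2\bar x ,
\]
so $T$ is linear. Its image $\mathcal L_{\bar x}(B)$ is therefore a linear subspace of $C([0,1])$, being the image of a vector space under a linear map.

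For injectivity, note that $\inf_{t}\bar x(t)\ge M-\|x\|_\infty>0$. Hence $g\bar x=0$ forces $g=0$ pointwise. More generally, each $y\in\mathcal L_{\bar x}(B)$ determines $g=y/\bar x$ uniquely. This is exactly what makes the norm \eqref{eq: Lx norm} well defined, and the strict positivity of $\bar x$ is the only place where the shift by $M$ is really used.

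Surjectivity onto $\mathcal L_{\bar x}(B)$ holds by definition of this set. The norm axioms for $\|\cdot\|_{\mathcal L}$ follow directly from those of $\|\cdot\|_B$ via $y\mapsto y/\bar x$. Moreover $\|T g\|_{\mathcal L}=\|g\|_B$ holds tautologically, so $T$ is a linear isometric isomorphism.

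\textbf{Completeness.} Let $(y_n)$ be Cauchy in $\mathcal L_{\bar x}(B)$. Then $g_n=y_n/\bar x$ is Cauchy in $B$, so $g_n\to g$ in $B$ for some $g\in B$. By the isometry, $y_n\to g\bar x$ in $\|\cdot\|_{\mathcal L}$, and $g\bar x\in\mathcal L_{\bar x}(B)$. Thus $\mathcal L_{\bar x}(B)$ is a Banach space.

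\textbf{Variation formula.} First, $\bar x\in V^p_{\T}$ with $[\bar x]^{(p)}_{\T}(t)=C_pt$. Adding the constant $M$ leaves every increment $x(t^n_{j+1}\wedge t)-x(t^n_j\wedge t)$ unchanged, so $[\bar x]^{(p)}_{\T^n}=[x]^{(p)}_{\T^n}$ for each $n$; this is \eqref{eq: linear variation x bar}.

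Next, each $g\in B$ lies in $V^p_{\T}$ with $[g]^{(p)}_{\T}\equiv0$, by \eqref{con: Banach B}. Proposition~\ref{prop: translation}, applied with $x$ replaced by $\bar x$, then gives $y=g\bar x\in V^p_{\T}$ and
\[
[y]^{(p)}_{\T}(t)=\int_0^t|g(u)|^p\,d[\bar x]^{(p)}_{\T}(u).
\]
The Stieltjes integrator is $u\mapsto C_pu$, so this integral equals $C_p\int_0^t|g(u)|^p\,du$, which is \eqref{eq: pvar on Lx}.

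No step here is a genuine obstacle: the analytic difficulty is entirely contained in Proposition~\ref{prop: translation}, which we assume.
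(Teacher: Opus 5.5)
Your proposal is correct and follows essentially the same route as the paper: uniqueness of $g=y/\bar x$ from $\inf\bar x>0$ gives a linear bijection, the isometry holds by definition of $\|\cdot\|_{\mathcal L}$, and completeness transfers from $B$. As in the paper, the variation formula is Proposition~\ref{prop: translation} applied to $g\bar x$, using $[\bar x]^{(p)}_{\T}(t)=C_p t$.
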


\begin{proof}
    Every $y\in\mathcal L_{\bar x}(B)$ can be written as $y=g\bar x$ for some $g\in B$ by definition. Since $\inf\bar x>0$, this representation is unique and $g=y/\bar x$. Thus, the map $T(g)=g\bar x$ is bijective and linear.
    
    For $g\in B$, we have by definition \eqref{eq: Lx norm} that
    \[
        \|T(g)\|_{\mathcal L}=\Big\|\frac{g\bar x}{\bar x}\Big\|_{B} = \|g\|_{B},
    \]
    so $T$ is an isometry. Since $B$ is complete, $\mathcal L_{\bar x}(B)$ is complete as well.

    Finally, fix $g\in B \subset V^p_{\T}$. Since we have $[g]^{(p)}_{\T}\equiv 0$ and $\bar x\in V^p_{\T}$ from \eqref{eq: linear variation x bar}, applying Proposition~\ref{prop: translation} to the product $y=g\bar x$ yields \eqref{eq: pvar on Lx}.
\end{proof}

For $p>1$, it is easy to check that $V^{p,0}_\T$ is a vector space. In these terms, Theorem~\ref{thm: transported Banach} shows that every Banach space $B\subset V^{p,0}_\T$ can be transported isometrically into $V^p_{\T}$ by multiplication with the fixed positive reference path $\bar x$, and that the $p$-th variation on the transported space $\mathcal L_{\bar x}(B)$ is given explicitly by \eqref{eq: pvar on Lx}.

Since the map $T$ in Theorem~\ref{thm: transported Banach} is a linear isometric isomorphism, basic Banach space properties of $B$ are preserved by the transport, as in the following corollary.
\begin{corollary}   \label{cor: inherited properties}
    In the setting of Theorem~\ref{thm: transported Banach}, $\mathcal L_{\bar x}(B)$ is separable (resp.\ reflexive) if and only if $B$ is separable (resp.\ reflexive).
\end{corollary}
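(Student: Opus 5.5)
The plan is to reduce both properties to the fact, established in Theorem~\ref{thm: transported Banach}, that $T\colon B\to\mathcal L_{\bar x}(B)$, $T(g)=g\bar x$, is a surjective linear isometry. Separability and reflexivity are both invariant under isomorphisms of normed spaces, and isometries in particular. It therefore suffices to verify each invariance for $T$ and for its inverse $T^{-1}(y)=y/\bar x$. The inverse is itself a linear isometry, since $\|T^{-1}y\|_B=\|y\|_{\mathcal L}$ by \eqref{eq: Lx norm}.

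For separability, suppose $D\subset B$ is countable and dense. Then $T(D)$ is countable, and for any $y=T(g)$ and $\varepsilon>0$ we can pick $d\in D$ with $\|g-d\|_B<\varepsilon$. This gives $\|y-T(d)\|_{\mathcal L}=\|g-d\|_B<\varepsilon$, so $T(D)$ is dense in $\mathcal L_{\bar x}(B)$. The converse is the same argument applied to $T^{-1}$.

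For reflexivity, I would use the adjoint maps. The adjoint $T^*\colon\mathcal L_{\bar x}(B)^*\to B^*$, given by $T^*\varphi=\varphi\circ T$, is again a surjective linear isometry, with inverse $(T^{-1})^*$. Consequently $T^{**}\colon B^{**}\to\mathcal L_{\bar x}(B)^{**}$ is one as well. Writing $J_B$ and $J_{\mathcal L}$ for the canonical embeddings, a direct evaluation on functionals gives the naturality identity
\[
T^{**}\circ J_B = J_{\mathcal L}\circ T .
\]
If $J_B$ is surjective, then $J_{\mathcal L}=T^{**}J_BT^{-1}$ is a composition of surjections, hence surjective. So $\mathcal L_{\bar x}(B)$ is reflexive, and the converse follows by symmetry.

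There is no real obstacle here. The only point needing care is the naturality identity, which is a one-line check: for $\varphi\in\mathcal L_{\bar x}(B)^*$ we have $(T^{**}J_Bg)(\varphi)=(J_Bg)(T^*\varphi)=\varphi(Tg)=(J_{\mathcal L}Tg)(\varphi)$. Alternatively, one could simply cite the standard fact that reflexivity is preserved under isomorphisms of Banach spaces.
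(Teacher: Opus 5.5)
Your proposal is correct and follows the paper's approach: the paper deduces the corollary in one line from the fact that $T(g)=g\bar x$ is a linear isometric isomorphism $B\to\mathcal L_{\bar x}(B)$. You additionally write out the standard checks the paper leaves implicit, namely that $T$ maps a countable dense set to a countable dense set, and the naturality identity $T^{**}\circ J_B=J_{\mathcal L}\circ T$, which carries surjectivity of the canonical embedding across.
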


We now give some concrete and familiar choices of the source Banach space $(B,\|\cdot\|_B)$ satisfying \eqref{con: Banach B}, thereby producing explicit Banach subspaces of $V^p_{\T}$ through Theorem~\ref{thm: transported Banach}.

\begin{example} \label{ex: Banach examples}
    For any $p>1$, each of the following Banach spaces $(B,\|\cdot\|_B)$ satisfies \eqref{con: Banach B}; hence Theorem~\ref{thm: transported Banach} applies.
    \begin{enumerate}[(i)]
        \item \textbf{H\"older spaces.}
        For any $\alpha> \frac 1p$, let $B = C^\alpha([0,1])$ equipped with the H\"older norm $\|\cdot\|_{C^\alpha}$ (see Lemma~\ref{lem:inclusions} (iii)).
        \item \textbf{One-dimensional Sobolev spaces.}
        For any $r > p' := \frac{p}{p-1}$, let $B = W^{1,r}([0,1])$ equipped with the standard Sobolev norm, identifying each Sobolev class with its continuous representative. In fact, this space is a special case of (i), as $W^{1, r}([0, 1]) \hookrightarrow C^{1-1/r}([0, 1])$.
        \item \textbf{Continuous, bounded variation functions.}
        Let $B = BV([0,1]) \cap C([0,1])$ equipped with the norm $\|g\|_{\infty} + \|g\|_{TV}$, where 
        \[
            \|g\|_{TV} := \sup\Big\{\sum_{i=0}^{m-1}\big|g(t_{i+1})-g(t_i)\big| \,:\, 0=t_0<t_1<\cdots<t_m=1,\ m\in\mathbb{N}\Big\}\in[0,\infty].
        \]
        is the total variation of $g$ on $[0, 1]$.
    \end{enumerate}
    In each case, $\mathcal L_{\bar x}(B)$ is a Banach subspace of $V^p_{\T}$ under the transported norm $\|\cdot\|_{\mathcal L}$ in \eqref{eq: Lx norm}, and we have the explicit representation \eqref{eq: pvar on Lx} of the $p$-th variation for every $y\in\mathcal L_{\bar x}(B)$.
\end{example}

\begin{remark} [An equivalent norm incorporating the $p$-th variation]   \label{rem: equiv norm}
    Under the assumptions of Theorem~\ref{thm: transported Banach}, the identity \eqref{eq: pvar on Lx} gives
    \[
        \big([y]^{(p)}_{\T}(1)\big)^{\frac 1p}=(C_p)^{\frac 1p}\|g\|_{L^p([0,1])} \qquad \text{for every } y = g \bar{x} \in \mathcal L_{\bar x}(B).
    \]
    Hence, if the embedding $B \hookrightarrow L^p([0,1])$ is continuous (as in the three examples of Example~\ref{ex: Banach examples}), then the norm
    \[
        \|y\|_{\mathcal L, p} := \|g\|_{B} + (C_p)^{\frac 1p}\|g\|_{L^p([0,1])}, \qquad \text{where } y=g\bar x\in\mathcal L_{\bar x}(B).
    \]
    is equivalent to $\|\cdot\|_{\mathcal L}$ on $\mathcal L_{\bar x}(B)$.
\end{remark}

\medskip

\subsection{Stability of pathwise It\^o maps on transported subspaces}\label{subsec: stability Ito}

In this subsection, we study stability properties of transported Banach subspaces $\mathcal L_{\bar x}(B)$. Throughout, we work under the setting of Theorem~\ref{thm: transported Banach} and assume in addition that the embedding $B\hookrightarrow L^p([0,1])$ is continuous, as in Example~\ref{ex: Banach examples} and Remark~\ref{rem: equiv norm}. This extra assumption allows us to quantify how the prescribed dyadic $p$-th variation changes under perturbations in the transported norm.

For every $y=g\bar x\in\mathcal L_{\bar x}(B)$, the $p$-th variation measure is absolutely continuous:
\[
    d[y]^{(p)}_{\T}(u)=C_p|g(u)|^p\,du.
\]
Consequently, in any F\"ollmer-type pathwise It\^o formula whose correction term involves integration against $d[y]^{(p)}_{\T}$, that term reduces to a Lebesgue integral with the explicit density $C_p|g|^p$.
For instance, in F\"ollmer's classical quadratic-variation case \cite{follmer1981}, for $f\in C^2(\R)$ one obtains
\begin{align}
    f(y_t) &= f(y_0) + \int_0^t f'(y_u)\,dy_u + \frac12\int_0^t f''(y_u)\,d[y]^{(2)}_{\T}(u) \label{eq: Follmer classical formula}
    \\
    &= f(y_0) + \int_0^t f'(y_u)\,dy_u + \frac{C_2}{2}\int_0^t f''(y_u)|g(u)|^2\,du.    \nonumber
\end{align}
Here, we denote $C^k(\mathbb{R})$ for $k \in \mathbb{N}$ the space of functions that are $k$ times differentiable with continuous $k$-th order derivative.

We first describe stability of the prescribed $p$-th variation mapping on $\mathcal L_{\bar x}(B)$.

\begin{proposition}[Stability of the prescribed $p$-th variation]   \label{prop: Gamma-stability}
    In the setting of Theorem~\ref{thm: transported Banach}, assume that the embedding
    $B\hookrightarrow L^p([0,1])$ is continuous. Consider the mapping
    \begin{equation}    \label{eq: mappping}
        y \mapsto [y]^{(p)}_{\T}
    \end{equation}
    from $(\mathcal L_{\bar x}(B), \|\cdot\|_{\mathcal L})$ to $(C([0,1]), \|\cdot\|_{\infty})$. Then the following statements hold.
    \begin{enumerate}[(i)]
        \item For $y_i=g_i\bar x\in\mathcal L_{\bar x}(B)$, $i=1,2$,
        \begin{equation}\label{eq:Gamma-L1-bound}
            \big\| [y_1]^{(p)}_{\T} - [y_2]^{(p)}_{\T} \big\|_{\infty}
            \le C_p\,\big\||g_1|^p-|g_2|^p\big\|_{L^1}.
        \end{equation}
        \item If $y_n\to y$ in $(\mathcal L_{\bar x}(B),\|\cdot\|_{\mathcal L})$, then
        $[y_n]^{(p)}_{\T} \to [y]^{(p)}_{\T}$ uniformly on $[0,1]$.
        \item The mapping \eqref{eq: mappping} is locally Lipschitz. More precisely, if $K_{B,p}>0$ is such that $\|g\|_{L^p}\le K_{B,p}\|g\|_B$ for $g\in B$, then for $y_i=g_i\bar x$
        \begin{equation}\label{eq:Gamma-local-Lip}
            \big\| [y_1]^{(p)}_{\T} - [y_2]^{(p)}_{\T} \big\|_{\infty}
            \le p C_p K_{B,p}^{p} \Big(\|y_1\|_{\mathcal L}^{p-1}+\|y_2\|_{\mathcal L}^{p-1}\Big)\,\|y_1-y_2\|_{\mathcal L}.
        \end{equation}
        In particular, on each ball $\{y:\ \|y\|_{\mathcal L}\le R\}$, \eqref{eq: mappping} is Lipschitz with constant
        $2pC_p K_{B,p}^p R^{p-1}$.
    \end{enumerate}
\end{proposition}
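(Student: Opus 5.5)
The whole argument rests on the explicit formula \eqref{eq: pvar on Lx} from Theorem~\ref{thm: transported Banach}. It gives, for $y_i=g_i\bar x$ and every $t\in[0,1]$,
\[
[y_1]^{(p)}_{\T}(t)-[y_2]^{(p)}_{\T}(t)=C_p\int_0^t\big(|g_1(u)|^p-|g_2(u)|^p\big)\,du .
\]
Bounding the integrand by its absolute value and extending the integral to $[0,1]$ gives a bound that is uniform in $t$. Taking the supremum over $t$ yields \eqref{eq:Gamma-L1-bound}, which is part (i).

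For part (iii) I would combine (i) with a pointwise elementary inequality. For $a,b\in\R$ and $p>1$, the mean value theorem applied to $s\mapsto s^p$ on $[\,|b|,|a|\,]$ gives
\[
\big||a|^p-|b|^p\big|\le p\max(|a|,|b|)^{p-1}\big||a|-|b|\big|\le p\big(|a|^{p-1}+|b|^{p-1}\big)|a-b| .
\]
Set $a=g_1(u)$, $b=g_2(u)$, integrate, and apply H\"older's inequality with exponents $p'=p/(p-1)$ and $p$. Since $\big\||g_i|^{p-1}\big\|_{L^{p'}}=\|g_i\|_{L^p}^{p-1}$, this gives
\[
\big\||g_1|^p-|g_2|^p\big\|_{L^1}\le p\big(\|g_1\|_{L^p}^{p-1}+\|g_2\|_{L^p}^{p-1}\big)\|g_1-g_2\|_{L^p}.
\]
Next I would use the continuous embedding bound $\|g\|_{L^p}\le K_{B,p}\|g\|_B$ on each of the three factors; this produces the power $K_{B,p}^{p}$. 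Then I would use the isometry $\|g_i\|_B=\|y_i\|_{\mathcal L}$. The linearity of $T$ gives $g_1-g_2=(y_1-y_2)/\bar x$, so $\|g_1-g_2\|_B=\|y_1-y_2\|_{\mathcal L}$. Together these yield \eqref{eq:Gamma-local-Lip}. On the ball of radius $R$ the bracket is at most $2R^{p-1}$, which gives the stated Lipschitz constant $2pC_pK_{B,p}^pR^{p-1}$.

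Part (ii) follows from (iii). A convergent sequence $y_n\to y$ is bounded in $\|\cdot\|_{\mathcal L}$, say by $R$, so the local Lipschitz bound gives $\|[y_n]^{(p)}_{\T}-[y]^{(p)}_{\T}\|_\infty\to0$.

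No step here is a real obstacle. The only points to check are the elementary inequality for non-integer $p>1$ (the mean value argument applied to the absolute values handles this) and the bookkeeping of the exponents of $K_{B,p}$ in the H\"older step.
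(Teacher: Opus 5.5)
Your proposal is correct and follows the paper's proof essentially step by step: (i) comes from the explicit formula \eqref{eq: pvar on Lx}, and (iii) from the elementary inequality $\big||a|^p-|b|^p\big|\le p(|a|^{p-1}+|b|^{p-1})|a-b|$, H\"older's inequality, the embedding constant $K_{B,p}$, and the isometry $\|g_1-g_2\|_B=\|y_1-y_2\|_{\mathcal L}$. The only minor difference is in (ii): you obtain it from the local Lipschitz bound (iii), using that a convergent sequence is bounded, whereas the paper deduces it directly from (i) together with $g_n\to g$ in $L^p([0,1])$; your ordering has the small advantage of actually proving the implication $\|g_n-g\|_{L^p}\to 0\Rightarrow \||g_n|^p-|g|^p\|_{L^1}\to0$, which the paper uses without comment.
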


\begin{proof}
    Let $y_i=g_i\bar x$, $i=1,2$. Using the identity \eqref{eq: pvar on Lx}, we have for all $t\in[0,1]$
    \[
        \big|[y_1]^{(p)}_{\T}(t)-[y_2]^{(p)}_{\T}(t)\big|
        \le C_p\int_0^1 \big||g_1(u)|^p-|g_2(u)|^p\big|\,du,
    \]
    which yields \eqref{eq:Gamma-L1-bound}.

    For (ii), if $y_n=g_n\bar x\to y=g\bar x$ in $(\mathcal L_{\bar x}(B), \|\cdot\|_{\mathcal L})$, then $g_n\to g$ in $(B, \|\cdot\|_{B})$, thanks to the isometric isomorphism $T$ of Theorem~\ref{thm: transported Banach}. By the assumed continuous embedding $B\hookrightarrow L^p([0,1])$, we have $g_n\to g$ in $L^p([0,1])$, hence $\||g_n|^p-|g|^p\|_{L^1}\to 0$. Therefore, \eqref{eq:Gamma-L1-bound} implies
    $\|[y_n]^{(p)}_{\T}-[y]^{(p)}_{\T}\|_\infty\to 0$.

    For (iii), using the inequality
    \[
        \big||a|^p-|b|^p\big|\le p\big(|a|^{p-1}+|b|^{p-1}\big)|a-b|,
        \qquad a,b\in\R,
    \]
    with H\"older inequality, we obtain
    \[
        \big\||g_1|^p-|g_2|^p\big\|_{L^1}
        \le p\Big(\|g_1\|_{L^p}^{p-1}+\|g_2\|_{L^p}^{p-1}\Big)\|g_1-g_2\|_{L^p}.
    \]
    Finally, by the continuous embedding $B\hookrightarrow L^p([0, 1])$ and $\|y\|_{\mathcal L}=\|g\|_B$ for $y=g\bar x$,
    \[
        \|g_i\|_{L^p}\le K_{B,p}\|g_i\|_B = K_{B,p}\|y_i\|_{\mathcal L},
        \qquad
        \|g_1-g_2\|_{L^p}\le K_{B,p}\|g_1-g_2\|_B = K_{B,p}\|y_1-y_2\|_{\mathcal L}.
    \]
    Combining the estimates yields \eqref{eq:Gamma-local-Lip}.
\end{proof}

When $p$ is an even integer, we have the direct generalization of \eqref{eq: Follmer classical formula} from \cite{perkowski2019}; for $y\in V^p_{\T}$ and $f\in C^p(\mathbb R)$, the pathwise F\"ollmer-It\^o integral is defined as
\begin{equation}    \label{def: Rama Nicolas}
    \int_0^t f'(y_u)dy_u := \lim_{n\to\infty}\sum_{t_i^n<t}\sum_{k=1}^{p-1} \frac{f^{(k)}\big(y(t_i^n)\big)}{k!}\Big(y(t_{i+1}^n\wedge t)-y(t_i^n\wedge t)\Big)^k, \qquad t \in [0, 1],
\end{equation}
whenever the limit exists. Using the pathwise change-of-variable formula of \cite{perkowski2019}, we prove a continuity statement for the pathwise F\"ollmer-It\^o map for even integers $p$ on transported subspaces.

\begin{theorem}[Continuity of the F\"ollmer-It\^o map on $\mathcal L_{\bar x}(B)$ for even $p$] \label{thm: Ito map continuity even p}
    Assume the setting of Theorem~\ref{thm: transported Banach}, where $p\in 2\mathbb N$.
    Suppose in addition that the embeddings $B\hookrightarrow C([0,1])$ and $B\hookrightarrow L^p([0,1])$ are continuous. For $f\in C^p(\mathbb R)$, define the F\"ollmer-It\^o map via \eqref{def: Rama Nicolas}
    \[
        \mathcal I_f:\mathcal L_{\bar x}(B)\to C([0,1]),\qquad
        \mathcal I_f(y)(t):=\int_0^t f'(y_u)\,dy_u.
    \]
    Then $\mathcal I_f$ is continuous with respect to the norm $\|\cdot\|_{\mathcal L}$ on $\mathcal L_{\bar x}(B)$
    and the uniform norm on $C([0,1])$.
\end{theorem}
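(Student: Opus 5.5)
We will not estimate the Riemann-type sums in \eqref{def: Rama Nicolas} directly. Instead we use the pathwise change-of-variable formula of \cite{perkowski2019} to rewrite $\mathcal I_f(y)$ in closed form and then check continuity of each term. For $p\in 2\N$, $y\in V^p_{\T}$ and $f\in C^p(\R)$, that formula states that the limit in \eqref{def: Rama Nicolas} exists and
\[
    \int_0^t f'(y_u)\,dy_u = f(y_t)-f(y_0)-\frac{1}{p!}\int_0^t f^{(p)}(y_u)\,d[y]^{(p)}_{\T}(u), \qquad t\in[0,1].
\]
By Theorem~\ref{thm: transported Banach}, every $y=g\bar x\in\mathcal L_{\bar x}(B)$ lies in $V^p_{\T}$ and satisfies $d[y]^{(p)}_{\T}(u)=C_p|g(u)|^p\,du$. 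Hence
\[
    \mathcal I_f(y)(t)=f(y_t)-f(y_0)-\frac{C_p}{p!}\int_0^t f^{(p)}(y_u)\,|g(u)|^p\,du .
\]
It therefore suffices to show that each term depends continuously on $y$, from $\|\cdot\|_{\mathcal L}$ to the uniform norm.

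\emph{Step 1: uniform convergence of $y_n$.} Take $y_n=g_n\bar x\to y=g\bar x$ in $\mathcal L_{\bar x}(B)$. Then $g_n\to g$ in $B$ by the isometry $T$. The continuous embedding $B\hookrightarrow C([0,1])$ gives $\|g_n-g\|_\infty\to0$, so
\[
    \|y_n-y\|_\infty\le \|\bar x\|_\infty\|g_n-g\|_\infty\to0 .
\]
In particular, all $y_n$ and $y$ take values in a fixed compact interval $K=[-R,R]$.

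\emph{Step 2: the boundary terms.} Since $f$ is uniformly continuous on $K$, we get $\sup_t|f(y_n(t))-f(y(t))|\to0$ and $|f(y_n(0))-f(y(0))|\to0$.

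\emph{Step 3: the correction term.} Split the difference as
\[
    \int_0^t \big(f^{(p)}(y_n)-f^{(p)}(y)\big)|g_n|^p\,du+\int_0^t f^{(p)}(y)\big(|g_n|^p-|g|^p\big)\,du .
\]
For the first piece, $f^{(p)}$ is uniformly continuous on $K$, so $\|f^{(p)}(y_n)-f^{(p)}(y)\|_\infty\to0$. The factor $\|g_n\|_{L^p}^p$ stays bounded because $(g_n)$ converges in $B$ and $B\hookrightarrow L^p$ is continuous. So the first piece tends to $0$ uniformly in $t$.

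For the second piece, bound it by $\sup_K|f^{(p)}|\cdot\||g_n|^p-|g|^p\|_{L^1}$. This tends to $0$ by the estimate in the proof of Proposition~\ref{prop: Gamma-stability}(iii):
\[
    \||g_n|^p-|g|^p\|_{L^1}\le p\big(\|g_n\|_{L^p}^{p-1}+\|g\|_{L^p}^{p-1}\big)\|g_n-g\|_{L^p}.
\]
Both pieces are bounded uniformly in $t\in[0,1]$, which gives uniform convergence of the correction term. Combining Steps 2 and 3 yields $\|\mathcal I_f(y_n)-\mathcal I_f(y)\|_\infty\to0$.

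\emph{Main obstacle.} The delicate point is justifying the closed-form identity. We must confirm that the hypotheses of the change-of-variable theorem of \cite{perkowski2019} match our setting: $y\in V^p_{\T}$ in the sense of Definition~\ref{Def: p-th variation} (continuous limit, uniform convergence), $f\in C^p$, and the exact form of the Taylor sums in \eqref{def: Rama Nicolas}. We must also check that its correction term is $\frac{1}{p!}\int f^{(p)}(y)\,d[y]^{(p)}_{\T}$ for even $p$. Once the formula is in place, everything else is elementary, and the two embedding hypotheses are used exactly in Steps 1 and 3.
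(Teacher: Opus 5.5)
Your proposal is correct and follows the same route as the paper: rewrite $\mathcal I_f(y)$ via the change-of-variable formula of \cite{perkowski2019} with $d[y]^{(p)}_{\T}=C_p|g|^p\,du$, obtain $y_n\to y$ uniformly from $B\hookrightarrow C([0,1])$, and split the correction term into the same two pieces, controlled through $B\hookrightarrow L^p([0,1])$. Your restriction to a compact range $K$ when bounding $f^{(p)}$ is slightly more careful than the paper, which asserts that $f^{(p)}$ is bounded outright; for general $f\in C^p(\R)$ this holds only on compact sets.
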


\begin{proof}
    Let $y_n=g_n\bar x$ and $y=g\bar x$ with $y_n\to y$ in $(\mathcal L_{\bar x}(B),\|\cdot\|_{\mathcal L})$, i.e., $g_n\to g$ in $(B,\|\cdot\|_B)$. By the assumed embedding $B\hookrightarrow C([0,1])$, we have $g_n\to g$ uniformly on $[0,1]$,
    hence also
    \[
        y_n\to y \qquad \text{uniformly on } [0,1],
    \]
    since $\bar x$ is bounded.

    By the pathwise change-of-variable formula \cite{perkowski2019} for even $p$,
    \begin{align*}
        \mathcal I_f(y_n)(t)
        &= f\big(y_n(t)\big)-f\big(y_n(0)\big)
        -\frac{1}{p!}\int_0^t f^{(p)}\big(y_n(u)\big)\,d[y_n]^{(p)}_{\T}(u)
        \\
        &= f\big(y_n(t)\big)-f\big(y_n(0)\big)
        -\frac{C_p}{p!}\int_0^t f^{(p)}\big(y_n(u)\big)\,|g_n(u)|^p\,du, \qquad t \in [0, 1],
    \end{align*}
    and similarly for $\mathcal I_f(y)(t)$. Since $y_n\to y$ uniformly and $f$ is continuous, the term $f(y_n(\cdot))-f(y_n(0))$ converges uniformly to $f(y(\cdot))-f(y(0))$. For the correction term, write for $t\in[0,1]$,
    \begin{align*}
        &\Big|\int_0^t f^{(p)}\big(y_n(u)\big)|g_n(u)|^p\,du
        -\int_0^t f^{(p)}\big(y(u)\big)|g(u)|^p\,du\Big|
        \\
        &\qquad\le
        \int_0^1 \Big|f^{(p)}\big(y_n(u)\big)-f^{(p)}\big(y(u)\big)\Big|\,|g_n(u)|^p\,du
        +\int_0^1 \Big|f^{(p)}\big(y(u)\big)\Big|\,
        \big||g_n(u)|^p-|g(u)|^p\big|\,du.
    \end{align*}
    Since $f^{(p)}$ is bounded and $g_n\to g$ in $B\hookrightarrow L^p([0,1])$, the sequence $\{g_n\}$ is bounded in $L^p([0,1])$. Moreover, $y_n\to y$ uniformly implies $f^{(p)}(y_n)\to f^{(p)}(y)$ uniformly, so the first integral tends to $0$. The second integral also tends to $0$ because $f^{(p)}$ is bounded and $\||g_n|^p-|g|^p\|_{L^1([0,1])}\to 0$ by the $L^p$ convergence of $g_n\to g$. Therefore, the correction terms converge uniformly in $t\in[0,1]$.

    Combining the uniform convergence of both terms yields
    \[
        \|\mathcal I_f(y_n)-\mathcal I_f(y)\|_\infty\to 0.
    \]
\end{proof}

\begin{remark}[The noninteger case] \label{rem: noninteger Ito}
    When $p>1$ is not an integer, a similar continuity result also holds for the fractional pathwise integral of \cite{ruhong2022}, provided one works in the \emph{no-remainder regime}. More precisely, if $p=m+\alpha$ with $m=\lfloor p\rfloor$ and the functions $f$ and $y \in \mathcal L_{\bar x}(B)$ satisfy the assumptions considered in \cite[Theorem~2.6 and Theorem~2.12]{ruhong2022}, then the compensated Riemann sums
    \[
        \sum_{t_i^n<t}\sum_{k=1}^{m} \frac{f^{(k)}\big(y(t_i^n)\big)}{k!}\Big(y(t_{i+1}^n\wedge t)-y(t_i^n\wedge t)\Big)^k
    \]
    converge to a pathwise integral $\int_0^t f'(y_u)\,dy_u$ and satisfy a change-of-variable formula without an additional $p$-th variation remainder term. In that case, continuity of the associated pathwise It\^o map on $\mathcal L_{\bar x}(B)$ follows by an even simpler argument based only on the uniform convergence $y_n\to y$: for each $t \in[0, 1]$,
    \[
        \int_0^t f'\big(y_n(u)\big)dy_n(u) = f\big(y_n(t)\big)-f\big(y_n(0)\big) \xlongrightarrow{n \to \infty} f\big(y(t)\big)-f\big(y(0)\big) = \int_0^t f'\big(y(u)\big)dy(u).
    \]
\end{remark}

\bigskip

\section{Extensions to more general refining partition sequences}   \label{sec: extention to general partitions}

For simplicity of exposition, we have worked throughout with the dyadic partition sequence until now, but all the results of the previous sections can be extended to a more general class of partition sequences, namely $q$-refining partition sequences (see Definition~\ref{Def: q-adically refining}).

In Section~\ref{subsec: q-adic-linear-pvar}, we first explore the $q$-adic partition sequence, when a parent interval at level $n$ is equally divided into $q$ subintervals of length $1/q^{n+1}$ at level $n+1$, for any fixed integer $q\ge2$.

We next apply the time-change technique to those $q$-adic partition sequences to obtain $q$-refining partition sequences in Section~\ref{subsec:time-change}.

\medskip

\subsection{The \texorpdfstring{$q$}{TEXT}-adic partition sequences}    \label{subsec: q-adic-linear-pvar}

We first introduce notation for $q$-adic partition sequences and the corresponding Haar/Schauder functions. Fix an integer $q\ge2$, and consider the $q$-adic partition sequence
\begin{equation}    \label{def: q-adic partition}
    \T_q=(\T_q^n)_{n\ge0},
    \qquad
    \T_q^n:=\Big\{0,\frac1{q^n},\frac2{q^n},\dots,1\Big\}.
\end{equation}
For $m\ge0$ and $k=0,\dots,q^m-1$, write
\[
    I_{m,k}:=\Big[\frac{k}{q^m},\frac{k+1}{q^m}\Big),
\]
and denote by
\[
    I_{m+1,qk+d}, \qquad d=0,\dots,q-1,
\]
the $q$ children of $I_{m,k}$.

For $\ell=1,\dots,q-1$ and $d=0,\dots,q-1$, define
\[
    \gamma_{\ell,d}:=
    \begin{cases}
        \displaystyle \sqrt{\frac{q}{\ell(\ell+1)}}, & 0\le d\le \ell-1,
        \\[1.2ex]
        \displaystyle -\sqrt{\frac{q\ell}{\ell+1}}, & d=\ell,
        \\[1.2ex]
        ~~~~~0, & d\ge \ell+1.
    \end{cases}
\]
From the definitions of generalized Haar/Schauder functions for a general class of partition sequences \cite[Definitions~3.6, 3.7]{das-kim2024}, the associated $q$-adic Haar functions are given by
\begin{equation}    \label{def: q-adic Haar}
    \psi_{m,k,\ell}(t) := q^{m/2}\sum_{d=0}^{q-1}\gamma_{\ell,d}\,\mathbbm{1}_{I_{m+1,qk+d}}(t), \qquad t \in [0,1],
\end{equation}
and the corresponding generalized Schauder functions are
\[
    e_{m,k,\ell}(t):=\int_0^t \psi_{m,k,\ell}(s)\,ds,
    \qquad t\in[0,1].
\]

Let $a=(a_1,\dots,a_{q-1})\in\mathbb R^{q-1}\setminus\{0\}$ be fixed, and define
\begin{equation}    \label{def: eta_d}
    \eta_d(a):=\sum_{\ell=1}^{q-1} a_\ell \gamma_{\ell,d}, \qquad d=0,\dots,q-1.
\end{equation}

The key result in this subsection is to generalize Theorem~\ref{thm: xi implies p-th variation}, i.e., the construction of a reference path with linear $p$-th variation, to the $q$-adic setting; this is carried out in Theorem~\ref{thm:q-adic-linear-pvar} below. Once this is available, the later subsections extend naturally with only minor modifications, since their arguments rely mainly on the existence of a reference path with linear $p$-th variation along the chosen partition sequence, together with multiplicative transport and stability properties that are not specific to the dyadic case.

The following condition \eqref{con: uniform mag q-adic} is the natural $q$-adic analogue of Assumption~\ref{assum: uniform mag}, the uniform magnitude condition from Section~\ref{subsec: paths with linear p-th variation}. Namely, we require that at each level $m$, the dependence of the generalized Schauder coefficients on the spatial index $k$ is suppressed, while their dependence on the branch index $\ell$ is given by a fixed vector $a=(a_1,\dots,a_{q-1})\in\mathbb R^{q-1}$. Thus, the size at level $m$ is governed by a single scalar sequence $(c_m)_{m\ge0}$, and the relative weights of the $q-1$ generalized Schauder functions inside each parent interval are kept fixed across all levels and locations.

For a nonnegative sequence $(c_m)_{m\ge0}$, define coefficients
\begin{equation}    \label{con: uniform mag q-adic}
    \theta_{m,k,\ell}:=c_m a_\ell,
    \qquad m\ge0,\quad k=0,\dots,q^m-1,\quad \ell=1,\dots,q-1,
\end{equation}
and consider the series
\begin{equation}\label{eq:q-adic-series}
    x(t):=\sum_{m=0}^{\infty}\sum_{k=0}^{q^m-1}\sum_{\ell=1}^{q-1}\theta_{m,k,\ell}\,e_{m,k,\ell}(t),
    \qquad t\in[0,1].
\end{equation}
This assumption \eqref{con: uniform mag q-adic} is stronger than merely requiring a uniform magnitude condition at each level, but it is precisely what allows the contribution from level $m$ to be expressed through a single digit-dependent quantity $\eta_{d}(a)$ in the increment formula below.

Finally, define
\begin{equation}    \label{def: y_m q-adic}
    \xi_m^{(p,q)} := q^{m(1-\frac p2)}c_m^p, \qquad y_m:=q^{m(\frac{1}{p}-\frac{1}{2})}c_m=(\xi_m^{(p,q)})^{\frac 1p},
\end{equation}
and
\begin{equation}    \label{def: rho_q}
    \rho_q := q^{-(1-\frac 1p)}.
\end{equation}

We now present the $q$-adic analogue of Theorem~\ref{thm: xi implies p-th variation}.

\begin{theorem} \label{thm:q-adic-linear-pvar}
    Fix $p>1$, an integer $q \ge 2$, and $a\in\mathbb R^{q-1}\setminus\{0\}$. Assume that the sequence $(\xi_m^{(p,q)})_{m\ge0}$ converges as $m\to\infty$. Then the series \eqref{eq:q-adic-series} converges uniformly on $[0,1]$ to a continuous function $x$. Moreover, $x$ has linear $p$-th variation along the $q$-adic partition sequence $\T_q$, namely
    \[
        [x]^{(p)}_{\T_q}(t)
        =
        C_{p,q,a}\,t\,\lim_{m\to\infty}\xi_m^{(p,q)},
        \qquad t\in[0,1],
    \]
    where
    \[
        C_{p,q,a} := \mathbb E\Bigg[ \bigg| \sum_{j=1}^{\infty}\rho_q^j\,\eta_{D_j}(a) \bigg|^p \Bigg] \in(0, \infty),
    \]
    and $(D_j)_{j\ge1}$ are i.i.d.\ random variables uniformly distributed on $\{0,\dots,q-1\}$.
\end{theorem}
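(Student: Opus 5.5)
We will compute the increments of $x$ over a level-$n$ cell $J=[j/q^n,(j+1)/q^n]$ explicitly and then reduce the $p$-th variation to an empirical average that converges by an equidistribution argument.

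\emph{Increment formula.} For $m\ge n$, each $e_{m,k,\ell}$ with $I_{m,k}\subset J$ vanishes at both endpoints of $J$, since $\sum_d\gamma_{\ell,d}=0$. Hence only levels $m<n$ contribute to $x((j+1)/q^n)-x(j/q^n)$. For $m<n$, exactly one $k$ has $J\subset I_{m,k}$, and on $J$ the function $\psi_{m,k,\ell}$ is the constant $q^{m/2}\gamma_{\ell,D_{m+1}}$. Here $j=\sum_{i=1}^n D_i q^{n-i}$ is the base-$q$ digit expansion of $j$. Summing over $\ell$ gives
\[
    \Delta_j^n := x\Big(\tfrac{j+1}{q^n}\Big)-x\Big(\tfrac{j}{q^n}\Big)=q^{-n}\sum_{m=0}^{n-1} c_m q^{m/2}\,\eta_{D_{m+1}}(a).
\]
Writing $c_m q^{m/2}=y_m q^{m(1-1/p)}$ and setting $i=n-m$, we obtain
\[
    q^{n/p}\Delta_j^n=\sum_{i=1}^{n} y_{n-i}\,\rho_q^{\,i}\,\eta_{D_{n-i+1}}(a).
\]

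\emph{Uniform convergence.} Because $(y_m)$ converges, it is bounded, and $\rho_q<1$. So $|\Delta^n_j|\lesssim q^{-n/p}$. The same bound holds for the partial sums on each cell, since an $e_{m,k,\ell}$ has sup norm of order $q^{-m/2}$. Summing level by level, the tail beyond level $N$ is $O(q^{-N/p})$, which gives uniform convergence and, as in Lemma~\ref{lem: Ciesielski}, $\frac1p$-Hölder continuity.

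\emph{Limit of the $p$-th variation.} Let $y=\lim y_m$. Then
\[
    [x]^{(p)}_{\T_q^n}(t)\approx q^{-n}\sum_{j< tq^n}\Big|\sum_{i=1}^n y_{n-i}\rho_q^i\eta_{D_{n-i+1}(j)}\Big|^p,
\]
and the boundary cell is negligible because each term is $O(q^{-n})$. We replace $y_{n-i}$ by $y$ with a small error. For $i\le L$ with $L$ fixed, $y_{n-i}\to y$. For $i>L$, the contribution is bounded by $C\rho_q^L$ uniformly. Since $|\cdot|^p$ is locally Lipschitz on bounded sets, these truncations perturb the average by $o(1)+O(\rho_q^L)$. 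Up to that error, the summand is a fixed continuous function $F_L$ of the last $L$ digits $(D_n,\dots,D_{n-L+1})$ of $j$, with reversed indexing so that $i$ corresponds to $D_{n-i+1}$. The key counting fact is this: for $t=r/q^{s}$ a $q$-adic rational and $n\ge s+L$, as $j$ ranges over $[0,tq^n)$, the last $L$ digits are exactly uniformly distributed, each block $[kq^{n-s},(k+1)q^{n-s})$ containing every $L$-digit pattern equally often. Hence
\[
    q^{-n}\sum_{j<tq^n}F_L=t\,\mathbb E\Big[\big|y\textstyle\sum_{i\le L}\rho_q^i\eta_{D_i}\big|^p\Big].
\]
Letting $n\to\infty$ and then $L\to\infty$ gives $y^p\,t\,C_{p,q,a}$ at $q$-adic rationals. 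Monotonicity in $t$ and continuity of the limit then extend the identity to all $t\in[0,1]$. Since $y^p=\lim\xi_m^{(p,q)}$, this is the claimed formula.

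\emph{Main obstacle: positivity and finiteness of $C_{p,q,a}$.} Finiteness is clear, since the series is bounded by $\max_d|\eta_d|\,\rho_q/(1-\rho_q)$. For positivity, the vectors $(\gamma_{\ell,\cdot})_{\ell}$ are linearly independent, being orthogonal Helmert-type vectors, so $a\neq0$ implies $\eta(a)\ne0$. Since $\sum_d\eta_d=0$, there are digits with $\eta_{d}>0$ and $\eta_{d'}<0$. One would like to show that the random sum $S=\sum_j\rho_q^j\eta_{D_j}$ is not almost surely zero. Condition on $D_2,D_3,\dots$; then $S=\rho_q\eta_{D_1}+R$, and two different values of $\eta_{D_1}$ yield two different values of $S$. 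Each value has conditional probability at least $1/q$, so at least one of them is nonzero with positive probability. Therefore $\mathbb E|S|^p>0$. The truncation and equidistribution step above is the more delicate part, but I expect it to go through as sketched.
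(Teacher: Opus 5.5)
Your proposal is correct and follows essentially the same route as the paper. Both use the digit representation $q^{n/p}\Delta^n_j=\sum_{i=1}^n\rho_q^{\,i}\,y_{n-i}\,\eta_{D_{n-i+1}}(a)$ (you index digits most-significant-first where the paper uses least-significant-first), combine the exponential decay of the weights $\rho_q^{\,i}$ with exact equidistribution of low-order digit patterns in $q$-adic blocks, and pass from $q$-adic rationals to all $t$ by a monotone sandwich.

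The differences are only organizational. Truncating to the last $L$ digits, with $y_{n-i}$ replaced by $y$, lets you treat every $q$-adic rational at once, whereas the paper first computes $t=1$ as an expectation and then proves block equipartition. Your conditioning on $(D_j)_{j\ge 2}$ gives an explicit proof of $C_{p,q,a}>0$, which the paper only asserts from $\eta(a)\neq 0$.
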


\medskip

\subsection{Time-changes and \texorpdfstring{$q$}{TEXT}-refining dense partition sequences}
\label{subsec:time-change}

We now use time-changes to transport the $q$-adic partitions to more general partitions. For the present purpose, it is enough to work with increasing homeomorphisms of $[0,1]$.

For a general partition sequence $\pi=(\pi^n)_{n\ge0}$, where
\[
    \pi^n=\{0=t_0^n<\cdots<t_{N(\pi^n)}^n=1\},
\]
we define, in analogy with Definition~\ref{Def: p-th variation},
\[
    [x]_{\pi^n}^{(p)}(t) := \sum_{i=0}^{N(\pi^n)-1}\big|x(t_{i+1}^n\wedge t)-x(t_i^n\wedge t)\big|^p,\qquad t\in[0,1].
\]
If $[x]_{\pi^n}^{(p)}(t)$ converges for every $t\in[0,1]$, we denote the limit by $[x]_\pi^{(p)}(t)$ and write $x\in V_\pi^p$.

The next proposition provides the basic time-change principle. We pull back the partition points by $\phi^{-1}$, and correspondingly push forward the path by composition with $\phi$.

\begin{lemma}   \label{lem:time-change-general}
    Let $\pi=(\pi^n)_{n\ge0}$ be a partition sequence on $[0,1]$, and let $\phi:[0,1]\to[0,1]$ be an increasing homeomorphism. Define
    \[
        \pi^n(\phi):=\phi^{-1}(\pi^n)
        =\{\phi^{-1}(t_i^n):\, i=0,\dots,N(\pi^n)\},\qquad n\ge0.
    \]
    If $x\in V_\pi^p$, then $x\circ\phi\in V_{\pi(\phi)}^p$ and $[x\circ\phi]_{\pi(\phi)}^{(p)}(t)=[x]_\pi^{(p)}(\phi(t))$ for $t\in[0,1]$. Moreover, if $x\in C^\alpha([0,1])$ and $\phi\in C^\beta([0,1])$ for some $\alpha,\beta\in(0,1]$, then $x\circ\phi\in C^{\alpha\beta}([0,1])$.
\end{lemma}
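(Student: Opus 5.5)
The plan is to verify the identity directly at the level of the discrete sums, since the time change simply relabels the partition points. Fix $n$ and write $s_i^n:=\phi^{-1}(t_i^n)$, so that $\pi^n(\phi)=\{0=s_0^n<\cdots<s^n_{N(\pi^n)}=1\}$. This is a genuine partition of $[0,1]$ because $\phi^{-1}$ is an increasing homeomorphism fixing $0$ and $1$. For $t\in[0,1]$, monotonicity of $\phi$ gives $\phi(s_i^n\wedge t)=\phi(s_i^n)\wedge\phi(t)=t_i^n\wedge\phi(t)$. Hence, term by term,
\[
    [x\circ\phi]^{(p)}_{\pi^n(\phi)}(t)=\sum_{i=0}^{N(\pi^n)-1}\big|x(t_{i+1}^n\wedge\phi(t))-x(t_i^n\wedge\phi(t))\big|^p=[x]^{(p)}_{\pi^n}(\phi(t)).
\]
This is an exact identity for every $n$ and every $t$.

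Letting $n\to\infty$, the hypothesis $x\in V^p_\pi$ says that the right-hand side converges for every $\phi(t)\in[0,1]$. Therefore the left-hand side converges for every $t$, with limit $[x]^{(p)}_\pi(\phi(t))$. This gives $x\circ\phi\in V^p_{\pi(\phi)}$ together with the stated formula. If one also wants continuity of the limit, as in Definition~\ref{Def: p-th variation}, note that $[x]^{(p)}_\pi\circ\phi$ is continuous whenever $[x]^{(p)}_\pi$ is. Uniformity of the convergence also transfers, because $\phi$ is a bijection of $[0,1]$, so the supremum over $t$ is the same on both sides.

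For the H\"older claim, take $t\neq s$ and estimate
\[
    |x(\phi(t))-x(\phi(s))|\le[x]_{C^\alpha}|\phi(t)-\phi(s)|^\alpha\le[x]_{C^\alpha}[\phi]_{C^\beta}^\alpha|t-s|^{\alpha\beta},
\]
where $[\cdot]_{C^\gamma}$ denotes the H\"older seminorm. This shows $x\circ\phi\in C^{\alpha\beta}([0,1])$.

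I do not expect a genuine obstacle here. The only point needing care is the identity $\phi(s\wedge t)=\phi(s)\wedge\phi(t)$, which uses that $\phi$ is increasing; it would fail for a decreasing homeomorphism. One should also check that $\phi^{-1}$ maps endpoints to endpoints, so that $\pi(\phi)$ is admissible.
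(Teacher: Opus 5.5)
Your proposal is correct and follows essentially the same route as the paper: the identity $\phi(s_i^n\wedge t)=t_i^n\wedge\phi(t)$ gives the exact equality $[x\circ\phi]^{(p)}_{\pi^n(\phi)}(t)=[x]^{(p)}_{\pi^n}(\phi(t))$ for every $n$, one then passes to the limit, and the H\"older claim follows from the same composition estimate. Your use of the H\"older seminorm instead of the full norm $\|\cdot\|_{C^\alpha}$, and your remarks that continuity and uniformity of the limit carry over, are small refinements of the paper's argument.
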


\begin{proof}
For $\pi^n=\{0=t_0^n<\cdots<t_{N(\pi^n)}^n=1\}$, set $s_i^n:=\phi^{-1}(t_i^n)$. Since $\phi$ is increasing, for every $t\in[0,1]$,
\[
    \phi(s_i^n\wedge t)=\phi(s_i^n)\wedge \phi(t)=t_i^n\wedge \phi(t).
\]
Hence
\begin{align*}
    [x\circ\phi]_{\pi^n(\phi)}^{(p)}(t)
    &=\sum_{i=0}^{N(\pi^n)-1}\big|x(\phi(s_{i+1}^n\wedge t))-x(\phi(s_i^n\wedge t))\big|^p
    \\
    &=\sum_{i=0}^{N(\pi^n)-1}\big|x(t_{i+1}^n\wedge \phi(t))-x(t_i^n\wedge \phi(t))\big|^p = [x]_{\pi^n}^{(p)}(\phi(t)).
\end{align*}
Taking $n\to\infty$ yields the first claim. The H\"older statement is immediate from
\[
    |x(\phi(t))-x(\phi(s))|
    \le \|x\|_{C^\alpha} |\phi(t)-\phi(s)|^\alpha
    \le \|x\|_{C^\alpha}\|\phi\|_{C^\beta}^\alpha |t-s|^{\alpha\beta}. \qedhere
\]
\end{proof}

We next specialize this time-change principle to partition sequences that are $q$-refining.

\begin{definition}  \label{Def: q-adically refining}
    Fix an integer $q\ge2$. A partition sequence $\P=(\P^n)_{n\ge0}$ on $[0,1]$ is called \emph{$q$-refining} if, for each $n\ge0$,
    \[
        \P^n=\{0=t_0^n<t_1^n<\cdots<t_{q^n}^n=1\}, \qquad \text{and} \qquad
        t_i^n=t_{qi}^{n+1} \quad \text{for all } i = 0, \cdots, q^n.
    \]
    We call $\P$ \emph{dense} if $\bigcup_{n\ge0}\P^n$ is dense in $[0,1]$.
\end{definition}
Note that any $q$-adic partition $\T_q$ in \eqref{def: q-adic partition} is $q$-refining and dense. The next result shows that there is a unique time-change (homeomorphism) between any dense $q$-refining partition sequence and the $q$-adic partition sequence $\T_q$.

\begin{proposition} \label{prop:q-refining-homeomorphism}
    Let $q\ge2$ and let $\P=(\P^n)_{n\ge0}$ be a dense $q$-refining partition sequence. Then there exists a unique increasing homeomorphism $\phi:[0,1]\to[0,1]$ such that
    \[
        \phi(t_i^n)=\frac{i}{q^n},\qquad i=0,\dots,q^n,\ \ n\ge0.
    \]
    Equivalently, $\P^n=\phi^{-1}(\T_q^n)$, for $n\ge0$.
\end{proposition}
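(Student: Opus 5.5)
The plan is to define $\phi$ on the set of partition points $D:=\bigcup_{n\ge0}\P^n$ by $\phi(t^n_i):=i/q^n$, and then extend it by monotonicity and continuity. First I check that this is well defined. By the $q$-refining property $t^n_i=t^{n+1}_{qi}$, and $i/q^n=qi/q^{n+1}$. Iterating, a point $t^n_i=t^{n+k}_{q^k i}$ receives the same value at every level. Conversely, if $t^n_i=t^{m}_j$ with $m\ge n$, then $t^n_i=t^m_{q^{m-n}i}$, and strict monotonicity of the level-$m$ points forces $j=q^{m-n}i$. Hence $\phi$ is a well-defined function on $D$. It is strictly increasing on $D$, because any two points of $D$ can be written at a common finer level $m$, where both the $t^m_j$ and the $j/q^m$ are strictly increasing in $j$. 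Its image is exactly the set of $q$-adic rationals $\bigcup_n \T_q^n$, which is dense in $[0,1]$.

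Next I extend $\phi$ to $[0,1]$ by setting $\phi(t):=\sup\{\phi(s): s\in D,\ s\le t\}$. The result is non-decreasing and agrees with the original definition on $D$.

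\textbf{Continuity.} A monotone function on $[0,1]$ is continuous iff it has no jumps. A jump at some $t$ would leave a nondegenerate open interval of values missed by $\phi$. That interval would contain a $q$-adic rational, yet every $q$-adic rational is attained on $D$. Hence $\phi$ has no jumps and is continuous.

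\textbf{Strict monotonicity.} For $s<t$, density of $D$ gives two points $d_1<d_2$ in $D\cap(s,t)$. Then $\phi(s)\le\phi(d_1)<\phi(d_2)\le\phi(t)$.

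A continuous, strictly increasing map with $\phi(0)=0$ and $\phi(1)=1$ is an increasing homeomorphism of $[0,1]$. The identity $\P^n=\phi^{-1}(\T^n_q)$ then holds because $\phi$ is a bijection with $\phi(t^n_i)=i/q^n$.

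\textbf{Uniqueness.} Any two continuous maps satisfying the prescribed values agree on the dense set $D$, hence everywhere.

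The only step that needs real care is the well-definedness and consistency on $D$. It relies on strict monotonicity within each level together with the nesting $t^n_i=t^{n+1}_{qi}$. Density of $D$ enters only in the strict monotonicity and uniqueness steps, while continuity comes from density of the image $\bigcup_n\T^n_q$.
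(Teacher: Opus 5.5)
Your proposal is correct and follows essentially the same route as the paper: define $\phi$ on $D=\bigcup_n\P^n$, extend it by the supremum, get strict monotonicity from the density of $D$ and continuity from the density of the $q$-adic rationals in the image, and deduce uniqueness from density. Your explicit check that $\phi$ is well defined and strictly increasing on $D$ (via $t^n_i=t^m_{q^{m-n}i}$ and strict ordering within each level) fills in a step the paper only asserts.
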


\begin{proof}
Set $D:=\bigcup_{n\ge0}\P^n$ and define $\phi_0:D\to[0,1]$ by $\phi_0(t_i^n):=i/q^n$.
The $q$-adic refining property makes $\phi_0$ well defined, and $\phi_0$ is strictly increasing on $D$. Moreover,
    \[
    \phi_0(D)=\bigcup_{n\ge0}\T_q^n,
    \]
    which is dense in $[0,1]$.
    
    Define
    \[
    \phi(t):=\sup\{\phi_0(s):\, s\in D,\ s\le t\},\qquad t\in[0,1].
    \]
    Then $\phi$ is non-decreasing and extends $\phi_0$. If $s<t$, choose $u,v\in D$ such that $s<u<v<t$, which is possible because $D$ is dense. Then
    \[
    \phi(s)\le \phi(u)<\phi(v)\le \phi(t),
    \]
    so $\phi$ is strictly increasing.
    
Since every monotone function can only have jump discontinuities, suppose that $\phi$ is discontinuous at some $t\in[0,1]$. Then, at least one of the intervals
    \[
        \big(\phi(t-),\phi(t)\big) \qquad \text{or} \qquad \big(\phi(t),\phi(t+)\big)
    \]
    is nonempty (with the obvious one-sided interpretation at the endpoints). In either case, that interval is disjoint from $\phi([0,1])$. Thus, there exists a nonempty open interval $J\subset[0,1]$ such that $J\cap \phi([0,1])=\varnothing$. However, $\phi$ extends $\phi_0$, and
    \[
        \phi_0(D)=\bigcup_{n\ge0}\T_q^n
    \]
    is dense in $[0,1]$, so every nonempty open interval of $[0,1]$ must meet $\phi_0(D)\subset \phi([0,1])$, a contradiction. Therefore $\phi$ is continuous.
    
    Thus $\phi$ is a continuous strictly increasing map from $[0,1]$ onto $[0,1]$, hence an increasing homeomorphism. Uniqueness follows from continuity and the density of $D$. The identity $\phi(t_i^n)=i/q^n$ gives
    \[
        t_i^n=\phi^{-1}\Big(\frac{i}{q^n}\Big),
    \]
    and therefore $\P^n=\phi^{-1}(\T_q^n)$.
\end{proof}

\begin{remark}
If the dense $q$-refining partition sequence $\mathbb{P}$ is balanced (see Definition~2.1 of \cite{das2020}), the associated increasing homeomorphism $\phi$ and its inverse $\phi^{-1}$ from Proposition \ref{prop:q-refining-homeomorphism} are Lipschitz. To see this, we first restrict ourselves on $\cup_n \P^n$, as other points can be shown using the dense property of $\P$. Now take any $s,t\in \cup_n \P^n$, then there exists a large enough $n$ such that $t= t^n_\ell$ and $s=t^n_k$ for some $\ell$ and $k$. Hence, there exists some $c>0$ such that
\begin{align*}
|\phi(t)- \phi(s)| &=  |\phi(t^n_\ell)- \phi(t^n_k)| = |\frac{\ell}{q^n}- \frac{k}{q^n}| =\frac{1}{N(\P^n)} |\ell-k|\\&\leq  \big(\max_i |t^n_{i+1}-t^n_i|\big)|\ell-k| \leq c \big(\min_i |t^n_{i+1}-t^n_i|\big)|\ell-k| \leq  c|t^n_\ell - t^n_k| = c|t-s|.
\end{align*}
Here, the second inequality follows from the balanced property, and $N(\P^n)$ denotes the number of partition points of $\P^n$. Since $\T_q$ is also a balanced partition sequence, with the same line of argument one also has $\phi^{-1}$ Lipschitz.
\end{remark}

Combining Lemma~\ref{lem:time-change-general} and Proposition~\ref{prop:q-refining-homeomorphism}, we can transport the $q$-adic constructions to every dense $q$-refining partition sequence.

\begin{corollary}\label{cor:time-change-transport}
    Let $q\ge2$, let $\P$ be a dense $q$-refining partition sequence, and let $\phi$ be the associated homeomorphism from Proposition~\ref{prop:q-refining-homeomorphism}. If $x\in V_{\T_q}^p$ satisfies
    \[
        [x]_{\T_q}^{(p)}(t)=h(t),\qquad t\in[0,1],
    \]
    for some continuous non-decreasing function $h$ with $h(0)=0$, then $y:=x\circ\phi\in V_\P^p$ and
    \[
        [y]_\P^{(p)}(t)=h(\phi(t)),\qquad t\in[0,1].
    \]
    Equivalently, if $H:[0,1]\to[0,\infty)$ is continuous, non-decreasing, and $x\in V_{\T_q}^p$ satisfies
    \[
        [x]_{\T_q}^{(p)} \equiv H\circ\phi^{-1},
    \]
    then $x\circ\phi\in V_\P^p$ and $[x\circ\phi]_\P^{(p)} \equiv H$.
\end{corollary}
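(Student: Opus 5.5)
The corollary follows by combining Proposition~\ref{prop:q-refining-homeomorphism} with Lemma~\ref{lem:time-change-general}, applied with $\pi=\T_q$.

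\emph{First claim.} Proposition~\ref{prop:q-refining-homeomorphism} gives the increasing homeomorphism $\phi$ with $\P^n=\phi^{-1}(\T_q^n)$ for every $n\ge0$. In the notation of Lemma~\ref{lem:time-change-general} this reads $\P=\T_q(\phi)$.

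Since $x\in V^p_{\T_q}$, the lemma yields $x\circ\phi\in V^p_{\T_q(\phi)}=V^p_\P$. It also gives, for every $t\in[0,1]$,
\[
[x\circ\phi]^{(p)}_\P(t)=[x]^{(p)}_{\T_q}(\phi(t))=h(\phi(t)).
\]

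Some bookkeeping is needed. In Section~\ref{sec: extention to general partitions}, membership in $V^p_\pi$ was defined only through pointwise convergence. The lemma's identity holds already at each finite level $n$, namely $[x\circ\phi]^{(p)}_{\P^n}(t)=[x]^{(p)}_{\T_q^n}(\phi(t))$. Passing to the limit therefore transfers pointwise convergence immediately. It also transfers uniform convergence and continuity of the limit, should the stronger notion of Definition~\ref{Def: p-th variation} be intended. The reason is that $\phi$ is a homeomorphism, so $h\circ\phi$ is continuous and non-decreasing with $h(\phi(0))=h(0)=0$, and $\sup_t$ is unchanged under composition with the bijection $\phi$.

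\emph{Equivalent formulation.} Apply the first claim with $h:=H\circ\phi^{-1}$. This $h$ is continuous and non-decreasing because $\phi^{-1}$ is an increasing homeomorphism. We obtain $[x\circ\phi]^{(p)}_\P=H\circ\phi^{-1}\circ\phi=H$.

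The statement as written does not require $H(0)=0$. This causes no problem: the hypothesis $[x]^{(p)}_{\T_q}\equiv H\circ\phi^{-1}$ forces $H(0)=H(\phi^{-1}(0))=[x]^{(p)}_{\T_q}(0)=0$, since every discrete variation vanishes at $t=0$.

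There is no real obstacle here. The only points needing care are identifying $\P$ with $\T_q(\phi)$ and checking that the monotonicity, continuity and normalization of $h\circ\phi$ and $H\circ\phi^{-1}$ survive the time change.
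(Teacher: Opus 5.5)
Your proposal is correct and matches the paper's proof: the paper likewise applies Lemma~\ref{lem:time-change-general} with $\pi=\T_q$ and uses Proposition~\ref{prop:q-refining-homeomorphism} to identify $\P^n=\phi^{-1}(\T_q^n)$. Your extra bookkeeping is also correct and simply spells out what the paper leaves implicit, namely the transfer of uniform convergence via the level-$n$ identity and the fact that $H(0)=0$ is forced by the hypothesis.
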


\begin{proof}
    Apply Lemma~\ref{lem:time-change-general} with $\pi=\T_q$ and use Proposition~\ref{prop:q-refining-homeomorphism}.
\end{proof}

The previous corollary shows that the problem of constructing a path with prescribed $p$-th variation along $\P$ reduces, via pullback by $\phi^{-1}$, to the corresponding prescribed-variation problem along $\T_q$. Applying the $q$-adic analogue of Theorem~\ref{thm: recipe} to the pulled-back target
\[
    h:=H\circ\phi^{-1},
\]
we therefore obtain the following constructive existence result for every dense $q$-refining partition sequence.

\begin{theorem}\label{thm:q-refining-recipe}
    Let $q\ge2$, let $\P$ be a dense $q$-refining partition sequence, and let $\phi:[0,1]\to[0,1]$ be the associated increasing homeomorphism from Proposition~\ref{prop:q-refining-homeomorphism}, so that
    \[
        \P^n=\phi^{-1}(\T_q^n),\qquad n\ge0.
    \]
    Let $H:[0,1]\to[0,\infty)$ be continuous and non-decreasing with $H(0)=0$, and set
    \[
        h:=H\circ\phi^{-1}.
    \]
    Suppose that $h\in C^1([0,1])$ and that $\big[(h')^{1/p}\big]_{\T_q}^{(p)}\equiv 0$ for some $p>1$. Then there exists a path $y\in V_\P^p$ such that
    \[
        [y]_\P^{(p)}(t)=H(t),\qquad t\in[0,1].
    \]
    Moreover, if $(h')^{1/p}\in C^\alpha([0,1])$ for some $\alpha \in (0, 1/p]$, and if
    $\phi\in C^\beta([0,1])$ for some $\beta\in(0,1]$, then the constructed path
    $y$ belongs to $C^{\alpha\beta}([0,1])\cap V_\P^p$.
\end{theorem}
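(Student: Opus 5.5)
The plan is to reduce to the $q$-adic setting and then pull back along $\phi$ using Corollary~\ref{cor:time-change-transport}. The first task is to establish the $q$-adic analogue of Theorem~\ref{thm: recipe}: for $h\in C^1([0,1])$, non-decreasing with $h(0)=0$ and $[(h')^{1/p}]^{(p)}_{\T_q}\equiv0$, there is an $x\in V^p_{\T_q}$ with $[x]^{(p)}_{\T_q}=h$. Note that $h=H\circ\phi^{-1}$ is automatically continuous and non-decreasing with $h(0)=H(\phi^{-1}(0))=H(0)=0$, since $\phi^{-1}$ is an increasing homeomorphism fixing $0$.

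To build the reference path, fix any $a\in\R^{q-1}\setminus\{0\}$ and take $c_m:=q^{m(\frac12-\frac1p)}$ in \eqref{con: uniform mag q-adic}. Then $\xi^{(p,q)}_m\equiv1$, and Theorem~\ref{thm:q-adic-linear-pvar} yields $[x_0]^{(p)}_{\T_q}(t)=C_{p,q,a}\,t$ with $C_{p,q,a}\in(0,\infty)$.

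Next I need the $q$-adic version of Proposition~\ref{prop: translation}: if $x_0\in V^p_{\T_q}$ and $[g]^{(p)}_{\T_q}\equiv0$, then $[gx_0]^{(p)}_{\T_q}(t)=\int_0^t|g|^p\,d[x_0]^{(p)}_{\T_q}$. I expect the dyadic proof to carry over verbatim, because it should use only the product expansion
\[
(gx_0)(t_{i+1})-(gx_0)(t_i)=g(t_i)\big(x_0(t_{i+1})-x_0(t_i)\big)+x_0(t_{i+1})\big(g(t_{i+1})-g(t_i)\big).
\]
The argument then proceeds in three steps:
\begin{enumerate}[(i)]
\item control the second term in $\ell^p$ by $\|x_0\|_\infty([g]^{(p)}_{\T_q^n})^{1/p}\to0$;
\item use Minkowski's inequality to show that the $p$-th variation of the product is asymptotically that of the first term;
\item identify the limit of $\sum_i|g(t_i)|^p|x_0(t_{i+1}\wedge t)-x_0(t_i\wedge t)|^p$ as the Riemann--Stieltjes integral, via the weak convergence of the discrete $p$-th variation measures (uniform convergence of distribution functions) and the continuity of $|g|^p$.
\end{enumerate}
None of these steps uses the dyadic structure beyond the mesh tending to zero. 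With this in hand, set $g:=(h'/C_{p,q,a})^{1/p}$ and $x:=gx_0$. The integral then equals $\int_0^t h'(u)\,du=h(t)$, exactly as in the proof of Theorem~\ref{thm: recipe}.

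Finally, set $y:=x\circ\phi$. Corollary~\ref{cor:time-change-transport}, with $h=H\circ\phi^{-1}$, gives $y\in V^p_{\P}$ and $[y]^{(p)}_{\P}(t)=h(\phi(t))=H(t)$.

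For the regularity claim I proceed in three steps. First, Lemma~\ref{lem: Ciesielski} needs a $q$-adic version, or I can check directly that $x_0\in C^{1/p}$: for $|t-s|\approx q^{-n}$, bound the levels $m<n$ by $|t-s|\sum_{m<n}q^{m/2}c_m\lesssim|t-s|q^{n(1-1/p)}$, and the levels $m\ge n$ by $\sum_{m\ge n}q^{-m/2}c_m\lesssim q^{-n/p}$. Second, since $\alpha\le1/p$, the product $x=gx_0$ lies in $C^\alpha$. Third, Lemma~\ref{lem:time-change-general} gives $y=x\circ\phi\in C^{\alpha\beta}$.

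I expect the main obstacle to be the $q$-adic transfer of Proposition~\ref{prop: translation}, in particular the Stieltjes identification in step (iii). Since that proof appears later in the paper, I would simply assert that it is partition-independent, and verify only that nothing in it relies on binary splitting.
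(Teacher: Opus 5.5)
Your proposal is correct and follows essentially the same route as the paper: a $q$-adic reference path with linear $p$-th variation from Theorem~\ref{thm:q-adic-linear-pvar}, the $q$-adic analogue of Proposition~\ref{prop: translation} with multiplier $g=(h'/C_{p,q,a})^{1/p}$, and then composition with $\phi$ via Corollary~\ref{cor:time-change-transport} and Lemma~\ref{lem:time-change-general}. Your direct check that the $q$-adic reference path is $\frac1p$-H\"older is a useful addition, since the paper only asserts this even though Theorem~\ref{thm:q-adic-linear-pvar} does not state it.
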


\begin{proof}
    By applying along $\T_q$ the same multiplicative construction as in the proof of Theorem~\ref{thm: recipe}, using Theorem~\ref{thm:q-adic-linear-pvar} to provide a reference path with linear $p$-th variation, one obtains $x\in V_{\T_q}^p$ such that
    \[
        [x]_{\T_q}^{(p)} \equiv h.
    \]
    Set $y:=x\circ\phi$. Then Corollary~\ref{cor:time-change-transport} gives
    \[
    [y]_\P^{(p)}(t)
    =
    [x]_{\T_q}^{(p)}(\phi(t))
    =
    h(\phi(t))
    =
    H(t),
    \qquad t\in[0,1].
    \]
    If in addition $(h')^{1/p}\in C^\alpha([0,1])$ for some $\alpha\le 1/p$, then the same $q$-adic theorem yields $x\in C^\alpha([0,1])\cap V_{\T_q}^p$. Therefore, if also $\phi\in C^\beta([0,1])$ for some $\beta\in(0,1]$,
    Lemma~\ref{lem:time-change-general} implies
    \[
        y=x\circ\phi\in C^{\alpha\beta}([0,1])\cap V_\P^p.
    \]
\end{proof}

\begin{remark}
    Suppose in addition that $\phi$ is a $C^{1+\gamma}$-diffeomorphism of $[0,1]$ for some $\gamma>1/p$. Then the hypothesis on $h=H\circ\phi^{-1}$ in Theorem~\ref{thm:q-refining-recipe} can be replaced by the more direct condition
    \[
        H\in C^1([0,1]), \qquad \big[(H')^{1/p}\big]_{\P}^{(p)}\equiv 0.
    \]
    Indeed, if $(H')^{1/p}$ has vanishing $p$-th variation along $\P$, then by Corollary~\ref{cor:time-change-transport},
    \[
    [(H'\circ \phi^{-1})^{1/p}]^{(p)}_{\T_q} \equiv 0.
    \]
    Moreover, since $(\phi^{-1})'\in C^\gamma([0,1])$ and $(\phi^{-1})'>0$ on $[0,1]$, compactness implies that $(\phi^{-1})'$ is bounded away from zero: there exists $c>0$ such that $(\phi^{-1})'(t)\ge c$ for all $t\in[0,1]$. The map $x\mapsto x^{1/p}$ is $C^1$ on $[c,\infty)$, hence Lipschitz there. Therefore, $((\phi^{-1})')^{1/p}\in C^\gamma([0,1])$ as well and Lemma~\ref{lem:inclusions} (iii) implies that $((\phi^{-1})')^{1/p}$ also has vanishing $p$-th variation along $\T_q$. Since
    \[
        (h')^{1/p} = (H' \circ \phi^{-1})^{1/p} \, \big((\phi^{-1})'\big)^{1/p},
    \]
    it follows from the $q$-adic version of Proposition~\ref{prop: translation} that
    \[
        \big[(h')^{1/p}\big]_{\T_q}^{(p)}\equiv 0.
    \]
    Therefore Theorem~\ref{thm:q-refining-recipe} applies.
\end{remark}

We conclude with the observation that, since $\phi$ is a homeomorphism of $[0,1]$, the composition operator
\[
    C_\phi:C([0,1])\to C([0,1]),\qquad C_\phi(x):=x\circ\phi,
\]
is a linear isometric isomorphism on $C([0,1])$ equipped with the uniform norm; indeed,
\[
    \|x\circ\phi\|_\infty=\|x\|_\infty,
    \qquad
    C_\phi^{-1}=C_{\phi^{-1}}.
\]
Therefore, density statements (Section~\ref{subsec: dense}) transfer immediately from the $q$-adic setting to a dense $q$-refining partition sequence $\P$. Likewise, Banach subspaces and stability statements (Sections~\ref{subsec: Banach subspaces} and \ref{subsec: stability Ito}) obtained in the $q$-adic setting can be transported to $\P$ by pulling back the norm through $C_\phi$.

\bigskip

\section{Proofs}    \label{sec: proofs}

\subsection{Proof of Theorem~\ref{thm: xi implies p-th variation}}    \label{subsec: proof main}

To prove Theorem~\ref{thm: xi implies p-th variation}, we first introduce some notations and preliminary lemmas.

For $n\ge1$ and $k\in I_n$, write the level-$n$ dyadic interval
\[
    I_{n,k}:=\Big[\frac{k}{2^n},\frac{k+1}{2^n}\Big).
\]
For $0\le m\le n-1$, there is a unique index
\[
    \kappa(m;n,k):=\Big\lfloor \frac{k}{2^{n-m}}\Big\rfloor\in I_m
\]
such that $I_{n,k}\subset\big[\kappa(m;n,k)/2^m,\ (\kappa(m;n,k)+1)/2^m\big)$. Since the Haar function $\psi_{m,\kappa(m;n,k)}$ of \eqref{Eq:Haar} is constant on $I_{n,k}$, we may define its \emph{Haar sign} on $I_{n,k}$ by
\begin{equation}    \label{eq:haar-sign-on-Ink}
    \varepsilon_{m}(n,k)\in\{\pm1\}
    \quad\text{via}\quad
    \psi_{m,\kappa(m;n,k)}(t)=\varepsilon_{m}(n,k)\,2^{m/2},
    \qquad \forall\,t\in I_{n,k}.
\end{equation}
Under Assumption~\ref{assum: uniform mag}, define the \emph{level signs} for $j=1,\dots,n$ by
\begin{equation}    \label{eq:level-signs-epsj}
    \varepsilon_j(k) := \sigma_{n-j,\kappa(n-j;n,k)} \,\varepsilon_{n-j}(n,k)\in\{\pm1\}, \qquad j=1,\dots,n,\ \ k\in I_n.
\end{equation}
Roughly speaking, for a fixed finest interval $I_{n,k}$, the quantity $\varepsilon_j(k)$ records the \emph{effective sign} of the level-$(n-j)$ contribution to the increment of $x$ over $I_{n,k}$. This becomes clear from the following calculation:
\begin{align*}
    x(\frac{k+1}{2^n}) - x(\frac{k}{2^n}) &= \sum_{m=0}^{n-1} \theta_{m, \kappa(m;n,k)} \Big( e_{m, \kappa(m;n,k)}(\frac{k+1}{2^n}) - e_{m, \kappa(m;n,k)}(\frac{k}{2^n}) \Big)
    \\
    &= \sum_{m=0}^{n-1} c_m \sigma_{m, \kappa(m;n,k)} \Big( \varepsilon_m(n, k) 2^{m/2}2^{-n} \Big) = \sum_{m=0}^{n-1}  c_m 2^{m/2}2^{-n} \big( \sigma_{m, \kappa(m;n,k)} \varepsilon_m(n, k) \big)
    \\
    &= \sum_{j = 1}^{n} c_{n-j} 2^{(n-j)/2}2^{-n} \Big( \sigma_{n-j, \kappa(n-j;n,k)} \varepsilon_{n-j}(n, k) \Big)
    = \sum_{j = 1}^{n} c_{n-j} 2^{(n-j)/2}2^{-n} \Big( \varepsilon_j(k) \Big)
\end{align*}
Indeed, $\psi_{n-j,\kappa(n-j;n,k)}$ is constant on $I_{n,k}$ with Haar sign $\varepsilon_{n-j}(n,k)$, while under
Assumption~\ref{assum: uniform mag} the corresponding Faber-Schauder coefficient carries an additional coefficient sign
$\sigma_{n-j,\kappa(n-j;n,k)}$. Their product $\varepsilon_j(k)$ is therefore the sign with which the level-$(n-j)$ ``tent'' contributes to the dyadic difference $x(\frac{k+1}{2^n})-x(\frac{k}{2^n})$ in the following lemma.

\begin{lemma}       \label{lem:increment-rademacher}
For $n\ge1$ and $k\in I_n$, set $\Delta_k^n x:=x(\frac{k+1}{2^n})-x(\frac{k+1}{2^n})$. Under Assumption~\ref{assum: uniform mag}, we define for any $p>1$
    \begin{equation}\label{eq:def-y-rho}
        y_m:=2^{m(\frac1p-\frac12)}c_m
        \qquad\text{and}\qquad
        \rho:=2^{-(1-\frac1p)}\in(0,1).
    \end{equation}
    Then, we have the identity
    \begin{equation}\label{eq:dyadic-increment-rademacher}
        2^{\frac{n}{p}}\Delta_k^n x = \sum_{j=1}^{n}\rho^{j}\,y_{n-j}\,\varepsilon_j(k),
    \end{equation}
    with the level sign notation $\varepsilon_j(k)$ of \eqref{eq:level-signs-epsj}. Moreover, we have the relationship for every $m \ge 0$
    \begin{equation}\label{eq:xi-y-connection}
        \xi_m^{(p)} = 2^{-\frac{mp}{2}}\sum_{k\in I_m}|\theta_{m,k}|^p = 2^{m(1-\frac{p}{2})}c_m^p, \qquad\text{hence}\qquad y_m=\big(\xi_m^{(p)}\big)^{1/p}.
    \end{equation}
\end{lemma}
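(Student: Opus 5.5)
Note first that the definition of $\Delta^n_k x$ in the statement contains a typo: it should read $x(\frac{k+1}{2^n})-x(\frac{k}{2^n})$, and we prove the lemma for this intended increment. The plan is to start from the increment computation displayed just before the lemma and then rescale.

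\textbf{Step 1: only coarse levels contribute.} The first task is to justify that only levels $m\le n-1$ contribute to $\Delta^n_k x$. For $m\ge n$, every Faber-Schauder function $e_{m,k'}$ vanishes at all dyadic points of level $n$, since it is supported on $[k'2^{-m},(k'+1)2^{-m}]$ and is zero at both endpoints. Hence the tail of the series \eqref{eq: FS repre} vanishes at $k/2^n$ and $(k+1)/2^n$. For $m\le n-1$, the only $e_{m,k'}$ that is non-constant on $I_{n,k}$ is the one with $k'=\kappa(m;n,k)$. By \eqref{Eq: e_mk} and \eqref{eq:haar-sign-on-Ink}, its increment over $I_{n,k}$ equals $\varepsilon_m(n,k)2^{m/2}2^{-n}$. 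The other functions $e_{m,k'}$ at level $m$ are identically zero on that interval.

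\textbf{Step 2: reindexing.} Substituting $\theta_{m,k'}=c_m\sigma_{m,k'}$ and reindexing by $j=n-m$ gives
\[
\Delta^n_k x=\sum_{j=1}^n c_{n-j}\,2^{(n-j)/2}\,2^{-n}\,\varepsilon_j(k),
\]
exactly as in the preceding display.

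\textbf{Step 3: rescaling.} Multiply by $2^{n/p}$ and check the exponent algebra. Writing $c_{n-j}=2^{-(n-j)(\frac1p-\frac12)}y_{n-j}$, the power of $2$ becomes
\[
\frac np-n+\frac{n-j}{2}-(n-j)\Big(\frac1p-\frac12\Big).
\]
Expanding, this equals
\[
\frac np-n+(n-j)-\frac{n-j}{p}=-j+\frac jp=-j\Big(1-\frac1p\Big).
\]
Therefore the factor is $\rho^j$, which yields \eqref{eq:dyadic-increment-rademacher}.

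\textbf{Step 4: the identity for $\xi^{(p)}_m$.} Under Assumption~\ref{assum: uniform mag} we have $|\theta_{m,k}|^p=c_m^p$ for each of the $2^m$ indices $k\in I_m$. Hence
\[
\xi^{(p)}_m=2^{-mp/2}\,2^m c_m^p=2^{m(1-\frac p2)}c_m^p .
\]
Taking $p$-th roots gives $(\xi^{(p)}_m)^{1/p}=2^{m(\frac1p-\frac12)}c_m=y_m$, using $c_m\ge0$.

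\textbf{Main obstacle.} The only real care needed is the support and vanishing argument for the levels $m\ge n$. This is needed in particular so that the pointwise evaluation of the possibly only conditionally summed series is legitimate. It is handled by the fact that at any fixed dyadic point the series has only finitely many nonzero terms.
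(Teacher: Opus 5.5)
Your proposal is correct and follows essentially the same route as the paper. The paper also reduces to the partial sum $x_n$ up to level $n-1$, which agrees with $x$ on $\T^n$ (your vanishing-tail observation). It then integrates the piecewise-constant derivative of $x_n$ over $I_{n,k}$ using the Haar signs, which matches your level-by-level increment computation, followed by the same reindexing $j=n-m$ and exponent bookkeeping. You are also right that the definition of $\Delta^n_k x$ contains a typo and should read $x(\frac{k+1}{2^n})-x(\frac{k}{2^n})$.
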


\begin{proof}
    Consider the $n$-th Faber-Schauder partial sum for $n \in \mathbb{N}$
    \[
        x_n(t) := \sum_{m=0}^{n-1}\sum_{\ell\in I_m}\theta_{m,\ell}\,e_{m,\ell}(t).
    \]
    Since $x_n(t_k^n)=x(t_k^n)$ for all $k\in\{0,\dots,2^n\}$, $\Delta_k^n x=\Delta_k^n x_n$ holds. Since $e_{m,\ell}'(t)=\psi_{m,\ell}(t)$ holds for almost every $t$ and $x_n$ is affine on each $I_{n,k}$, we have for a.e. $t\in I_{n,k}$,
    \[
        (x_n)'(t) = \sum_{m=0}^{n-1} \theta_{m,\kappa(m;n,k)}\,\psi_{m,\kappa(m;n,k)}(t),
    \]
    and integrating over $I_{n,k}$ gives
    \[
        \Delta_k^n x = 2^{-n}\sum_{m=0}^{n-1}\theta_{m,\kappa(m;n,k)}\,\psi_{m,\kappa(m;n,k)}(t)
        \qquad \text{for any fixed } t\in I_{n,k}.
    \]
    Using \eqref{eq:haar-sign-on-Ink} and Assumption~\ref{assum: uniform mag} yields 
    \begin{equation*}
        \Delta_k^n x
        = 2^{-n}\sum_{m=0}^{n-1} 2^{m/2}\,c_m\,\varepsilon_{n-m}(k).
    \end{equation*}
    Hence, the identity \eqref{eq:dyadic-increment-rademacher} follows from \eqref{eq:def-y-rho} by reindexing $j=n-m$. Finally, \eqref{eq:xi-y-connection} is immediate from \eqref{eq:TL-signed-coeff} and $|I_m|=2^m$.
\end{proof}

The following Lemma~\ref{lem:sign-matrix} is from \cite[Proof of Theorem~2.1]{Misura2019}. The key observation is that, as $k$ runs over $I_n$, the vector of level signs $\big(\varepsilon_1(k),\dots,\varepsilon_n(k)\big)$ exhausts \emph{all} sign patterns in $\{\pm1\}^n$ exactly once. Equivalently, the $2^n\times n$ sign matrix whose $k$-th row is $\big(\varepsilon_1(k),\dots,\varepsilon_n(k)\big)$ is a permutation of the standard Rademacher sign matrix. Consequently, any average over dyadic intervals at level $n$ of a function of $\big(\varepsilon_1(k),\dots,\varepsilon_n(k)\big)$ coincides with the expectation of the same function applied to i.i.d.\ Rademacher variables, as stated in \eqref{eq:average-equals-expectation}.

\begin{lemma}   \label{lem:sign-matrix}
    Fix $n \in \mathbb{N}$. The map
    \[
        I_n\ni k\longmapsto \big(\varepsilon_1(k),\dots,\varepsilon_n(k)\big)\in\{\pm1\}^n
    \]
    is a bijection. Consequently, for any function $\Phi:\{\pm1\}^n\to\R$,
    \begin{equation}\label{eq:average-equals-expectation}
        2^{-n}\sum_{k\in I_n}\Phi\big(\varepsilon_1(k),\dots,\varepsilon_n(k)\big) =\E\Big[\Phi(\varepsilon_1,\dots,\varepsilon_n)\Big],
    \end{equation}
    where $\varepsilon_1,\dots,\varepsilon_n$ are i.i.d.\ Rademacher random variables.
\end{lemma}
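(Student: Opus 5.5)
The plan is to prove bijectivity by induction on $n$, using the binary digits of $k$, and then obtain \eqref{eq:average-equals-expectation} as a counting identity.

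\emph{Step 1 (structure of the Haar signs).} Write $k=\sum_{i=1}^{n} b_i 2^{n-i}$ with binary digits $b_i\in\{0,1\}$. For $0\le m\le n-1$, the index $\kappa(m;n,k)$ is the integer with binary digits $b_1,\dots,b_m$. Inside the level-$m$ interval indexed by $\kappa(m;n,k)$, the interval $I_{n,k}$ lies in the left half exactly when $b_{m+1}=0$. By the definition of $\psi$, this gives $\varepsilon_m(n,k)=(-1)^{b_{m+1}}$. So the Haar sign at level $m$ depends only on the $(m+1)$-st digit of $k$.

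\emph{Step 2 (triangular structure).} By \eqref{eq:level-signs-epsj}, for $j=1,\dots,n$,
\[
\varepsilon_j(k)=\sigma_{n-j,\kappa(n-j;n,k)}\,(-1)^{b_{n-j+1}}.
\]
The factor $\sigma_{n-j,\kappa(n-j;n,k)}\in\{\pm1\}$ depends only on the digits $b_1,\dots,b_{n-j}$. Hence $\varepsilon_n(k)=\sigma_{0,0}(-1)^{b_1}$ depends only on $b_1$. More generally, $\varepsilon_{n-m}(k)$ is determined by $b_1,\dots,b_{m+1}$, and is $\pm$ a function of $b_1,\dots,b_m$ times $(-1)^{b_{m+1}}$.

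\emph{Step 3 (injectivity by back-substitution).} Given a target sign vector $(s_1,\dots,s_n)\in\{\pm1\}^n$, recover the digits one at a time:
\begin{itemize}
\item $b_1$ is uniquely determined by $s_n$;
\item once $b_1,\dots,b_m$ are known, the sign $\sigma_{m,\kappa}$ is known, so $b_{m+1}$ is uniquely determined by $s_{n-m}$.
\end{itemize}
This shows that the map $k\mapsto(\varepsilon_1(k),\dots,\varepsilon_n(k))$ is injective. It is also surjective, since every target vector produces a valid digit string. Alternatively, surjectivity follows from injectivity because $|I_n|=2^n=|\{\pm1\}^n|$.

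\emph{Step 4 (the averaging identity).} Because the map is a bijection,
\[
2^{-n}\sum_{k\in I_n}\Phi\big(\varepsilon_1(k),\dots,\varepsilon_n(k)\big)=2^{-n}\sum_{s\in\{\pm1\}^n}\Phi(s).
\]
The right-hand side is $\E[\Phi(\varepsilon_1,\dots,\varepsilon_n)]$, since i.i.d.\ Rademacher variables are uniformly distributed on $\{\pm1\}^n$.

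The main obstacle is the bookkeeping in Steps 1 and 2. One must make sure the digit $b_{m+1}$ enters exactly once, through the Haar sign $\varepsilon_m(n,k)$. It must not also appear inside $\sigma_{m,\kappa(m;n,k)}$. This holds because $\kappa(m;n,k)=\lfloor k/2^{n-m}\rfloor$ uses only the first $m$ digits.
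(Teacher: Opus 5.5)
Your proof is correct and complete. The three facts you establish give the result for an arbitrary sign array $\sigma$. First, $\kappa(m;n,k)=\lfloor k/2^{n-m}\rfloor$ encodes exactly the leading binary digits $b_1,\dots,b_m$. Second, the Haar sign is $\varepsilon_m(n,k)=(-1)^{b_{m+1}}$. Third, the level signs therefore have the triangular form $\varepsilon_{n-m}(k)=\sigma_{m,\kappa(m;n,k)}(-1)^{b_{m+1}}$, which is what lets the digit-by-digit inversion go through.

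The paper does not actually prove this lemma. It defers to the proof of Theorem~2.1 in \cite{Misura2019} and records only the conclusion that the level-sign vectors exhaust $\{\pm1\}^n$ exactly once. Your back-substitution is a self-contained proof of precisely that observation, not a different route. The underlying mechanism (each new sign is a fresh binary digit times a function of the earlier digits) is the same one behind the cited argument.

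Your triangular description also directly gives the blockwise fact used in Step~3 of the proof of Theorem~\ref{thm: xi implies p-th variation}. On $B_{n,m}(r)$ the digits $b_1,\dots,b_m$ are frozen. Running the same inversion from level $m$ upward shows that $(\varepsilon_1(k),\dots,\varepsilon_{n-m}(k))$ runs through $\{\pm1\}^{n-m}$ exactly once for every $r$. The paper attributes that fact to this lemma without this extra remark.
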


We are now ready to prove Theorem~\ref{thm: xi implies p-th variation}. The proof is broken into 4 steps for convenience of readers.

\begin{proof}[Proof of Theorem~\ref{thm: xi implies p-th variation}]
    \noindent\textbf{Step }1: $x$ is $\frac1p$-H\"older continuous.

    From the identity \eqref{eq:xi-y-connection}, the existence of the limit of $\xi^{(p)}_m$ implies
    \[
        \sup_{m \ge 0} \big( 2^{m(1-\frac p2)} c_m^p \big) < \infty \qquad \iff \qquad \sup_{m \ge 0} \big( 2^{m(\frac 1p-\frac 12)} c_m \big) < \infty.
    \]
    Since $|\theta_{m, k}| = c_m$ for all $k \in I_m$ by Assumption~\ref{assum: uniform mag}, this is also equivalent to condition \eqref{con: Holder} of Lemma~\ref{lem: Ciesielski} when setting $\alpha = \frac 1p$. Hence, the resulting function $x$ is $\frac1p$-H\"older continuous.

    \medskip
    
    \noindent\textbf{Step 2}: The identity \eqref{eq: xi implies p-th variation} holds for terminal time, i.e. $t=1$.
    
    By Lemma~\ref{lem:increment-rademacher} and Lemma~\ref{lem:sign-matrix}, we have for each $n\in\N$,
    \begin{equation}\label{eq:Vn-expectation}
        [x]^{(p)}_{\T^n}(1) = \sum_{k\in I_n}|\Delta^n_k x|^p
        = 2^{-n}\sum_{k\in I_n}\Big|\sum_{j=1}^n \rho^j y_{n-j}\,\varepsilon_j(k)\Big|^p
        = \E\Big[\Big|\sum_{j=1}^n \rho^j y_{n-j}\,\varepsilon_j\Big|^p\Big],
    \end{equation}
    where $(\varepsilon_j)_{j\ge1}$ are i.i.d.\ Rademacher random variables.
    
    Since $\xi^{(p)}_m=y_m^p$ converges as $m\to\infty$, there exists $y\ge0$ such that $y_m \xrightarrow{m\to\infty}y$. Set
    \[
        Z:=\sum_{j=1}^{\infty}\rho^j \varepsilon_j, \qquad C_p:=\E[|Z|^p]\in(0,\infty),
    \]
    where the series $Z$ converges a.s.\ and in $L^p$ since $\sum_{j\ge1}\rho^j<\infty$ and $p>1$. We now claim the convergence
    \begin{equation}    \label{eq:Sp-to-yZ}
        \sum_{j=1}^n \rho^j y_{n-j}\varepsilon_j \;\xrightarrow[n\to\infty]{\;\;L^p\;\;} y\sum_{j=1}^{\infty}\rho^j\varepsilon_j = yZ.
    \end{equation}
    Indeed, fix $J\in\N$.  Write
    \[
        \sum_{j=1}^n \rho^j y_{n-j}\varepsilon_j = \sum_{j=1}^J \rho^j y_{n-j}\varepsilon_j +\sum_{j=J+1}^n \rho^j y_{n-j}\varepsilon_j.
    \]
    Since $y_{n-j} \xrightarrow{n\to\infty} y$ for each fixed $j\le J$, the first sum converges to $\sum_{j=1}^J\rho^j y\varepsilon_j$ in $L^p$. For the tail part, using $\sup_m y_m<\infty$ (which follows from $y_m\to y$) we obtain
    \[
        \Big\|\sum_{j=J+1}^n \rho^j y_{n-j}\varepsilon_j\Big\|_{L^p} \le \sum_{j=J+1}^{\infty}\rho^j \sup_m y_m \;\xrightarrow{J\to\infty}\;0,
    \]
    uniformly in $n\ge J+1$.  This yields the claimed $L^p$ convergence \eqref{eq:Sp-to-yZ}.
    
    Consequently, by \eqref{eq:Vn-expectation} and convergence in $L^p$,
    \begin{equation}    \label{eq:Vn-limit}
        [x]^{(p)}_{\T^n}(1) = \E\Big[\Big|\sum_{j=1}^n \rho^j y_{n-j}\varepsilon_j\Big|^p\Big] \;\xrightarrow{n\to\infty} \;\E[|yZ|^p] = y^p C_p = C_p \lim_{n\to\infty} \xi^{(p)}_n.
    \end{equation}

    \medskip
    \noindent\textbf{Step 3}: The identity \eqref{eq: xi implies p-th variation} holds for dyadic points $t$.
    
    Fix $m\in\N$ and a dyadic point $t=\ell 2^{-m}$ with $\ell\in\{0,1,\dots,2^m\}$.
    For each $n\ge m$, decompose $\{0,1,\dots,2^n-1\}$ into $2^m$ consecutive blocks of length $2^{n-m}$:
    \[
        B_{n,m}(r):=\{r2^{n-m},\dots,(r+1)2^{n-m}-1\},\qquad r=0,\dots,2^m-1.
    \]
    Then
    \[
        [x]^{(p)}_{\T^n}(t)
        =\sum_{k=0}^{\ell 2^{n-m}-1}|\Delta_k^n x|^p
        =\sum_{r=0}^{\ell-1}\ \sum_{k\in B_{n,m}(r)}|\Delta_k^n x|^p.
    \]
    Set
    \[
        V_{n,m}(r):=\sum_{k\in B_{n,m}(r)}|\Delta_k^n x|^p,\qquad r=0,\dots,2^m-1,
    \]
    so that $[x]^{(p)}_{\T^n}(1)=\sum_{r=0}^{2^m-1}V_{n,m}(r)$ and
    $[x]^{(p)}_{\T^n}(t)=\sum_{r=0}^{\ell-1}V_{n,m}(r)$.

    We claim that for each fixed $m$,
    \begin{equation}\label{eq:block-equipartition}
        \max_{0\le r\le 2^m-1}\Big|V_{n,m}(r)-2^{-m}[x]^{(p)}_{\T^n}(1)\Big|
        \xrightarrow[n\to\infty]{}0 .
    \end{equation}
    Indeed, using \eqref{eq:dyadic-increment-rademacher}, write for $k\in I_n$
    \[
        2^{\frac np}\Delta_k^n x=\sum_{j=1}^n \rho^j y_{n-j}\,\varepsilon_j(k).
    \]
    For $k\in B_{n,m}(r)$, decompose the sum into the head and tail parts
    \[
        U_{n,m}(k):=\sum_{j=1}^{n-m}\rho^j y_{n-j}\,\varepsilon_j(k),
        \qquad
        T_{n,m}(r):=\sum_{j=n-m+1}^{n}\rho^j y_{n-j}\,\varepsilon_j(r2^{n-m}),
    \]
    so that
    \[
        2^{\frac np}\Delta_k^n x = \sum_{j=1}^n \rho^j y_{n-j}\,\varepsilon_j(k)=U_{n,m}(k)+T_{n,m}(r),
        \qquad k\in B_{n,m}(r).
    \]
    Here, note that for $k\in B_{n,m}(r)$ and $j\in\{n-m+1,\dots,n\}$ the sign $\varepsilon_j(k)$ depends only on $\lfloor k/2^{n-m}\rfloor=r$, hence it is constant over the block. Moreover, since $Y:=\sup_{m\ge0}y_m<\infty$ (as $y_m \xrightarrow{m\to\infty} y$), we have the uniform tail bound
    \[
        \max_{0\le r\le 2^m-1}|T_{n,m}(r)|
        \le Y\sum_{j=n-m+1}^{\infty}\rho^j
        \le \frac{Y \rho}{1-\rho} \,\rho^{\,n-m}.
    \]
    Also, $\sup_{n,k}|U_{n,m}(k)|\le Y\sum_{j\ge1}\rho^j=:M<\infty$, hence the map
    $u\mapsto |u|^p$ is Lipschitz on $[-2M,2M]$ and there exists $L=L(p,M)$ such that
    \[
        \big||u+v|^p-|u|^p\big|\le L|v|,\qquad |u|\le M,\ |v|\le M.
    \]
    Therefore, for each $r$,
    \begin{align*}
        \Big|V_{n,m}(r)-2^{-n}\sum_{k\in B_{n,m}(r)}|U_{n,m}(k)|^p\Big|
        &=2^{-n}\Big|\sum_{k\in B_{n,m}(r)}\big(|U_{n,m}(k)+T_{n,m}(r)|^p-|U_{n,m}(k)|^p\big)\Big|\\
        &\le 2^{-n}\sum_{k\in B_{n,m}(r)}L|T_{n,m}(r)|
        \\
        &\le L\,2^{-m}\max_{0\le r \le 2^m-1}|T_{n,m}(r)|
        \le \frac{LY \rho}{1-\rho}\,2^{-m}\rho^{n-m}.
    \end{align*}
    By Lemma~\ref{lem:sign-matrix}, the multiset
    $\{(\varepsilon_1(k),\dots,\varepsilon_{n-m}(k)):\ k\in B_{n,m}(r)\}$
    is the same for every $r$, hence the quantity
    \[
        A_{n,m}:=2^{-n}\sum_{k\in B_{n,m}(r)}|U_{n,m}(k)|^p
    \]
    is independent of $r$. It follows that
    \[
        \max_{0\le r\le 2^m-1}|V_{n,m}(r)-A_{n,m}|\le\frac{LY \rho}{1-\rho}\,2^{-m}\rho^{n-m}.
    \]
    Since $\sum_{r=0}^{2^m-1}V_{n,m}(r)=[x]^{(p)}_{\T^n}(1)$, we also have $A_{n,m}=2^{-m}[x]^{(p)}_{\T^n}(1)+O(2^{-m}\rho^{n-m})$, and \eqref{eq:block-equipartition} follows.

    Consequently, for dyadic $t=\ell2^{-m}$,
    \[
        [x]^{(p)}_{\T^n}(t)
        =\sum_{r=0}^{\ell-1}V_{n,m}(r)
        =\frac{\ell}{2^m}[x]^{(p)}_{\T^n}(1) + o_{n\to\infty}(1),
    \]
    where the $o_{n\to\infty}(1)$ term depends on $m$, but is uniform over $\ell\in\{0,\dots,2^m\}$, and converges to zero as $n\to\infty$. Letting $n\to\infty$ and using \eqref{eq:Vn-limit}, we obtain
    \begin{equation}\label{eq:Vn-dyadic-limit}
        \lim_{n\to\infty}[x]^{(p)}_{\T^n}(t)
        = t \lim_{n\to\infty}[x]^{(p)}_{\T^n}(1) = C_p t \lim_{n\to\infty} \xi^{(p)}_n,
        \qquad t\in\{0,2^{-m},\dots,1\}.
    \end{equation}

    \medskip
    \noindent\textbf{Step 4}: The identity \eqref{eq: xi implies p-th variation} holds for all $t \in [0, 1]$.

    Let $t\in[0,1]$. For each $m\in\N$, define the dyadic approximations
    \[
        t^-_m:=\frac{\lfloor 2^m t\rfloor}{2^m},
        \qquad
        t^+_m:=\frac{\lfloor 2^m t\rfloor+1}{2^m},
    \]
    so that $t^-_m\le t\le t^+_m$ and $t^-_m,t^+_m \rightarrow t$ as $m \rightarrow \infty$.
    Since $[x]^{(p)}_{\T^n}(\cdot)$ is non-decreasing, for every $n\ge 0$ we have
    \[
        [x]^{(p)}_{\T^n}(t^-_m)\le [x]^{(p)}_{\T^n}(t)\le [x]^{(p)}_{\T^n}(t^+_m).
    \]
    Let $L:=\lim_{n\to\infty}[x]^{(p)}_{\T^n}(1)$, which exists by Step~2. 
    Fix $m\in\N$. By Step~3, we have
    \[
        \lim_{n\to\infty}[x]^{(p)}_{\T^n}(t^-_m)=t^-_m\,L,
        \qquad
        \lim_{n\to\infty}[x]^{(p)}_{\T^n}(t^+_m)=t^+_m\,L.
    \]
    Taking $\liminf_{n\to\infty}$ and $\limsup_{n\to\infty}$ in the monotone sandwich yields
    \[
        t^-_m\,L
        \le \liminf_{n\to\infty}[x]^{(p)}_{\T^n}(t)
        \le \limsup_{n\to\infty}[x]^{(p)}_{\T^n}(t)
        \le t^+_m\,L.
    \]
    Letting $m\to\infty$ gives $t^-_m\uparrow t$ and $t^+_m\downarrow t$, hence
    \[
        \liminf_{n\to\infty}[x]^{(p)}_{\T^n}(t)
        =\limsup_{n\to\infty}[x]^{(p)}_{\T^n}(t)
        = tL.
    \]
    Combining this with \eqref{eq:Vn-limit} concludes \eqref{eq: xi implies p-th variation} for all $t\in[0,1]$.
\end{proof}

\medskip

\subsection{Proof of Proposition \ref{prop: translation}}   \label{subsec: proof translation}
\begin{proof}[Proof of Proposition \ref{prop: translation}]
    Fix any $t\in [0, 1]$. Then
    \begin{align*}
        [y]^{(p)}_{\T^n}(t)
        &=\sum_{i=0}^{2^n-1}\Big|g(t_{i+1}^n\wedge t)x(t_{i+1}^n\wedge t)-g(t_i^n\wedge t)x(t_i^n\wedge t)\Big|^p \\
        &=\sum_{i=0}^{2^n-1}\Big|
            \underbrace{g(t_i^n\wedge t)\big(x(t_{i+1}^n\wedge t)-x(t_i^n\wedge t)\big)}_{A_i^n}
            +\underbrace{x(t_{i+1}^n\wedge t)\big(g(t_{i+1}^n\wedge t)-g(t_i^n\wedge t)\big)}_{B_i^n}
          \Big|^p .
    \end{align*}
    By Minkowski's inequality,
    \[
        \Big|\Big(\sum_{i}|A_i^n+B_i^n|^p\Big)^{1/p}-\Big(\sum_{i}|A_i^n|^p\Big)^{1/p}\Big|
        \le \Big(\sum_{i}|B_i^n|^p\Big)^{1/p}.
    \]
    We claim that $\big(\sum_{i}|B_i^n|^p\big)^{1/p}\to0$ as $n\to\infty$:
    \[
        \sum_{i=0}^{2^n-1}|B_i^n|^p
        \le \|x\|_\infty^p \sum_{i=0}^{2^n-1}\big|g(t_{i+1}^n\wedge t)-g(t_i^n\wedge t)\big|^p
        = \|x\|_\infty^p\,[g]^{(p)}_{\T^n}(t) \longrightarrow \|x\|_\infty^p\,[g]^{(p)}_{\T}(t)=0.
    \]
    Next, note that
    \[
        \sum_{i=0}^{2^n-1}|A_i^n|^p
        =\sum_{i=0}^{2^n-1} \big|g(t_i^n\wedge t)\big|^p\,\big|x(t_{i+1}^n\wedge t)-x(t_i^n\wedge t)\big|^p
        \le \|g\|_\infty^p\,[x]^{(p)}_{\T^n}(t),
    \]
    and $\sup_n [x]^{(p)}_{\T^n}(t)<\infty$ because $x\in V^p_{\T}$. Hence $\big(\sum_i|A_i^n|^p\big)^{1/p}$ is uniformly bounded in $n$, and so is $\big(\sum_i|A_i^n+B_i^n|^p\big)^{1/p}$ by the triangle inequality. Therefore, on a bounded interval the map $u\mapsto u^p$ is Lipschitz, and the convergence of $\ell^p$-norms implies
    \[
    \sum_{i}|A_i^n+B_i^n|^p-\sum_{i}|A_i^n|^p \xrightarrow{n\to\infty} 0.
    \]
    Consequently, the two limits coincide:
    \begin{equation}\label{eq:reduce-translation}
        \lim_{n\to\infty} [y]^{(p)}_{\T^n}(t)
        = \lim_{n\to\infty} \sum_{i=0}^{2^n-1} \big|g(t_i^n\wedge t)\big|^p\,\big|x(t_{i+1}^n\wedge t)-x(t_i^n\wedge t)\big|^p.
    \end{equation}
    
    Now define the discrete measures (see Definition~1.1, Lemma~1.3 of \cite{perkowski2019})
    \[
        \mu_n^x:=\sum_{i=0}^{2^n-1}\delta_{t_i^n}\, \big|x(t_{i+1}^n)-x(t_i^n) \big|^p.
    \]
    Since $x\in V^p_{\T}$, the measures $\mu_n^x$ converge weakly to a finite Borel measure $\mu^x$ without atoms, and
    \[
        [x]^{(p)}_{\T}(t)=\mu^x([0,t]),\qquad t\in[0,1].
    \]
    Fix $t\in[0,1]$ and consider the restriction $\mu_n^{x,t}:=\mu_n^x|_{[0,t]}$ (and similarly $\mu^{x,t} := \mu^x|_{[0, t]}$). Since $\mu^x$ has no atom at $t$, we still have $\mu_n^{x,t}\Rightarrow \mu^{x,t}$. Because $u\mapsto |g(u)|^p$ is continuous and bounded on $[0,1]$, weak convergence yields
    \[
        \int_{[0,t]} |g(u)|^p\,\mu_n^x(du)
        =\int_{[0,t]} |g(u)|^p\,\mu_n^{x,t}(du)
        \xrightarrow{n\to\infty}
        \int_{[0,t]} |g(u)|^p\,\mu^{x,t}(du)
        =\int_{0}^{t}|g(u)|^p\,d[x]^{(p)}_{\T}(u).
    \]
    Finally, the sum in \eqref{eq:reduce-translation} differs from $\int_{[0,t]} |g(u)|^p\,\mu_n^x(du)$ by at most one term (the interval straddling $t$), which vanishes as $n\to\infty$ by continuity of $x$ and boundedness of $g$. Combining with \eqref{eq:reduce-translation} gives
    \[
        \lim_{n\to\infty}[y]^{(p)}_{\T^n}(t)=\int_{0}^{t}|g(u)|^p\,d[x]^{(p)}_{\T}(u),
    \]
    which proves that $y\in V^p_{\T}$ and establishes the identity \eqref{eq: p-th variation of y}.
\end{proof}

\medskip

\subsection{Proof of Theorem~\ref{thm:densityoffnsinVp}}    \label{subsec: proof density}

\begin{proof}[Proof of Theorem \ref{thm:densityoffnsinVp}]
    The proof consists of two steps.
    
    \textbf{Step 1}: Given $x\in V^p_{\T} \cap C^\alpha([0,1])$ and $y\in V^{p,\tau}_\T\cap C^\alpha([0,1])$, we shall construct a sequence $\{y_n\}$ with prescribed $p$-th variation $\tau$ that converges uniformly to $x$. We consider their Faber-Schauder coefficients $\{\theta^x_{m, k}\}_{m \ge 0, k \in I_m}, \{\theta^y_{m, k}\}_{m \ge 0, k \in I_m}$, and construct a sequence of functions
    \[
        y_n(t) := x(0) + \big(x(1)-x(0)\big)t + \sum_{m = 0}^{n-1} \sum_{k \in I_m} \theta^x_{m,k} e_{m,k}(t) + \sum_{m = n}^\infty \sum_{k \in I_m} \theta^y_{m,k} e_{m,k}(t), \qquad \forall \, t \in [0, 1].
    \]
    In other words, the difference $y_n - y$ is a piecewise linear with breakpoints in $\T^n$. Since adding a piecewise linear function does not change the $p$-th variation \cite[Lemma~3.1]{Misura2019}, we have
    \begin{equation}    \label{eq: same p-th variation}
        [y_n]^{(p)}_\T = [y]^{(p)}_\T = \tau, \qquad \text{for every } n \in \N,    
    \end{equation}
    and by construction $x(k2^{-n}) = y_n(k2^{-n})$ for all $k = 0,\dots, 2^n$. Hence, the triangle inequality yields
    \begin{equation}    \label{ineq: tri}
        |x(t) - y_n(t)| \leq |x(t) - x(t_n)| + |y_n(t_n) - y_n(t)|, \qquad \forall \, t\in[0, 1],
    \end{equation}
    where $t_n$ is the nearest dyadic point before $t$ at level $n$, i.e., $t_n := \lfloor t 2^n \rfloor 2^{-n}$.
    
    On the other hand, since both $x,y$ are $\alpha$-H\"older continuous, Lemma \ref{lem: Ciesielski} implies 
    \[
        \sup_{m,k} \big( 2^{m(\alpha - \frac12)} |\theta^x_{m,k}| \big) < \infty, \qquad \sup_{m, k} \big( 2^{m(\alpha - \frac12)} |\theta^y_{m,k}| \big) < \infty.
    \]
    Another application of Lemma \ref{lem: Ciesielski} to each $y_n$ shows that every $y_n$ is also $\alpha$-H\"older continuous and $C_y := \sup_{n \in \N} \Vert y_n \Vert_{C^{\alpha}} < \infty$ since the Faber-Schauder coefficients of $y_n$ are uniformly bounded by those of $x$ and $y$. The inequality \eqref{ineq: tri} can be further bounded as 
    \[
        |x(t) - y_n(t)| \leq |x(t) - x(t_n)| + |y_n(t_n) - y_n(t)| \leq \big( \Vert x \Vert_{C^{\alpha}}+C_y \big) |t-t_n|^{\alpha} \le \big( \Vert x \Vert_{C^{\alpha}}+C_y \big) 2^{-n\alpha},
    \]
    for any $t \in [0, 1]$, thus
    \begin{equation}    \label{ineq: supremum norm bound}
        \Vert x - y_n \Vert_{\infty} \le \big( \Vert x \Vert_{C^{\alpha}}+C_y \big) 2^{-n\alpha}.
    \end{equation}
    
    \medskip

    \noindent \textbf{Step 2}: For any $z \in C([0, 1])$, there exists a sequence of Bernstein polynomials $\{z^n\}_{n \in \N}$ such that $\Vert z - z^n \Vert_{\infty} \to 0$ as $n \to \infty$ (by the Stone-Weierstrass theorem). Note that as a polynomial, each $z^n \in C^1([0, 1]) \cap V^p_{\T}$ for any $p > 1$, since $[z^n]^{(p)}_{\T} \equiv 0$. Suppose that we are given $y \in V^{p, \tau}_{\T} \cap C^{\alpha}([0, 1])$. Fix $z^n$ for some $n \in \N$ and apply \textbf{Step 1} argument to $z^n$ (as $x$) and $y$ so that we can construct a sequence of functions $\{y^n_m\}_{m \in \N}$ satisfying
    \begin{equation*}
        [y^n_m]^{(p)}_{\T} = [y]^{(p)}_{\T} \qquad \text{and} \qquad \Vert z^n-y^n_m \Vert_{\infty} \le \big( \Vert z^n \Vert_{C^{\alpha}}+C_y \big) 2^{-m\alpha}, \qquad \text{for all } m \in \N, 
    \end{equation*}
    by virtue of \eqref{eq: same p-th variation} and \eqref{ineq: supremum norm bound}. We now consider the diagonal sequence $\{y^n_n\}_{n \in \N}$, then
    \[
        \Vert z - y^n_n \Vert_{\infty} \le \Vert z - z^n \Vert_{\infty} + \Vert z^n - y^n_n \Vert_{\infty} \le \Vert z - z^n \Vert_{\infty} + \Vert z^n \Vert_{C^{\alpha}} 2^{-n\alpha} + C_y 2^{-n\alpha}.
    \]
    From Lemma~\ref{lem: Bernstein poly} below, the right-hand side converges to zero as $n \to \infty$. Therefore, for any $\epsilon > 0$, choosing $n$ sufficiently large and setting $\tilde{z} := y^n_n$ yield $\|z-\tilde{z}\|_{\infty} < \epsilon$.
\end{proof}

\medskip

\begin{lemma}   \label{lem: Bernstein poly}
    For $x \in C([0,1])$, define its $n$-th Bernstein polynomial by
    \[
        x_n(t) := \sum_{k=0}^n x\Big(\frac{k}{n}\Big)\binom{n}{k} t^k(1-t)^{n-k}, \qquad t \in [0,1],
    \]
    so that $\|x_n\|_{\infty} \le \|x\|_{\infty}$ and $\|x_n - x\|_{\infty} \to 0$ as $n \to \infty$. Then, for $\alpha \in (0,1)$, we have the following bound for the $\alpha$-H\"older norm of $x_n$:
    \[
        \|x_n\|_{C^{\alpha}} \le (2n+1)\|x\|_{\infty}, \qquad \forall \, n \in \N.
    \]
\end{lemma}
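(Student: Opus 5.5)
We need to bound $\|x_n\|_{C^\alpha} = |x_n(0)| + \sup_{t\ne s}|x_n(t)-x_n(s)|/|t-s|^\alpha$. The plan is to handle the two pieces separately: the constant term is trivial, since $x_n(0)=x(0)$ and so $|x_n(0)|\le\|x\|_\infty$. For the H\"older seminorm, interpolate between the trivial bound $|x_n(t)-x_n(s)|\le 2\|x_n\|_\infty\le 2\|x\|_\infty$ and a Lipschitz bound obtained from the classical derivative formula for Bernstein polynomials.

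\textbf{Step 1 (derivative bound).} Use the standard identity
\[
    x_n'(t) = n\sum_{k=0}^{n-1}\Big(x\big(\tfrac{k+1}{n}\big)-x\big(\tfrac kn\big)\Big)\binom{n-1}{k}t^k(1-t)^{n-1-k}.
\]
It follows by differentiating each basis polynomial $\binom nk t^k(1-t)^{n-k}$ and reindexing. Since the weights $\binom{n-1}{k}t^k(1-t)^{n-1-k}$ are nonnegative and sum to one, this gives $\|x_n'\|_\infty\le 2n\|x\|_\infty$. Hence, by the mean value theorem, $|x_n(t)-x_n(s)|\le 2n\|x\|_\infty|t-s|$.

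\textbf{Step 2 (interpolation).} For $t\neq s$, write
\[
    |x_n(t)-x_n(s)| \le \min\{2\|x\|_\infty,\ 2n\|x\|_\infty|t-s|\}.
\]
Split into two cases.
\begin{itemize}
    \item If $|t-s|\ge 1/n$, then $|x_n(t)-x_n(s)|/|t-s|^\alpha \le 2\|x\|_\infty n^\alpha$.
    \item If $|t-s|<1/n$, then $|x_n(t)-x_n(s)|/|t-s|^\alpha \le 2n\|x\|_\infty|t-s|^{1-\alpha}\le 2n^{\alpha}\|x\|_\infty$.
\end{itemize}
In either case the seminorm is at most $2n^\alpha\|x\|_\infty\le 2n\|x\|_\infty$, because $\alpha<1$ and $n\ge1$. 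Adding the $|x_n(0)|$ term gives $\|x_n\|_{C^\alpha}\le (2n+1)\|x\|_\infty$. A cruder route also suffices: since $|t-s|\le 1$, the Lipschitz bound alone gives $|t-s|\le|t-s|^\alpha$, which directly yields the seminorm bound $2n\|x\|_\infty$ without the case split.

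\textbf{Main obstacle.} The only nontrivial ingredient is the derivative identity in Step 1, and it is a routine reindexing calculation. I also need to recall the stated properties $\|x_n\|_\infty\le\|x\|_\infty$ (positivity and partition of unity) and uniform convergence (Bernstein's theorem); these are classical and assumed as in the statement. Finally, note that in the application to Theorem~\ref{thm:densityoffnsinVp} the Bernstein polynomials of $z$ satisfy $\|z^n\|_{C^\alpha}2^{-n\alpha}\le(2n+1)\|z\|_\infty 2^{-n\alpha}\to0$, which is exactly what the lemma is needed for.
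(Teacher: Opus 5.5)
Your proposal is correct and follows essentially the paper's proof. The paper uses the same Bernstein derivative identity to get $\|x_n'\|_\infty \le 2n\|x\|_\infty$, and then takes exactly your ``cruder route'' $|x_n(t)-x_n(s)|/|t-s|^\alpha \le \|x_n'\|_\infty|t-s|^{1-\alpha}\le 2n\|x\|_\infty$, bounding the constant term by $\|x_n\|_\infty\le\|x\|_\infty$. Your case split, giving the sharper seminorm bound $2n^\alpha\|x\|_\infty$, is a harmless refinement that neither the lemma nor its use in Theorem~\ref{thm:densityoffnsinVp} requires.
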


\begin{proof} [Proof of Lemma \ref{lem: Bernstein poly}]
    It is easy to compute the derivative of the Bernstein polynomial
    \[
        x_n'(t) = n \sum_{k=0}^{n-1} \bigg(x\Big(\frac{k+1}{n}\Big) - x\Big(\frac{k}{n}\Big)\bigg) \binom{n-1}{k} t^k (1-t)^{n-1-k}, \qquad t \in [0, 1].
    \]
    Set
    \[
        b_{n-1,k}(t) := \binom{n-1}{k} t^k (1-t)^{n-1-k},
    \]
    then \(b_{n-1,k}(t) \ge 0\) and 
    \(\sum_{k=0}^{n-1} b_{n-1,k}(t)=1\) for all \(t\in[0,1]\), so
    \[
        \big|x_n'(t)\big|
        \le n \sum_{k=0}^{n-1} \bigg|x\Big(\frac{k+1}{n}\Big) - x\Big(\frac{k}{n}\Big)\bigg| b_{n-1,k}(t)
        \le n \max_{0 \le k \le n-1} \bigg|x\Big(\frac{k+1}{n}\Big) - x\Big(\frac{k}{n}\Big)\bigg| \le 2n\|x\|_{\infty}.
    \]
    Therefore, $\|x_n'\|_{\infty} \le 2n\|x\|_{\infty}$. Since we now have
    \[
        \frac{|x_n(t)-x_n(s)|}{|t-s|^{\alpha}} \le \|x_n'\|_{\infty}|t-s|^{1-\alpha} \le \|x_n'\|_{\infty} \le 2n\|x\|_{\infty}, \qquad \forall \, s,t \in [0,1], ~ s\neq t,
    \]
    it follows that
    \[
        \|x_n\|_{C^{\alpha}} \le \|x_n\|_{\infty} + 2n\|x\|_{\infty} \le (2n+1)\|x\|_{\infty}.
    \]
\end{proof}

\bigskip

\subsection{Proof of Theorem~\ref{thm:q-adic-linear-pvar}}
\begin{proof}[Proof of Theorem \ref{thm:q-adic-linear-pvar}]
The proof follows and generalizes the proof of Theorem~\ref{thm: xi implies p-th variation}; we divide the proof into four steps.

\medskip
    
\noindent\textbf{Step 1: Uniform convergence of the series.}
Since $(\xi_m^{(p,q)})$ converges, the sequence $(y_m)_{m\ge0}$ is bounded, so we define
\begin{equation}    \label{def: upper bound for y_m}
    Y := \sup_m |y_m| < \infty.
    \end{equation}
    Then
    \[
        c_m=q^{m(\frac{1}{2}-\frac{1}{p})}y_m\le Y q^{m(\frac{1}{2}-\frac{1}{p})}.
    \]
    
    For each fixed $m$ and $t\in[0,1]$, there is at most one index $k\in\{0,\dots,q^m-1\}$ such that some $e_{m,k,\ell}(t)$ is nonzero, namely the unique $k$ such that $t\in I_{m,k}$. Moreover, for every $m,k,\ell$,
    \[
        \|e_{m,k,\ell}\|_\infty \le q^{-\frac{m}{2}}.
    \]
    Hence
    \[
        \sup_{t\in[0,1]} \bigg| \sum_{k=0}^{q^m-1}\sum_{\ell=1}^{q-1}\theta_{m,k,\ell}e_{m,k,\ell}(t) \bigg|
        \le \sum_{\ell=1}^{q-1}|a_\ell|\,c_m\,q^{-\frac m2} \le Y\Big(\sum_{\ell=1}^{q-1}|a_\ell|\Big) q^{-\frac mp}.
    \]
    Since $\sum_{m\ge0}q^{-\frac mp}<\infty$, the Weierstrass $M$-test implies that \eqref{eq:q-adic-series} converges uniformly on $[0,1]$ to a continuous function $x$.
    
    \medskip
    
    \noindent\textbf{Step 2: A digit representation of the normalized increments.}
    
    For $n\ge1$ and $k=0,\dots,q^n-1$, write the base-$q$ expansion
    \[
        k=d_1(k)+d_2(k)q+\cdots+d_n(k)q^{n-1}, \qquad d_j(k) \in \{0,\dots,q-1\}.
    \]
    Let
    \[
        \Delta_k^n x:=x\Big(\frac{k+1}{q^n}\Big)-x\Big(\frac{k}{q^n}\Big).
    \]
    
    Fix $n\ge1$, $k\in\{0,\dots,q^n-1\}$, and $m\in\{0,\dots,n-1\}$. The interval
    \[
        I_{n, k} = \Big[\frac{k}{q^n},\frac{k+1}{q^n}\Big)
    \]
    is contained in a unique parent interval $I_{m,\kappa}$ for some $\kappa$, and inside that parent it lies in the child indexed by $d_{n-m}(k)\in\{0,\dots,q-1\}$. Indeed, if $\kappa=\lfloor k/q^{\,n-m}\rfloor$, then
    \[
        \Big\lfloor \frac{k}{q^{\,n-(m+1)}}\Big\rfloor = q\Big\lfloor \frac{k}{q^{\,n-m}}\Big\rfloor + d_{n-m}(k) = q\kappa+d_{n-m}(k),
    \]
    so the unique child of $I_{m,\kappa}$ containing $I_{n,k}$ is $I_{m+1,\,q\kappa+d_{n-m}(k)}$.
    For the convenience of readers, we provide a toy example on how this digit expansion works in Appendix~\ref{App: toy-ex}.

    By the definition of $\psi_{m,\kappa,\ell}$ in \eqref{def: q-adic Haar}, we have on this level-$n$ interval
    \[
        \psi_{m,\kappa,\ell}(t) = q^{\frac m2}\gamma_{\ell,d_{n-m}(k)},
        \qquad t\in \Big[\frac{k}{q^n},\frac{k+1}{q^n}\Big).
    \]
    Hence
    \[
        e_{m,\kappa,\ell}\Big(\frac{k+1}{q^n}\Big)-e_{m,\kappa,\ell}\Big(\frac{k}{q^n}\Big)
        = q^{-n}q^{\frac m2}\gamma_{\ell,d_{n-m}(k)}.
    \]
    Multiplying by $\theta_{m,\kappa,\ell}=c_m a_\ell$, summing over $\ell$, and recalling the definition \eqref{def: eta_d}, the level-$m$ contribution to $\Delta_k^n x$ is
    \[
        q^{-n}q^{\frac m2} c_m \eta_{d_{n-m}(k)}(a),
    \]
    where the notation $\eta_d$ is defined in \eqref{def: eta_d}.
    Therefore
    \[
        \Delta_k^n x = \sum_{m=0}^{n-1} q^{-n}q^{\frac m2} c_m \eta_{d_{n-m}(k)}(a).
    \]
    Reindexing with $j:=n-m$, and replacing $c_m$ with $y_m$ by \eqref{def: y_m q-adic}, we obtain
    \begin{equation}\label{eq:q-adic-increment}
        q^{\frac np}\Delta_k^n x
        = \sum_{j=1}^n q^{-j(1-\frac 1p)} y_{n-j}\eta_{d_j(k)}(a)
        = \sum_{j=1}^n \rho_q^j y_{n-j}\eta_{d_j(k)}(a).
    \end{equation}
    
    \medskip
    
    \noindent\textbf{Step 3: Linear $p$-th variation at $t=1$ and at $q$-adic rational points.}
    
    Let $(D_j)_{j\ge1}$ be i.i.d.\ random variables uniformly distributed on $\{0,\dots,q-1\}$. Since the digit vectors
    \[
        \big(d_1(k),\dots,d_n(k)\big),\qquad k=0,\dots,q^n-1,
    \]
    exhaust all elements of $\{0,\dots,q-1\}^n$ exactly once, \eqref{eq:q-adic-increment} yields

    \begin{equation}   \label{eq:q-adic-Vn-expectation}
        [x]^{(p)}_{\T_q^n}(1)
        = \sum_{k=0}^{q^n-1} |\Delta_k^n x|^p
        = q^{-n}\sum_{k=0}^{q^n-1} \bigg| \sum_{j=1}^n \rho_q^j y_{n-j}\eta_{d_j(k)}(a) \bigg|^p
        = \mathbb E\Bigg[ \bigg| \sum_{j=1}^n \rho_q^j y_{n-j}\eta_{D_j}(a) \bigg|^p \Bigg].
    \end{equation}
    
    Since $(\xi_m^{(p,q)})$ converges, there exists $y\ge0$ such that $y_m\to y$. Set
    \[
        Z:=\sum_{j=1}^{\infty}\rho_q^j\eta_{D_j}(a).
    \]
    Because $\rho_q\in(0,1)$ from \eqref{def: rho_q}, and the random variables $\eta_{D_j}(a)$ take values in the finite set $\{\eta_0(a),\dots,\eta_{q-1}(a)\}$, the series for $Z$ converges almost surely and in $L^p$. Moreover, we claim the following $L^p$-convergence:
    \begin{equation}    \label{L^p convergence}
        \sum_{j=1}^n \rho_q^j y_{n-j}\eta_{D_j}(a)\xrightarrow[n\to\infty]{L^p} yZ.
    \end{equation}
    Since $y_m\to y$, recall $Y:=\sup_m |y_m|<\infty$ from \eqref{def: upper bound for y_m}. Also, since $\{\eta_0(a),\dots,\eta_{q-1}(a)\}$ is finite, we may set
    \begin{equation}    \label{def: M_a}
        M_a:=\max_{0\le d\le q-1} |\eta_d(a)|<\infty.
    \end{equation}
    Then, for every $n$,
    \[
        \Big|\sum_{j=1}^n \rho_q^j y_{n-j}\eta_{D_j}(a)\Big| \le YM_a \sum_{j=1}^\infty \rho_q^j <\infty \qquad \text{a.s.}
    \]
    Moreover, for each fixed $j\ge1$, we have $y_{n-j}\to y$ as $n\to\infty$. Hence,
    \[
        \sum_{j=1}^n \rho_q^j y_{n-j}\eta_{D_j}(a) \xlongrightarrow{n \to \infty} \sum_{j=1}^\infty \rho_q^j y\,\eta_{D_j}(a)=yZ \qquad \text{a.s.},
    \]
    and the above uniform bound allows us to apply dominated convergence in $L^p$, which proves \eqref{L^p convergence}.
    
    Therefore, from \eqref{eq:q-adic-Vn-expectation},
    \begin{equation}\label{eq:q-adic-Vn-limit}
        [x]^{(p)}_{\T_q^n}(1)\xrightarrow{n\to\infty}
        \mathbb E[|yZ|^p] 
        = \Big(\lim_{m\to\infty}\xi_m^{(p,q)}\Big)\, \mathbb E[|Z|^p]
        = C_{p,q,a}\,\lim_{m\to\infty}\xi_m^{(p,q)}.
    \end{equation}
    Here, since $a\neq 0$, the vector $(\eta_0(a),\dots,\eta_{q-1}(a))$ is not identically zero, hence $P(|Z|>0)>0$, and therefore
    \[
        C_{p,q,a} := \mathbb E[|Z|^p] > 0.
    \]
    
    We next extend this to $q$-adic rational points. Fix $m\ge1$ and
    \[
        t=\frac{\ell}{q^m},\qquad \ell\in\{0,\dots,q^m\}.
    \]
    For each $n\ge m$, decompose $\{0,\dots,q^n-1\}$ into $q^m$ consecutive blocks of length $q^{n-m}$:
    \[
        B_{n,m}(r) := \{rq^{n-m},\dots,(r+1)q^{n-m}-1\},
        \qquad r=0, \dots ,q^m-1.
    \]
    Define
    \[
        V_{n,m}(r) := \sum_{k\in B_{n,m}(r)} |\Delta_k^n x|^p.
    \]
    Then
    \[
        [x]^{(p)}_{\T_q^n}(1) = \sum_{r=0}^{q^m-1}V_{n,m}(r), \qquad
        [x]^{(p)}_{\T_q^n}(t) = \sum_{r=0}^{\ell-1}V_{n,m}(r).
    \]
    
    We claim that, for each fixed $m$,
    \begin{equation}    \label{eq:q-adic-block-equipartition}
        \max_{0\le r\le q^m-1} \Big| V_{n,m}(r)-q^{-m}[x]^{(p)}_{\T_q^n}(1) \Big| \xrightarrow{n\to\infty} 0.
    \end{equation}
    Indeed, for $k\in B_{n,m}(r)$, split \eqref{eq:q-adic-increment} into
    \[
        U_{n,m}(k):=\sum_{j=1}^{n-m}\rho_q^j y_{n-j}\eta_{d_j(k)}(a),
        \qquad
        T_{n,m}(r):=\sum_{j=n-m+1}^{n}\rho_q^j y_{n-j}\eta_{d_j(rq^{n-m})}(a).
    \]
    Then for $k\in B_{n,m}(r)$,
    \[
        q^{n/p}\Delta_k^n x=U_{n,m}(k)+T_{n,m}(r),
    \]
    because the digits $d_j(k)$ with $j>n-m$ are constant on the block. Recalling \eqref{def: upper bound for y_m} and \eqref{def: M_a},
    \[
        |U_{n,m}(k)| \le YM_a\sum_{j=1}^{\infty}\rho_q^j =: M,
        \qquad |T_{n,m}(r)| \le YM_a\sum_{j=n-m+1}^{\infty}\rho_q^j \le M,
    \]
    uniformly in $n,m,k$, hence we have $U_{n,m}(k) + T_{n,m}(r) \in [-2M,2M]$. Since $p>1$, the function $u\mapsto |u|^p$ is Lipschitz on the compact interval $[-2M,2M]$, i.e., there exists a constant $L=L(p,M)>0$ such that
    \[
        \big||u+v|^p-|u|^p\big|\le L|v|, \qquad |u|\le M,\ |v|\le M.
    \]
    Therefore,
    \begin{equation*}
        \Big|V_{n,m}(r)-q^{-n}\sum_{k\in B_{n,m}(r)}|U_{n,m}(k)|^p\Big|
        \le L q^{-n}\sum_{k\in B_{n,m}(r)}|T_{n,m}(r)|
        \le L q^{-m}\max_{0\le r\le q^m-1}|T_{n,m}(r)|.
    \end{equation*}
    
    On the other hand, as $k$ ranges over $B_{n,m}(r)$, the vectors $\big(d_1(k),\dots,d_{n-m}(k)\big)$ exhaust $\{0,\dots,q-1\}^{n-m}$ exactly once, independently of $r$. Hence,
    \[
        A_{n,m}:= q^{-n}\sum_{k\in B_{n,m}(r)}|U_{n,m}(k)|^p
    \]
    is independent of $r$. Since the tail sum $T_{n,m}(r)$ tends to $0$ uniformly in $r$ as $n \to \infty$, we have
    \[
        \max_{0\le r\le q^m-1}|V_{n,m}(r)-A_{n,m}|
        \xrightarrow{n\to\infty} 0.
    \]
    Since the convergence above is uniform in $r$ and the number of blocks is $q^m$, we obtain
    \[
        [x]^{(p)}_{\T_q^n}(1) = \sum_{r=0}^{q^m-1}V_{n,m}(r) = q^m A_{n,m}+o_{n\to\infty}(1).
    \]
    Equivalently,
    \[
    A_{n,m}=q^{-m}[x]^{(p)}_{\T_q^n}(1)+o_{n\to\infty}(1).
    \]
    
    Consequently, for $t=\ell q^{-m}$,
    \[
        [x]^{(p)}_{\T_q^n}(t)
        =
        \sum_{r=0}^{\ell-1}V_{n,m}(r)
        =
        \frac{\ell}{q^m}[x]^{(p)}_{\T_q^n}(1)+o_{n\to\infty}(1).
    \]
    Letting $n\to\infty$ and using \eqref{eq:q-adic-Vn-limit}, we obtain
    \begin{equation}\label{eq:q-adic-rational-limit}
        \lim_{n\to\infty}[x]^{(p)}_{\T_q^n}(t)
        =
        t\,C_{p,q,a}\,\lim_{m\to\infty}\xi_m^{(p,q)}
        \qquad\text{for every } t \in \bigcup_{m\ge0}\T_q^m.
    \end{equation}
    
    \medskip
    
    \noindent\textbf{Step 4: Extension to all $t\in[0,1]$.}
    
    Let $t\in[0,1]$. For each $m\ge1$, define
    \[
        t_m^-:=\frac{\lfloor q^m t\rfloor}{q^m},
        \qquad
        t_m^+:=\frac{\lfloor q^m t\rfloor+1}{q^m}.
    \]
    Then $t_m^-\le t\le t_m^+$ and $t_m^+,t_m^- \rightarrow t$. Since $[x]^{(p)}_{\T_q^n}(\cdot)$ is non-decreasing for every $n$, we have
    \[
        [x]^{(p)}_{\T_q^n}(t_m^-)
        \le
        [x]^{(p)}_{\T_q^n}(t)
        \le
        [x]^{(p)}_{\T_q^n}(t_m^+).
    \]
    By \eqref{eq:q-adic-rational-limit},
    \[
        \lim_{n\to\infty}[x]^{(p)}_{\T_q^n}(t_m^\pm)
        =
        t_m^\pm\,C_{p,q,a}\,\lim_{r\to\infty}\xi_r^{(p,q)}.
    \]
    Taking $\liminf$ and $\limsup$ in $n$ and then letting $m\to\infty$, we obtain
    \[
        \lim_{n\to\infty}[x]^{(p)}_{\T_q^n}(t)
        =
        t\,C_{p,q,a}\,\lim_{r\to\infty}\xi_r^{(p,q)}.
    \]
    This proves the theorem.
\end{proof}

\bigskip

\appendix
\section{An example on Step 2 of the proof of Theorem~\ref{thm:q-adic-linear-pvar}}\label{App: toy-ex}

To provide a clearer explanation of the sentence --- ``The interval $I_{n, k}$ is contained in a unique parent interval $I_{m,\kappa}$ for some $\kappa$, and inside that parent it lies in the child indexed by $d_{n-m}(k)\in\{0,\dots,q-1\}$." --- consider the following example: 

Take $q=3$, $n=5$, and $k=71$. Then ternary expansion of $71$ is \(02122_{(3)}\), i.e.,
 \[
    71 = 2 + 2\cdot 3 + 1\cdot 3^2 + 2\cdot 3^3 + 0\cdot 3^4,
 \]
so following the notation in Step 2 of the proof, 
 \[
    d_1(k)=2, \qquad d_2(k)=2, \qquad d_3(k)=1, \qquad d_4(k)=2, \qquad d_5(k)=0,
 \]
 The level-5 interval is
 \[
    I_{5,71} =\Big[\frac{71}{3^5},\frac{72}{3^5}\Big) =\Big[\frac{71}{243},\frac{72}{243}\Big).
 \]
One now has the following: 

\medskip

If $m=4$, then \(I_{5,71}\subset I_{4,23} = [\frac{23}{81}, \frac{24}{81}) = [\frac{69}{243}, \frac{72}{243}) \). This level-4 parent has 3 children, $I_{5, 69}, I_{5, 70}, I_{5, 71}$, and $I_{5, 71}$ is the child with index \(2=d_{5-4}(k)=d_1(k)\).

\medskip
  
If $m=3$, then \(I_{5,71}\subset I_{3,7} = [\frac{7}{27}, \frac{8}{27}) = [\frac{63}{243}, \frac{72}{243})\). This level-3 parent has 3 children, $I_{4, 21}, I_{4, 22}, I_{4, 23}$, and $I_{5,71} \subset I_{4, 23}$ with index \(2=d_{5-3}(k)=d_2(k)\).

\medskip
     
If $m=2$, then \(I_{5,71}\subset I_{2,2} = [\frac{2}{9}, \frac{3}{9}) = [\frac{54}{243}, \frac{81}{243})\). This level-2 parent has 3 children, $I_{3, 6}, I_{3, 7}, I_{3, 8}$, and $I_{5,71} \subset I_{3, 7}$ with index \(1=d_{5-2}(k)=d_3(k)\).

\medskip

Similar relationships hold for $m = 1, 0$. This illustrates why, in general, the child index at level $m$ is \(d_{n-m}(k)\). 

\bigskip

\bigskip

\noindent \textbf{Funding}

\medskip

\noindent Donghan Kim was supported by the National Research Foundation of Korea (NRF) grant funded by the Korea government (MSIT) RS-2025-00513609.

\bigskip

\renewcommand{\bibname}{References}
\bibliography{pathwise1}
\bibliographystyle{apalike}

\end{document}